\newtheorem{theorem}{Theorem}
\newtheorem*{theorem*}{Theorem}
\newtheorem*{theoremA}{Theorem A}
\newtheorem{lemma}{Lemma}[section]
\newtheorem*{lemma*}{Lemma}
\newtheorem{corollary}{Corollary}[section]
\newtheorem*{corollary*}{Corollary}
\newtheorem{proposition}{Proposition}[section]
\newtheorem*{proposition*}{Proposition}
\theoremstyle{definition}
\newtheorem{definition}{Definition}
\theoremstyle{remark}
\newtheorem{remark}{Remark}[section]
\DeclareMathOperator{\Ad}{Ad}
\DeclareMathOperator{\Sl2}{SL_{2}}
\DeclareMathOperator{\sgn}{sgn}
\DeclareMathOperator{\PSl2}{PSL_{2}}
\begin{document}

\author{Dana Abou Ali}
\address{Mathematisches Institut, Endenicher Allee 60, 53115 Bonn, Germany}
\email{danaaabouali@gmail.com}

\thanks{I would like to express my deepest gratitude to Professor Valentin Blomer for introducing me to this topic and for his invaluable guidance. This paper was supported in part by Germany's Excellence Strategy grant EXC-2047/1 - 390685813}

\keywords{Geodesic restriction problem, Maass forms}

\title{On the $L^{2}$-restriction norm problem for closed geodesics on the modular surface}

\begin{abstract}   Let $f$ be a Petersson normalized Hecke-Maass cusp form with spectral parameter $t\geq 2$ and let $\mathcal{C}_{D}$ be the union of closed geodesics in $\Sl2(\mathbb{Z})\setminus \mathbb{H}$ associated to a fundamental discriminant $D>0$. Following a suggestion by Sarnak in his letter to Reznikov, we express the restriction norm $||f|_{\mathcal{C}_{D}}||_{2}^{2}$ as a weighted sum of central values of L-functions using Waldspurger's formula. This allows us to get an unconditional improvement over the current bounds. 
 \end{abstract}

\subjclass[2020]{Primary  11F12, 11M41; Secondary 53C22}

\setcounter{tocdepth}{2}  \maketitle

\maketitle

\section{Introduction}
\noindent This paper deals with a geodesic restriction problem: given a Riemann surface $(M,g)$, a normalized function $f \colon M\to \mathbb{C}$ and a closed geodesic $\ell \subseteq M$, we want to estimate $||f|_{\ell}||_{2}$ with respect to some parameter. The goal of this problem is, informally, to understand the extent to which $f$ can concentrate on small subsets of $M$ ( \cite{Marshall_2016}, \cite{Restriction-Laplace-Beltrami-Burq}). The following standard result was proved by Reznikov \cite{Reznikov-geodesic-restriction} and generalized by Burq, G\'erard and Tzvektov \cite{Restriction-Laplace-Beltrami-Burq}
\begin{theorem*}
If $M$ is a compact smooth Riemannian surface (without boundary), $\Delta$ is the Laplace operator associated to $g$ and $\ell \subseteq M$ a smooth curve then
\[
||f|_{\ell} ||_{2} \ll (1+r)^{\frac{1}{4}}
\]
for any eigenvector $f$ of $\Delta$ with eigenvalue $-r^{2}$, $r\geq 0$. 
\end{theorem*}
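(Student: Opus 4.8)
The plan is to prove this by the now-standard smoothed spectral projector method of Burq--G\'erard--Tzvetkov \cite{Restriction-Laplace-Beltrami-Burq}; Reznikov's original argument \cite{Reznikov-geodesic-restriction}, special to hyperbolic surfaces, instead uses invariant triple products and representation theory, but the parametrix approach applies verbatim to an arbitrary compact surface. Fix a real even Schwartz function $\chi$ with $\chi(0)=1$ whose Fourier transform $\hat\chi$ is supported in $[-\epsilon,\epsilon]$, where $\epsilon>0$ is a fixed constant with $\epsilon<\tfrac12\operatorname{inj}(M)$ (possible since $\operatorname{inj}(M)>0$ by compactness). If $\Delta f=-r^{2}f$ with $r\ge 0$, then $f$ is an eigenfunction of $\sqrt{-\Delta}$ with eigenvalue $r$, so by the functional calculus $P_{r}f:=\chi\!\bigl(r-\sqrt{-\Delta}\,\bigr)f=\chi(0)f=f$. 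Since $\chi$ is Schwartz, $P_{r}$ is smoothing, hence $RP_{r}$ is a bounded operator, where $R\colon L^{2}(M)\to L^{2}(\ell)$ denotes restriction; consequently
\[
\|f|_{\ell}\|_{2}=\|RP_{r}f\|_{2}\le\|RP_{r}\|_{L^{2}(M)\to L^{2}(\ell)}\,\|f\|_{L^{2}(M)},
\]
and it is enough to show $\|RP_{r}\|\ll(1+r)^{1/4}$. By the $TT^{*}$ identity, $\|RP_{r}\|^{2}=\bigl\|(RP_{r})(RP_{r})^{*}\bigr\|_{L^{2}(\ell)\to L^{2}(\ell)}$, and $(RP_{r})(RP_{r})^{*}$ is the integral operator on $L^{2}(\ell)$ whose kernel is the restriction to $\ell\times\ell$ of the Schwartz kernel $K_{r}(x,y)$ of $P_{r}^{2}=\psi\!\bigl(r-\sqrt{-\Delta}\,\bigr)$, with $\psi:=\chi^{2}$ even, nonnegative, and $\hat\psi$ supported in $[-2\epsilon,2\epsilon]\subset\bigl(-\operatorname{inj}(M),\operatorname{inj}(M)\bigr)$.

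The core of the argument is a pointwise estimate for $K_{r}$. From
\[
\psi\!\bigl(r-\sqrt{-\Delta}\,\bigr)=\frac{1}{2\pi}\int_{-2\epsilon}^{2\epsilon}\hat\psi(\tau)\,e^{ir\tau}\,e^{-i\tau\sqrt{-\Delta}}\,d\tau
\]
and the parametrix for the half-wave propagator $e^{-i\tau\sqrt{-\Delta}}$ (available for $|\tau|\le 2\epsilon<\operatorname{inj}(M)$), one obtains that $K_{r}(x,y)$ equals, up to an error that is $O_{N}(r^{-N})$ uniformly for every $N$, an oscillatory integral supported in $\{d_{g}(x,y)\le 2\epsilon\}$ whose phase is governed by $d_{g}(x,y)$. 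The method of stationary phase (in $\tau$ and in the cosphere variable) then gives the Bessel-type bound characteristic of a unit-width spectral projector on a surface: for $r\ge 1$,
\[
|K_{r}(x,y)|\ll r\,\bigl(1+r\,d_{g}(x,y)\bigr)^{-1/2}\ \ \text{if }d_{g}(x,y)\le 2\epsilon,\qquad |K_{r}(x,y)|\ll_{N}r^{-N}\ \ \text{otherwise}.
\]
Because $K_{r}$ is Hermitian, Schur's test gives $\|(RP_{r})(RP_{r})^{*}\|\le\sup_{x\in\ell}\int_{\ell}|K_{r}(x,y)|\,ds(y)$, where $ds$ is arc length on the compact curve $\ell$. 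For fixed $x$, the set $\{y\in\ell:d_{g}(x,y)\le 2\epsilon\}$ consists of boundedly many arcs, on each of which --- parametrized by arc length --- $d_{g}(x,y)\gtrsim|s-s_{x}|$ away from its point $s_{x}$ nearest $x$; therefore
\[
\int_{\ell}|K_{r}(x,y)|\,ds(y)\ll\int_{0}^{c}r\,(1+ru)^{-1/2}\,du+|\ell|\,r^{-N}=2\bigl[(1+rc)^{1/2}-1\bigr]+|\ell|\,r^{-N}\ll(1+r)^{1/2}.
\]
Hence $\|RP_{r}\|^{2}\ll(1+r)^{1/2}$, so $\|f|_{\ell}\|_{2}\ll(1+r)^{1/4}\|f\|_{L^{2}(M)}$ for $r\ge 1$; the remaining range $0\le r<1$ is immediate from the Sobolev trace inequality, since such an $f$ lies in a bounded subset of $H^{2}(M)$.

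I expect the main obstacle to be precisely the pointwise kernel bound: it rests on the parametrix for the wave equation on $(M,g)$ up to a time below the injectivity radius together with a careful stationary-phase analysis, and this is the only step in which the geometry of $M$ enters --- compactness provides the uniform time of validity and uniform control of the parametrix amplitudes. Everything after it (the $TT^{*}$ reduction, Schur's test on a one-dimensional curve) is soft, and in particular the geodesic curvature of $\ell$ is irrelevant for this $L^{2}$ bound. Finally, the exponent $\tfrac14$ is optimal in this generality: on $M=S^{2}$ with $\ell$ the equator, the highest-weight spherical harmonics of degree $n$ --- which concentrate on an $n^{-1/2}$-tube about $\ell$ and satisfy $r\asymp n$ --- have restriction $L^{2}(\ell)$-norm of exact order $n^{1/4}$.
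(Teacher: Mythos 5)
The paper offers no proof of this theorem at all --- it is quoted as a known result with references to Reznikov and to Burq--G\'erard--Tzvetkov --- and your argument is precisely the smoothed-spectral-projector $TT^{*}$ proof from the latter reference, correctly executed: the reduction $f=\chi(r-\sqrt{-\Delta})f$, the passage to $\psi(r-\sqrt{-\Delta})$ with $\psi=\chi^{2}$, Schur's test along the curve, and the endpoint computation are all right (your lower bound $d_{g}(x,y)\gtrsim|s-s_{x}|$ on each arc follows from the triangle inequality via $d(\gamma(s_{x}),\gamma(s))\le 2\,d(x,\gamma(s))$). The one step you do not carry out, the pointwise bound $|K_{r}(x,y)|\ll r\,(1+r\,d_{g}(x,y))^{-1/2}$, is indeed the technical heart, and it is the standard Sogge-type kernel estimate obtained exactly as you describe from the half-wave parametrix at times below the injectivity radius; so the proposal is a faithful account of the proof the paper is implicitly invoking.
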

\noindent
It is possible to get better bounds by adding some restrictions on $M$, $\ell$ and $f$. For example, whenever we consider arithmetic objects, one can use techniques from number theory to get a smaller exponent (see \cite{Blomer_2013} or \cite{Ghosh-Reznikov-Sarnak} for example). In this paper, we take $M$ to be the modular surface $\Sl2 (\mathbb{Z})\setminus \mathbb{H}$, $f$ a Hecke-Maass cusp form and $\ell \coloneqq \mathcal{C}_{D}$ the union of all closed geodesics in $\Sl2(\mathbb{Z})\setminus \mathbb{H}$ associated to a fundamental discriminant $D>0$. 
\newline 
\newline
Marshall \cite{Marshall_2016} proved the following
\begin{theorem*}
Let $M$ be a compact congruence arithmetic hyperbolic surface arising from a quaternion division algebra over $\mathbb{Q}$, $\ell \subseteq M$ a geodesic segment of unit length and $f$ an $L^{2}$ normalized Hecke-Maass cusp form with spectral parameter $t \in \mathbb{R}$.
Then 
\begin{equation}
\label{eq: marshall bound}
||f|_{\ell}||_{2} \ll_{\varepsilon} t^{\frac{3}{14}+\varepsilon}.
\end{equation}
\end{theorem*}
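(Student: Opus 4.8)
The strategy is to improve on Reznikov's general bound $\|f|_\ell\|_2\ll t^{1/4}$ by an amplification argument in the spirit of the Iwaniec--Sarnak method for sup-norms, the arithmetic input being the Hecke operators. Since $M$ is compact the spectrum of $\Delta$ is purely discrete; fix an orthonormal basis $(f_j)$ of Hecke--Maass forms with spectral parameters $t_j$ and Hecke eigenvalues $\lambda_j(n)$. First I would set up a pre-trace inequality adapted to the geodesic: choose an even, non-negative spherical multiplier $h$ with $h\gg 1$ on an $O(1)$-neighbourhood of $t$ and such that the point-pair invariant $k_2$ on $\mathbb H$ with spherical transform $h^2$ is supported on distances $\ll 1$ and satisfies $|k_2(u)|\ll t(1+tu)^{-1/2}$. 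With $R\colon L^2(M)\to L^2(\ell)$ the restriction map, $\iota\colon\ell\hookrightarrow M$ the inclusion, and the amplified operator $A=R\circ h(\Delta)\circ\sum_p x_p T_p$ (amplifier supported on primes $p\in[L,2L]$), one has $h(t_j)^{2}\big|\sum_p x_p\lambda_j(p)\big|^{2}\,\|f_j|_\ell\|_2^{2}\le\|AA^{*}\|_{\mathrm{op}}$, and by a Schur bound
\[
\|AA^{*}\|_{\mathrm{op}}\ \le\ \max_{s\in\ell}\ \int_\ell\Big|\sum_{n,m,d}\frac{d\,x_n\overline{x_m}}{\sqrt{nm}}\sum_{\det\delta=nm/d^{2}}k_2\big(\iota(s),\delta\,\iota(s')\big)\Big|\,ds',
\]
the inner $\delta$-sum running over integral matrices of determinant $\nu=nm/d^{2}\ll L^{2}$. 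Taking $x_p=\lambda_f(p)$ makes the amplifier lower bound $\big|\sum_p x_p\lambda_f(p)\big|=\sum_p\lambda_f(p)^{2}\gg L^{1-\varepsilon}$, so everything reduces to estimating the right-hand side, the scalar $\delta$ giving the diagonal main term and the rest the off-diagonal.

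The second and decisive step is the estimate of the off-diagonal. Conjugate $\Gamma$ by a fixed $g_{0}\in\Sl2(\mathbb R)$ straightening the lift $\tilde\ell$ of $\ell$ to a segment of the imaginary axis; a matrix $\delta$ (now in $g_{0}M_{2}(\mathbb Z)g_{0}^{-1}$, still of determinant $\nu$) then contributes only if it carries a point of $\tilde\ell$ into a distance-$\rho$ neighbourhood of $\tilde\ell$, the scale $\rho$ running dyadically over $[t^{-1},1]$. Unpacking the displacement condition on $\mathbb H$ forces the off-diagonal entries of $\delta$ to be $\ll\rho\sqrt\nu$ with bounded diagonal ratio, so $\delta$ lies in a thin neighbourhood of the one-parameter torus stabilising the axis; the scalar matrices $\sqrt\nu\,I$ (present only for $\nu$ a perfect square) are the unavoidable solutions and reproduce exactly the expected main term, and the task is to control the non-scalar ones. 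Here I would split according to the geometry of $\delta\tilde\ell$ relative to $\tilde\ell$: if they meet transversally, the oscillation of $k_2$ in the arc-length variable(s) yields a genuine stationary-phase saving, so only a crude lattice-point count of such $\delta$ is needed; if instead $\delta\tilde\ell$ is nearly parallel to $\tilde\ell$ over a whole unit segment there is no cancellation, but then $\delta$ must \emph{almost} conjugate the axis to itself, forcing a fixed indefinite binary quadratic form attached to $\ell$ to be small at the entries of $\delta$ — a strong Diophantine constraint making such $\delta$ rare. Combining the two regimes gives, for each $\nu$ and $\rho$, a bound that beats the naive volume heuristic.

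Finally one assembles the pieces and optimises: after division by $\big|\sum_p x_p\lambda_f(p)\big|^{2}\gg L^{2-\varepsilon}$ the scalar terms contribute a main term $\ll t^{1/2+\varepsilon}/L$ to $\|f|_\ell\|_2^{2}$, the off-diagonal contributes a polynomial expression in $L$ and $t$ obtained from the counts above summed against the amplifier coefficients, and one chooses the amplifier length $L$ to balance the two, the optimal choice producing the exponent $3/14$. I expect the counting of near-parallel Hecke matrices to be the main obstacle: without exploiting the Diophantine rigidity of such $\delta$ one cannot beat $t^{1/4}$ at all, and making the saving uniform across all dyadic scales $\rho$ and all $\nu\ll L^{2}$ — in particular handling the transition between the transversal and near-tangent regimes — is where the real work lies.
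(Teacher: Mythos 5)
First, a point of orientation: the paper does not prove this statement. It is quoted from Marshall \cite{Marshall_2016} as background in the introduction, so there is no in-paper proof to compare yours against; the relevant comparison is with Marshall's own argument, and separately with what the present paper actually does for its Theorem A (a completely different route via Waldspurger's formula and sums of central $L$-values, which is not applicable to a single geodesic segment on a compact quaternionic surface).

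Measured against Marshall's proof, your sketch identifies the correct strategy: an amplified pre-trace inequality restricted to the geodesic, a geometric analysis of the Hecke matrices of determinant $\nu\ll L^{2}$ that move the lifted segment to within $\rho$ of itself, a split between transversal returns (treated via oscillation of the kernel) and near-tangential ones (treated via a Diophantine count near the torus stabilising the axis), and a final optimisation of the amplifier length against the diagonal term $t^{1/2+\varepsilon}/L$. The gap is that the decisive quantitative content is asserted rather than supplied. You never state, let alone prove, the counting bound for matrices near the stabilising torus; you do not quantify the ``stationary-phase saving'' in the transversal regime; and as a result the off-diagonal contribution is never written down as an explicit function of $L$, $t$, $\nu$ and $\rho$. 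The exponent $3/14$ is precisely the fingerprint of those counts --- any weakening of them changes it --- so the claim that the optimisation ``produces $3/14$'' is unverifiable from what you have written. With only the trivial volume-heuristic bounds on the off-diagonal one recovers nothing beyond Reznikov's $t^{1/4}$. You correctly locate where the real work lies, but that deferred work is the proof.
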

\noindent Using \cite{bourgain2009geodesic} and the bound $||f||_{4} \ll_{\varepsilon} t^{\varepsilon}$ announced by Sarnak and Watson in \cite{sarnak-watson}, the exponent in (\ref{eq: marshall bound}) can be improved so that
\[
||f|_{\ell}||_{2} \ll_{\varepsilon} t^{\frac{1}{8}+\varepsilon}.
\]

\noindent Following a suggestion by Sarnak in a letter to Reznikov (see \cite{sarnak-reznikov}, p.6 example (c)), we prove  
\begin{theoremA}
\label{theorem A}
Let $f$ be a Hecke-Maass cusp form with $||f||_{2} = 1$ and Laplace eigenvalue $\lambda = 1/4+ t^{2}$ for some $t\geq 2$. Let $\mathcal{C}_{D}$ be the union of geodesics defined above. Then, 
\begin{equation}
    ||f|_{\mathcal{C}_{D}} ||_{2} \ll_{D,\varepsilon} t^{\theta+\varepsilon}
\end{equation}
where $\theta \geq 0$ is any bound towards the Ramanujan conjecture for Maass forms. 
\end{theoremA}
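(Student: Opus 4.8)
The plan is to follow Sarnak's suggestion and convert the $L^2$-restriction norm into a spectral sum of central $L$-values via Waldspurger's formula, then estimate that sum. First I would unfold: parametrizing each closed geodesic on $\mathcal{C}_D$ by arc length and using that the geodesics associated to the fundamental discriminant $D$ are precisely the images of the $\Gamma$-translates of a fixed geodesic whose stabilizer is generated by an element of the class group, one writes
\[
\||f|_{\mathcal{C}_D}\|_2^2 \;=\; \int_{\mathcal{C}_D} |f(z)|^2 \, ds \;=\; \sum_{[\mathfrak{a}]} \int_{\gamma_{\mathfrak{a}}} |f|^2 \, ds.
\]
Spectrally expanding $|f|^2$ in an orthonormal basis of $L^2(\Gamma\backslash\mathbb{H})$ — the constant function, Hecke-Maass cusp forms $\{u_j\}$, and the Eisenstein spectrum — and using that the period of a fixed basis element along $\mathcal{C}_D$ is a Hecke-twisted toric period, Waldspurger's formula (in the explicit Katok-Sarnak / Zagier form for $\Gamma = \mathrm{SL}_2(\mathbb{Z})$) identifies $|{\int_{\mathcal{C}_D} u_j\, ds}|^2$ with a constant times $L(1/2, u_j)\,L(1/2, u_j \otimes \chi_D)$ up to an archimedean factor, and similarly the Eisenstein contribution becomes $|\zeta(1/2+it_j)|^2|L(1/2+it_j,\chi_D)|^2$-type integrals. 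Thus
\[
\||f|_{\mathcal{C}_D}\|_2^2 \;\approx\; \sum_j \langle |f|^2, u_j\rangle \cdot c_j \, \frac{L(1/2,u_j)\,L(1/2, u_j\otimes\chi_D)}{L(1,\mathrm{sym}^2 u_j)} \;+\; (\text{Eisenstein}) \;+\; (\text{constant term}),
\]
where the archimedean weight $c_j$ decays rapidly once the spectral parameter $t_j$ of $u_j$ exceeds $2t$ (the relevant cutoff coming from the gamma factors in the triple-product / Watson-type archimedean integral attached to $f, f, u_j$).

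The next step is to insert known bounds. The triple product coefficient $\langle |f|^2, u_j\rangle$ is, by Watson's formula, essentially $(L(1/2,u_j) L(1/2, \mathrm{sym}^2 f \otimes u_j))^{1/2}/(\cdots)^{1/2}$ times an archimedean factor; rather than going fully through Watson one can also bound $\langle |f|^2, u_j \rangle$ directly using the sup-norm or $L^4$-norm input and the local Weyl law for the density of $u_j$ with $t_j \le 2t$. Combining the archimedean localization (which restricts the sum to $t_j \ll t^{1+\varepsilon}$ with polynomial tails), the convexity or subconvexity bounds for the $L$-functions $L(1/2,u_j)$ and $L(1/2,u_j\otimes\chi_D)$, the mean-value / large-sieve estimates for these $L$-values over the spectral family (Motohashi-type or spectral large sieve), and the count $\#\{j : t_j \le 2t\} \ll t^2$, one expects the main term to be of size $t^{2\theta+\varepsilon}$ — each $L$-value in the numerator being responsible, after averaging, for a factor $t^{\theta+\varepsilon}$ beyond what the trivial bound gives, while the smooth weights and the $1/L(1,\mathrm{sym}^2)\gg t^{-\varepsilon}$ denominator are harmless. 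Taking square roots yields $\||f|_{\mathcal{C}_D}\|_2 \ll_{D,\varepsilon} t^{\theta+\varepsilon}$. The Eisenstein and residual contributions are handled identically, using the subconvexity bound for $\zeta$ and $L(s,\chi_D)$ on the critical line, and are of strictly smaller order.

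The main obstacle, as I see it, is controlling the spectral sum \emph{uniformly} — specifically, getting the averaged bound for $\sum_{t_j \le 2t} w_j\, L(1/2,u_j) L(1/2,u_j\otimes\chi_D)$ with the right power of $t$ and with explicit, effective dependence on $D$. This is a fourth-moment-type quantity in the $t_j$-aspect for a product of two degree-two $L$-functions (equivalently a mean value of degree-four $L$-functions $L(1/2, u_j \times u_j \otimes \chi_D)$ in disguise), and obtaining it with only $\theta$ (and not, say, $2\theta$) in the exponent requires that the individual Ramanujan-type savings at the \emph{ramified / archimedean} places, together with the spectral large sieve, interlock correctly; a crude term-by-term application of convexity would give a worse exponent than claimed. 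A secondary but real technical point is making the passage through Waldspurger's formula fully explicit for the union $\mathcal{C}_D$ over the whole class group (as opposed to a single geodesic), keeping track of the class number $h(D)$ and the regulator so that the final constant's dependence on $D$ is honest; I would handle this by citing the explicit period formulas of Katok-Sarnak and Popa, and absorbing the $D$-dependence into the implied constant as the theorem permits.
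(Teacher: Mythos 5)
There is a genuine gap, and it lies at the very first step: you spectrally expand $|f|^{2}$ on the surface and apply Waldspurger to the toric periods of the \emph{basis elements} $u_{j}$, whereas the paper expands $f|_{\mathcal{C}_{D}}$ itself in a character basis of $L^{2}(\mathcal{C}_{D})$ (identifying $\mathcal{C}_{D}$ with the compact abelian group $\mathbb{R}/\mathbb{Z}\times H^{+}(K)$) and applies Waldspurger to the \emph{twisted periods of $f$}, obtaining $\|f|_{\mathcal{C}_{D}}\|_{2}^{2}\asymp |\mathcal{C}_{D}|^{-1}\sum_{\psi,n}\Lambda(1/2,f\times\theta_{\chi_{\psi,n}})/\Lambda(1,\Ad f)$. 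Your route is essentially the triple-product/QUE-restriction approach that the introduction already flags as giving only $t^{1/8+\varepsilon}$ even granting the unproved $\|f\|_{4}\ll t^{\varepsilon}$: the coefficients $\langle |f|^{2},u_{j}\rangle$ are, by Watson, square roots of triple-product central values $L(1/2,u_{j}\times f\times f)$, and no individual or averaged bound anywhere near Lindel\"of is known for these in the $t$-aspect. A Cauchy--Schwarz against the spectral large sieve reduces one factor to $\|f\|_{4}^{2}$, which is exactly the bottleneck of the weaker bound, not a way past it.

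Concretely, the claim that the spectral sum is of size $t^{2\theta+\varepsilon}$ is asserted rather than derived, and there is no mechanism in your setup by which the Ramanujan exponent $\theta$ could enter: $\theta$ is a bound on the Hecke eigenvalues $\lambda_{f}(n)\ll_{\varepsilon}n^{\theta+\varepsilon}$ of the fixed form $f$, while your sum involves central values $L(1/2,u_{j})L(1/2,u_{j}\otimes\chi_{D})$ of the varying forms $u_{j}$ weighted by triple products. In the paper, $\theta$ enters at the last step for a precise reason: after the approximate functional equation, character orthogonality over $\widehat{H^{+}(K)}$ (which restricts to principal ideals with totally positive generators $\alpha\in\mathcal{F}_{D}$), and Poisson summation in the spectral-character parameter $n$ (which confines $\alpha$ to thin cones $S_{\xi,R}$ whose lattice points are counted by the Lipschitz principle), one is left with short sums of $|\lambda_{f}(\alpha)|$, each term bounded by $|N(\alpha)|^{\theta+\varepsilon}$. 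None of these steps has an analogue in your decomposition, so the proposal as written cannot reach $t^{\theta+\varepsilon}$.
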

\noindent In particular, assuming the Ramanujan conjecture we would get $||f|_{\mathcal{C}_{D}}||_{2} \ll_{\varepsilon} t^{\varepsilon}$. Using the Kim-Sarnak bound \cite{kim-sarnak-bound}, we have the unconditional bound
\[
||f|_{\mathcal{C}_{D}} ||_{2} \ll_{D,\varepsilon} t^{\frac{7}{64}+\varepsilon}
\]
which is an improvement over the previously mentioned results.

\subsection*{Sketch of proof}
We start the proof by finding an appropriate expression for $||f|_{\mathcal{C}_{D}}||_{2}^{2}$ involving completed L-functions. To do so, we endow $\mathcal{C}_{D}$ with a compact abelian group structure by constructing a homeomorphism to a disjoint union of circles. Then, using representation theory of compact abelian groups we find an orthonormal basis of $L^{2}(\mathcal{C}_{D})$ which allows us to  write $||f|_{\mathcal{C}_{D}}||_{2}^{2}$ as a sum of squares of Fourier coefficients. By Waldspurger's formula, these Fourier coefficients are related to the values $\Lambda (1/2,f\times \theta_{\chi})$ for some Hecke characters $\chi$. In particular, to prove Theorem A it will be enough to show that
\[
\sum_{\psi\in \widehat{H^{+}(K)}}\sum_{n \in \mathbb{Z}}\frac{\gamma(1/2,f\times \theta_{\chi_{\psi,n}})}{\gamma(1,\Ad f)} L(1/2, f\times \theta_{\chi_{\psi,n}}) \ll_{\varepsilon,D} t^{2\theta + \varepsilon}
\]
where $K = \mathbb{Q}(\sqrt{D})$ and $\widehat{H^{+}(K)}$ is the dual of the ideal class group.
\newline
\newline
The quotient of gamma factors decays exponentially for $|n| \geq c_{D}t$ where $c_{D}$ is a constant to be determined later. In particular, we are essentially dealing with a finite sum. Using the approximate functional equation, we can express $L(1/2,f\times \theta_{\chi_{\psi,n}})$ as a sum in terms of the Fourier coefficients $\lambda_{f}(m)$ of $f$ and $\lambda_{\theta_{\chi_{\psi,n}}}(m)$ of $\theta_{\chi_{\psi,n}}$. By definition, $\lambda_{\theta_{\chi_{\psi,n}}}(m)$ consists of a sum over ideals in $K=\mathbb{Q}(\sqrt{D})$. However, after applying characters orthogonality to the $\widehat{H^{+}(K)}$ sum, we only need to consider sums over principal ideals with totally positive generators. In other words, the sum over ideals can be reduced to a sum over elements $\alpha$ in an appropriately chosen fundamental domain $\mathcal{F}_{D}$ for the action of the totally positive units on the totally positive integers of $K$.
\newline 
\newline 
Finally, we apply the Poisson summation formula to the $n$ sum to find some cancellations. As a result, we can further restrict our $\alpha$ sum to a small enough subset of generators whenever $|c_{D}t - n|$ is large. Applying a dyadic partition of unity allows us to minimize the effect of the terms in the $n$ sum for which $|c_{D}t - n|$ is small.

\section{Background material and setup}
\label{sec: background material}

\subsection{Real quadratic number fields} 
\label{real quadratic fields}

\noindent Throughout this paper, we will let $D>0$ be a fundamental discriminant and $K\coloneqq \mathbb{Q}(\sqrt{D})$ the corresponding real quadratic number field. We denote the ring of integers, group of units and narrow class group of $K$ by $\mathcal{O}_{K}$, $U(K)$ and $H^{+}(K)$ respectively.
\newline
\newline
As a consequence of Dirichlet's unit theorem, the group of totally positive units $U^{+}(K)$ is infinite cyclic. We denote by $\varepsilon_{D}$ the unique generator of $U^{+}(K)$ such that $\varepsilon_{D}>1$ as an element of $\mathbb{R}$. 
\newline
\newline
Finally, we consider the action of the group $U^{+}(K)$ on $\mathcal{O}_{K}^{+}$, the set of totally positive integers, via multiplication. The embedding $ \iota_{K} \colon \alpha \mapsto (\alpha, \alpha^{*})
$ of $K$ into $\mathbb{R}^{2}$ 
allows us to obtain a fundamental domain $\mathcal{F}_{D} \subseteq \mathbb{R}^{2}$ for this action. More precisely,

\begin{proposition}
\label{fundamental domain}
Let $\beta_{D} \coloneqq \frac{1+\sqrt{D}}{2}$ for $D\equiv 1 \mod 4$ and $\beta_{D} \coloneqq \frac{\sqrt{D}}{2}$ otherwise. If we see $K$ as a subset $\mathbb{R}^{2}$ using $\iota_{K}$, a fundamental domain $\mathcal{F}_{D}$ for the action of $U^{+}(K)$ on $\mathcal{O}_{K}^{+}$ is equal to
\[
\mathcal{F}_{D} \coloneqq C_{D} \cap L_{D}
\]
where $C_{D}$ is the cone
$
C_{D} \coloneqq \{ (x,y) \in (\mathbb{R}_{>0})^{2} \mid y \leq x < \varepsilon_{D}^{2} y \} 
$
and $L_{D}$ is the lattice
$
L_{D} \coloneqq \{ (a+b\beta_{D}, a+b\beta_{D}^{*}) \mid a,b \in \mathbb{Z} \}.
$
\end{proposition}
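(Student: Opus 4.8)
The plan is to work entirely in the real-embedding picture $\iota_K(K)\subseteq\mathbb{R}^2$ and reduce the statement to a one-dimensional fact about scaling. First I would record that $\mathcal{O}_K=\mathbb{Z}+\mathbb{Z}\beta_D$: this is the standard integral basis of a quadratic field, uniform in the two cases because when $D\equiv 0\pmod 4$ one has $D=4m$ with $m$ squarefree and $\beta_D=\sqrt{D}/2=\sqrt{m}$. Hence $\iota_K(\mathcal{O}_K)=L_D$, and since $\alpha\in\mathcal{O}_K$ is totally positive precisely when $\iota_K(\alpha)\in(\mathbb{R}_{>0})^2$, we get $\iota_K(\mathcal{O}_K^{+})=L_D\cap(\mathbb{R}_{>0})^2$. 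As $C_D\subseteq(\mathbb{R}_{>0})^2$ by definition, $C_D\cap L_D=C_D\cap\iota_K(\mathcal{O}_K^{+})$, so it suffices to show that $C_D\cap L_D$ meets every $U^{+}(K)$-orbit on $\mathcal{O}_K^{+}$ in exactly one point.

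Next I would unwind the action of $U^{+}(K)=\langle\varepsilon_D\rangle$. A totally positive unit $u$ has norm $uu^{*}=1$ (any unit has norm $\pm 1$, and $u,u^{*}>0$ excludes $-1$), so under $\iota_K$ multiplication by $\varepsilon_D$ becomes the linear map $T\colon(x,y)\mapsto(\varepsilon_D x,\,\varepsilon_D^{-1}y)$ of $(\mathbb{R}_{>0})^2$, and the group acts through the iterates $T^{n}$, $n\in\mathbb{Z}$. The crucial observation is that $T$ multiplies the ratio $x/y$ by $\varepsilon_D^{2}$; since $\varepsilon_D>1$, for every $(x,y)\in(\mathbb{R}_{>0})^2$ there is a \emph{unique} $n\in\mathbb{Z}$ with $\varepsilon_D^{2n}(x/y)\in[1,\varepsilon_D^{2})$, equivalently with $T^{n}(x,y)\in C_D$. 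Thus $C_D$ is a strict fundamental domain for the $U^{+}(K)$-action on $(\mathbb{R}_{>0})^2$: every orbit meets it in exactly one point.

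Finally I would descend to the lattice. Since $\varepsilon_D$ is a unit of the ring $\mathcal{O}_K$, multiplication by it permutes $\mathcal{O}_K$, so $L_D$ is $T$-stable; hence the $T$-orbit of any point of $L_D\cap(\mathbb{R}_{>0})^2$ is contained in $L_D$, and the $U^{+}(K)$-orbits on $\iota_K(\mathcal{O}_K^{+})$ are exactly the $T$-orbits in $(\mathbb{R}_{>0})^2$ that pass through $L_D$. By the previous step each such orbit has a unique point in $C_D$, and that point automatically lies in $L_D$; therefore $\mathcal{F}_D=C_D\cap L_D$ contains exactly one representative of every $U^{+}(K)$-orbit on $\mathcal{O}_K^{+}$, which is the assertion.

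I do not expect a real obstacle here; the points that need care are the normalizations — that $\varepsilon_D^{*}=\varepsilon_D^{-1}$ (so $T$ has the stated form), that the scaling factor on $x/y$ is $\varepsilon_D^{2}$ rather than $\varepsilon_D$, and that the half-open inequality $y\le x<\varepsilon_D^{2}y$ is exactly what makes the cone pick out one point per orbit instead of none or two — together with the bookkeeping identifying $\mathcal{O}_K$ with $L_D$ uniformly in the residue of $D$ modulo $4$.
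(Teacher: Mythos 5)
Your argument is correct and complete; the paper states Proposition \ref{fundamental domain} without any proof, so there is nothing to compare it against. Your route --- identifying $\iota_{K}(\mathcal{O}_{K})$ with $L_{D}$ via the integral basis $\{1,\beta_{D}\}$ uniformly in the residue of $D$ modulo $4$, observing that $N(\varepsilon_{D})=1$ so that $\varepsilon_{D}$ acts as $(x,y)\mapsto(\varepsilon_{D}x,\varepsilon_{D}^{-1}y)$ and scales the ratio $x/y$ by $\varepsilon_{D}^{2}$, and using the half-open condition $1\leq x/y<\varepsilon_{D}^{2}$ to pick exactly one point from each orbit --- is the standard verification and treats all the relevant normalizations correctly.
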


\subsection{Closed geodesics of the modular surface}

\noindent  As mentioned previously, we will be working with $\Sl2(\mathbb{Z})\setminus \mathbb{H}$, the orbit space of the action of subgroup $\Sl2(\mathbb{Z})$ on $\mathbb{H}$. In this subsection, we give a summary of the description of closed geodesics in $\Sl2(\mathbb{Z})\setminus \mathbb{H}$ developed in Section 1 of \cite{Sarnak-geodesics}. 
 
\begin{definition}
\begin{enumerate}[(i)]
\item An element $A \in \Sl2(\mathbb{Z})$ is said to be hyperbolic  if the corresponding M\"obius transformation has precisely two fixed points $x,x' \in \mathbb{R}$.
\item A hyperbolic element $A \in \Sl2(\mathbb{Z})$ is called primitive if we cannot find any $B \in \Sl2(\mathbb{Z})$ and $n \in \mathbb{Z}\setminus \{0,1,-1 \}$ with
$
A = B^{n}.
$
\end{enumerate}
\end{definition}

\noindent We have a right action of $\Sl2(\mathbb{Z})$ on hyperbolic elements via
$
A.B \coloneqq B^{-1}A B
$. This leads to an induced action of $\PSl2(\mathbb{Z})$ on primitive hyperbolic elements. 
\begin{definition}
Let $A \in \Sl2(\mathbb{Z})$ be hyperbolic and let $\begin{pmatrix} a & 0 \\ 0 & a^{-1} \end{pmatrix}$, $|a|>1$ be the unique matrix of this form which is conjugate to $A$. We define the norm of $A$ to be
$
N(A) = a^{2}.
$
\end{definition}

\noindent Let $A \in \Sl2(\mathbb{Z})$ be a primitive hyperbolic element and let $x,x' \in \mathbb{R}$ be the fixed points of the corresponding M\"obius transformation. Let $\tilde{\gamma_{A}} \subseteq \mathbb{H}$ be the geodesic joining $x$ and $x'$ and let $\gamma_{A}$ be its image in $\Sl2(\mathbb{Z})\setminus \mathbb{H}$.
Then, $\gamma_{A}$ is a closed geodesic in $\Sl2(\mathbb{Z})\setminus \mathbb{H}$ of length $\log(N(A))$. Moreover, this construction induces a bijection between closed geodesics in $\Sl2(\mathbb{Z})\setminus \mathbb{H}$ and conjugacy classes of primitive hyperbolic elements (up to identifying $A$ with $-A$ ).

\begin{definition}
A primitive indefinite quadratic form of discriminant $D>0$ is a quadratic form $Q(x,y)$ of the form
$
Q(x,y) = ax^{2}+bxy + cy^{2}
$
with $a,b,c \in \mathbb{Z}$, $\gcd(a,b,c) =1$ and $b^{2}-4ac = D$. 
\newline We use the shorthand notation $[a,b,c]$ to denote the quadratic form $Q(x,y) = ax^{2}+bxy + cy^{2}$.
\end{definition}

\noindent From now on, unless specified otherwise, all quadratic forms we consider will be primitive indefinite.
\newline 
\newline 
We have an $\Sl2(\mathbb{Z})$ action on the space of quadratic forms via change of variables. More precisely, given a quadratic form $Q(x,y)$ and $A \in \Sl2(\mathbb{Z})$, we define
$
(Q.A)(x,y) \coloneqq Q(A(x,y))
$
where $A(x,y)$ is the image of $(x,y)$ by the linear transformation induced by $A$. 
For any fundamental discriminant $D>0$, this action induces an action of $\PSl2(\mathbb{Z})$ on the space of quadratic forms of discriminant $D$.

\begin{proposition}
\label{quadratic forms- hyperbolic elemets}
Let $[a,b,c]$ be a quadratic form. Then, 
\begin{enumerate}[(i)]
    \item the stabilizer of $[a,b,c]$ by the action of $\PSl2(\mathbb{Z})$ is the image of the infinite cyclic subgroup with generator
$
M_{[a,b,c]} = \begin{pmatrix} \frac{x_{D}-by_{D}}{2} & -cy_{D} \\ ay_{D} & \frac{x_{D}+by_{D}}{2} \end{pmatrix}
$
where $D>0$ is the discriminant of $[a,b,c]$ and $(x_{D},y_{D}) \in (\mathbb{Z}_{\geq 0})^{2}$ is the fundamental solution of the Pell equation
$
x^{2} - Dy^{2} = 4
$,
\item the map $[a,b,c] \mapsto M_{[a,b,c]}$ gives a bijection of $\PSl2(\mathbb{Z})$-sets between quadratic forms and primitive hyperbolic elements in $\PSl2(\mathbb{Z})$. In particular, this bijection descends to a bijection of the corresponding equivalence classes and
\item for any fundamental discriminant $D>0$, the bijection $[a,b,c] \mapsto M_{[a,b,c]}$ restricts to a bijection between quadratic forms of discriminant $D>0$ and primitive hyperbolic elements of norm $\varepsilon_{D}^{2}$.
\end{enumerate}
\end{proposition}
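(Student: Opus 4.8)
The plan is to establish Proposition \ref{quadratic forms- hyperbolic elemets} by working out the three claims in sequence, leveraging the classical correspondence between $\PSl2(\mathbb{Z})$-orbits of indefinite binary quadratic forms and the automorphism groups of orders in real quadratic fields. The natural setup is to attach to a form $Q = [a,b,c]$ of discriminant $D$ the lattice $\mathbb{Z} + \mathbb{Z}\tau_Q$, where $\tau_Q$ is a root of $at^2 + bt + c = 0$; the automorphs of $Q$ in $\Sl2(\mathbb{Z})$ are then exactly the matrices realizing multiplication by a unit $\varepsilon \in \mathcal{O}_K$ with $N(\varepsilon) = 1$ on this lattice.

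For part (i), I would first verify by direct substitution that $M_{[a,b,c]} = \begin{pmatrix} \frac{x_D - by_D}{2} & -cy_D \\ ay_D & \frac{x_D + by_D}{2} \end{pmatrix}$ has integer entries (using $D \equiv b^2 \pmod 2$ and the fact that $x_D^2 - D y_D^2 = 4$ forces $x_D \equiv b y_D \pmod 2$), has determinant $\frac{x_D^2 - b^2 y_D^2}{4} + a c y_D^2 = \frac{x_D^2 - (b^2 - 4ac)y_D^2}{4} = \frac{x_D^2 - D y_D^2}{4} = 1$, and fixes the form $Q$ under the change-of-variables action — a routine computation. To see that $M_{[a,b,c]}$ generates the full stabilizer, I would use that any automorph corresponds to a solution of $x^2 - Dy^2 = 4$ via $A = \frac{1}{2}(xI + y R_Q)$ where $R_Q = \begin{pmatrix} -b & -2c \\ 2a & b \end{pmatrix}$, so the stabilizer is cyclic generated by the automorph coming from the fundamental Pell solution $(x_D, y_D)$; the quotient by $\pm I$ in $\PSl2(\mathbb{Z})$ absorbs the $(-x_D, -y_D)$ ambiguity, leaving an infinite cyclic group as claimed.

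For part (ii), I would show the map $[a,b,c] \mapsto M_{[a,b,c]}$ is $\PSl2(\mathbb{Z})$-equivariant — i.e. $M_{Q.A} = A^{-1} M_Q A$ — which follows from the functoriality of the construction $Q \mapsto R_Q$ under change of variables. Injectivity: $M_Q$ determines $R_Q = \frac{2}{y_D}(M_Q - \frac{x_D}{2}I)$ hence $a, b, c$. Surjectivity onto primitive hyperbolic elements: given a primitive hyperbolic $M$ with fixed points $x, x'$ that are conjugate quadratic irrationals, the minimal polynomial of $x$ (cleared of denominators, made primitive) is a primitive form $Q$ with $M_Q = M^{\pm 1}$, and primitivity of $M$ matches primitivity/fundamentality on the form side. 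Then part (iii) is immediate: a form of discriminant $D$ has Pell solution $(x_D, y_D)$ with $\varepsilon_D = \frac{x_D + y_D\sqrt{D}}{2}$, so $M_{[a,b,c]}$ is conjugate to $\mathrm{diag}(\varepsilon_D, \varepsilon_D^{-1})$ and has norm $\varepsilon_D^2$; conversely matching norms forces matching discriminants since the Pell fundamental solution is determined by $D$.

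The main obstacle I anticipate is part (ii)'s surjectivity and the precise bookkeeping of primitivity: one must check that the form attached to a \emph{primitive} hyperbolic element is itself primitive (gcd of coefficients $1$) and conversely, and that the passage to $\PSl2$ and the identification of $A$ with $-A$ are compatible on both sides — in particular that no nontrivial power of $M_Q$ can coincide with a $\pm$-sign twist. This is where the hypothesis that $(x_D, y_D)$ is the \emph{fundamental} Pell solution does the real work, and I would spend most of the effort making that link airtight rather than on the determinant and fixed-point computations, which are mechanical.
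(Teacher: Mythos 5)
The paper does not actually prove this proposition: it is quoted as background from Section 1 of the cited Sarnak reference, so there is no in-paper argument to compare against. Your route is the standard classical one, and for parts (i) and (ii) the outline is sound: the integrality and determinant checks, the parametrization of automorphs as $\tfrac{1}{2}(xI+yR_Q)$ with $(x,y)$ running over solutions of $x^{2}-Dy^{2}=4$, the equivariance $R_{Q.A}=A^{-1}R_{Q}A$, and surjectivity via the fixed-point form $[r,s-p,-q]$ made primitive. One small point to make explicit in the injectivity step: your formula $R_{Q}=\tfrac{2}{y_{D}}(M_{Q}-\tfrac{x_{D}}{2}I)$ presupposes $y_{D}$, which is not given with $M_{Q}$; you recover it as $x_{D}=\operatorname{tr}M_{Q}$ and $y_{D}=\gcd$ of the off-diagonal entries and the difference of the diagonal entries, using primitivity of $[a,b,c]$.

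The genuine gap is in your converse direction of (iii): ``matching norms forces matching discriminants since the Pell fundamental solution is determined by $D$'' does not work. Equal norms only force equal traces, i.e.\ $x_{D'}=x_{D}$, and a primitive hyperbolic element of norm $\varepsilon_{D}^{2}$ is attached by (ii) to a primitive form whose discriminant is $x_{D}^{2}-4$ divided by a square, hence of the shape $Df^{2}$ with $f\mid y_{D}$; it need not equal $D$. Concretely, take $D=8$, so $(x_{8},y_{8})=(6,2)$ and $\varepsilon_{8}=3+2\sqrt{2}$. The matrix $\begin{pmatrix}3&8\\1&3\end{pmatrix}$ is a primitive hyperbolic element of norm $(3+2\sqrt{2})^{2}=\varepsilon_{8}^{2}$, but its associated primitive form is $[1,0,-8]$ of discriminant $32$, not $8$ (here $\varepsilon_{32}=\varepsilon_{8}$ because the order of conductor $2$ has the same fundamental totally positive unit). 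So the preimage of $\{$primitive hyperbolic elements of norm $\varepsilon_{D}^{2}\}$ under your bijection is the set of forms of all discriminants $Df^{2}$ with $\varepsilon_{Df^{2}}=\varepsilon_{D}$, and closing part (iii) requires either ruling these extra discriminants out (which is false in general, as the example shows) or restricting the right-hand set to those elements whose fixed points generate the maximal order. You correctly identified this as the place where the ``fundamental solution'' hypothesis must do the work, but the argument you propose there does not close it; as stated, this is also a defect of the quoted statement itself, which the paper inherits from the reference without proof.
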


\noindent As a consequence, we have an explicit description for the closed geodesics of $\Sl2(\mathbb{Z})\setminus \mathbb{H}$.
\begin{theorem}
Let $\gamma \subseteq \Sl2(\mathbb{Z}) \setminus \mathbb{H}$ be a closed geodesic and let $\tilde{\gamma}$ be a lift of $\gamma$ to $\mathbb{H}$. Let $\theta_{1},\theta_{2} \in \mathbb{R}$ be the endpoints of $\tilde{\gamma}$. Then,
\begin{enumerate}[(i)]
    \item there exists a fundamental discriminant $D>0$ such that $\gamma$ has length $\log(\varepsilon_{D}^{2})$,
    \item there exists a quadratic form $[a,b,c]$ of discriminant $D$ such that $M_{[a,b,c]}$ fixes the endpoints of $\tilde{\gamma}$. In this case, $\theta_{1}$ and $\theta_{2}$ are roots of the polynomial $ax^{2}+bx +c$ and in particular, $\theta_{1}, \theta_{2} \in \mathbb{Q}(\sqrt{D})$ are Galois conjugates.
\end{enumerate}
\end{theorem}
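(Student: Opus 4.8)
The plan is to transport the statement about $\gamma$ through the two correspondences already recorded: closed geodesics $\leftrightarrow$ conjugacy classes of primitive hyperbolic elements of $\PSl2(\mathbb{Z})$, and, by Proposition~\ref{quadratic forms- hyperbolic elemets}, primitive hyperbolic elements $\leftrightarrow$ primitive indefinite binary quadratic forms. Concretely: since $\tilde\gamma$ projects to a closed geodesic, the subgroup of $\Sl2(\mathbb{Z})$ fixing both $\theta_1$ and $\theta_2$ consists of hyperbolic elements (together with the identity) and is infinite cyclic; let $A\in\Sl2(\mathbb{Z})$ be a hyperbolic generator, so that $\theta_1,\theta_2$ are exactly the two fixed points of the M\"obius transformation $A$ and $\gamma=\gamma_A$. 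By Proposition~\ref{quadratic forms- hyperbolic elemets}(ii) there is a unique primitive indefinite form $[a,b,c]$ with $M_{[a,b,c]}=A$ in $\PSl2(\mathbb{Z})$; put $D:=b^2-4ac>0$. As $M_{[a,b,c]}=A$, it fixes $\theta_1$ and $\theta_2$, which is the form asserted in (ii).

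Both conclusions then follow from the explicit matrix $M_{[a,b,c]}=\begin{pmatrix}\frac{x_D-by_D}{2}&-cy_D\\ ay_D&\frac{x_D+by_D}{2}\end{pmatrix}$ of Proposition~\ref{quadratic forms- hyperbolic elemets}(i), where $(x_D,y_D)$ is the fundamental solution of $x^2-Dy^2=4$. A point $z$ is fixed by $M_{[a,b,c]}$ iff $ay_Dz^2+\frac{x_D+by_D}{2}\,z=\frac{x_D-by_D}{2}\,z-cy_D$, i.e.\ (dividing by $y_D\neq 0$) iff $az^2+bz+c=0$; hence $\theta_1,\theta_2$ are the roots $\frac{-b\pm\sqrt D}{2a}\in\mathbb{Q}(\sqrt D)$, a Galois-conjugate pair, giving (ii). For (i), $\gamma=\gamma_A$ has length $\log N(A)=\log N(M_{[a,b,c]})$; the matrix $M_{[a,b,c]}$ has trace $x_D$, and the Pell relation $x_D^2-Dy_D^2=4$ gives determinant $\frac{x_D^2-b^2y_D^2}{4}+acy_D^2=\frac{x_D^2-(b^2-4ac)y_D^2}{4}=1$, so its eigenvalues are $\frac{x_D\pm y_D\sqrt D}{2}$. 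The one of absolute value $>1$ is $\varepsilon_D=\frac{x_D+y_D\sqrt D}{2}$, since the positive solutions of $x^2-Dy^2=4$ parametrise $U^+(K)$ and the fundamental solution corresponds to the fundamental totally positive unit; therefore $N(M_{[a,b,c]})=\varepsilon_D^2$ and $\gamma$ has length $\log(\varepsilon_D^2)$.

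The only genuine subtlety is in the first step: one must check that the form returned by Proposition~\ref{quadratic forms- hyperbolic elemets} is attached to the specific generator $A$ fixing $\theta_1,\theta_2$, not merely to a conjugate of it, so that $M_{[a,b,c]}$ fixes the actual endpoints. This is handled by noting that conjugating $A$ by an element of its own (cyclic) stabiliser leaves the associated form fixed, so the form attached to $A$ itself does the job. The remaining indeterminacies --- replacing $A$ by $-A$ or by $A^{-1}$, and the freedom in choosing the lift $\tilde\gamma$ inside its $\Sl2(\mathbb{Z})$-orbit --- only change $[a,b,c]$ to $\pm[a,b,c]$ or to an $\Sl2(\mathbb{Z})$-equivalent form, hence leave $D$ and the unordered pair $\{\theta_1,\theta_2\}$ untouched; indeed $D$ is already determined as the discriminant of the (unique up to sign) primitive integral form with roots $\theta_1,\theta_2$. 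Everything else is a direct computation with $M_{[a,b,c]}$ and the Pell equation.
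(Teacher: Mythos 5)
Your argument is correct and follows essentially the same route as the paper, which states this theorem as an immediate consequence of the geodesic/conjugacy-class correspondence and Proposition \ref{quadratic forms- hyperbolic elemets} (deferring details to the cited Sarnak reference); you simply write out the fixed-point equation $y_{D}(az^{2}+bz+c)=0$ and the trace/determinant computation identifying the eigenvalues $\tfrac{x_{D}\pm y_{D}\sqrt{D}}{2}$ with $\varepsilon_{D}^{\pm 1}$, which is exactly the content being invoked. The one caveat, which your proof shares with the paper's own statement, is that setting $D:=b^{2}-4ac$ does not by itself make $D$ a \emph{fundamental} discriminant (the bijection in Proposition \ref{quadratic forms- hyperbolic elemets}(ii) runs over all primitive indefinite forms, whose discriminants need not be fundamental), so strictly speaking part (i) as literally stated requires either restricting to the geodesics the paper actually uses or interpreting $\varepsilon_{D}$ via the Pell equation for non-fundamental $D$; this is harmless here since only the sets $\mathcal{C}_{D}$ for fundamental $D$ enter the rest of the argument.
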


\noindent Finally, let $D>0$ be a fundamental discriminant and $K= \mathbb{Q}(\sqrt{D})$. There is a natural bijection between the narrow class group $H^{+}(K)$ of $K$ and the equivalence classes of quadratic forms of discriminant $D$ modulo $\Sl2(\mathbb{Z})$  (see the appendix in \cite{quadratic-forms-ideal}). As a consequence, we have the following
\begin{theorem}
\label{closed geodesics- class group}
There exists a bijection between the narrow class group $H^{+}(K)$ and the set of closed geodesics of $\Sl2(\mathbb{Z})\setminus \mathbb{H}$ of length $\log(\varepsilon_{D}^{2})$.
\end{theorem}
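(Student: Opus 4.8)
The plan is to assemble the statement from the correspondences already recorded in this section, so the work is almost entirely bookkeeping of the various equivalence relations. First I would fix $D>0$ and recall from Proposition~\ref{quadratic forms- hyperbolic elemets}(ii)--(iii) that $[a,b,c]\mapsto M_{[a,b,c]}$ is a bijection of $\PSl2(\mathbb{Z})$-sets between primitive indefinite quadratic forms of discriminant $D$ and primitive hyperbolic elements of $\PSl2(\mathbb{Z})$ of norm $\varepsilon_{D}^{2}$. Since a bijection of $G$-sets carries orbits to orbits bijectively, passing to quotients gives a bijection
\[
\{\text{quadratic forms of discriminant } D\}/\PSl2(\mathbb{Z}) \;\xrightarrow{\ \sim\ }\; \{\text{primitive hyperbolic } A \text{ with } N(A)=\varepsilon_{D}^{2}\}/\text{conjugacy}.
\]
Here I would note that the $\Sl2(\mathbb{Z})$-action on quadratic forms factors through $\PSl2(\mathbb{Z})$ (the matrix $-I$ acts trivially by change of variables), so the left-hand side is the same as the set of $\Sl2(\mathbb{Z})$-equivalence classes of quadratic forms of discriminant $D$; similarly, on the primitive hyperbolic side the $\pm$ identification is exactly the passage from $\Sl2(\mathbb{Z})$ to $\PSl2(\mathbb{Z})$.

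Next I would invoke the earlier theorem of this section: the construction $A\mapsto\gamma_{A}$ induces a bijection between conjugacy classes of primitive hyperbolic elements (with $A\sim -A$) and closed geodesics of $\Sl2(\mathbb{Z})\setminus\mathbb{H}$, and $\gamma_{A}$ has length $\log(N(A))$. Restricting this bijection to the conjugacy classes with $N(A)=\varepsilon_{D}^{2}$ therefore yields a bijection with the set of closed geodesics of length $\log(\varepsilon_{D}^{2})$; this is where one must check that the norm condition translates precisely into the length condition, which is immediate from the length formula and Proposition~\ref{quadratic forms- hyperbolic elemets}(iii).

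Finally I would use the natural bijection between the narrow class group $H^{+}(K)$ and the set of $\Sl2(\mathbb{Z})$-equivalence classes of primitive quadratic forms of discriminant $D$ (the appendix of \cite{quadratic-forms-ideal}). Composing the three bijections
\[
H^{+}(K)\;\xrightarrow{\ \sim\ }\;\{\text{forms of disc. } D\}/\Sl2(\mathbb{Z})\;\xrightarrow{\ \sim\ }\;\{\text{prim. hyperbolic, }N=\varepsilon_{D}^{2}\}/\!\sim\;\xrightarrow{\ \sim\ }\;\{\text{geodesics of length }\log(\varepsilon_{D}^{2})\}
\]
gives the asserted bijection. The only genuinely delicate points are the two compatibility checks flagged above --- that the $\pm I$-ambiguity matches on both sides of the quadratic-form/hyperbolic-element dictionary, and that the norm $\varepsilon_{D}^{2}$ is exactly the value distinguishing discriminant $D$ --- and both are already packaged into Proposition~\ref{quadratic forms- hyperbolic elemets}(iii), so no substantial obstacle remains; the group structure on $H^{+}(K)$ plays no role since only a bijection of sets is claimed.
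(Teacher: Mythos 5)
Your proposal is correct and follows exactly the route the paper intends: the theorem is stated there as an immediate consequence of composing the class-group/quadratic-forms bijection from \cite{quadratic-forms-ideal} with Proposition~\ref{quadratic forms- hyperbolic elemets} and the geodesics/conjugacy-classes dictionary, which is precisely the chain of bijections you assemble. Your version is simply a more careful write-up of the same argument, with the $\pm I$ and norm-versus-length compatibilities made explicit.
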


\begin{remark}
Alternatively, we could fix a set of primitive closed geodesics. In this case, whenever the quadratic form $[a,b,c]$ is in the same $\Sl2(\mathbb{Z})$-orbit as $[-a,-b,-c]$, the length of the geodesic would be $\log(\varepsilon_{D})$ instead of $2\log(\varepsilon_{D})$. However, thanks to the normalization in (\ref{eq: L2 norm explicit}), the restriction norm formula will not be affected. 
\end{remark}

\noindent We can now define the set
\[
\mathcal{C}_{D} \coloneqq \bigcup_{\gamma} \gamma
 \] 
where the union is taken over all closed geodesics $\gamma$ in $\Sl2(\mathbb{Z})\setminus \mathbb{H}$ of length $\log(\varepsilon_{D}^{2})$.

\subsection{Hecke characters}

In this paper, we will only be working with real quadratic number fields, and therefore, we define Hecke characters only in this particular context. For more details, we refer the reader to \cite{narkiewicz}, Chapter 7.
\begin{definition}[Hecke characters]
Let $K= \mathbb{Q}(\sqrt{D})$ be a real quadratic field with discriminant $D>0$ and let $\varepsilon_{D} >1 $ be a generator of $U^{+}(K)$. Consider a decomposition $H^{+}(K) \cong \mathbb{Z}/h_{1}\mathbb{Z} \oplus \hdots \oplus \mathbb{Z}/h_{s}\mathbb{Z}$ of $H^{+}(K)$ into cyclic groups and fix $J_{1},\hdots,J_{h}$ ideals in $K$ such that $[J_{i}]$ generates the cyclic factor $\mathbb{Z}/h_{i}\mathbb{Z}$ in $H^{+}(K)$.
Then, for any class group character $\psi$ and any $n\in \mathbb{Z}$ we define the Hecke character $\chi_{\psi,n}$ as
\[
\chi_{\psi,n}([\mathfrak{a}]) = \psi([\mathfrak{a}]) \Big| \frac{x}{x^{*}} \Big|^{\frac{\pi i n}{\log(\varepsilon_{D})}}
\]
for $\mathfrak{a} = xJ_{1}^{t_{1}}\hdots J_{s}^{t_{s}}$, $0\leq t_{i} < h_{i}$.
\end{definition}

\noindent Given a Hecke character $\chi_{\psi,m}$, we let $\theta_{\chi_{\psi,m}}$ be the corresponding theta function. This function is then a Maass form of level $D$, spectral parameter $\frac{ m \pi}{\log(\varepsilon_{D})}$ and nebentypus $\chi_{D}$, the Dirichlet character obtained using the Legendre symbol (for more details see \cite{bump}, Chapter 1.9). The $n$-th Fourier coefficient of this Maass form are given by
\[
\lambda_{\theta_{\chi_{\psi,m}}}(n) = \begin{cases} \frac{1}{2} \sum_{\mathbb{N}(\mathfrak{a})=|n|} \chi_{\psi,m}([\mathfrak{a}]) \\
\frac{1}{2i}  \sgn(n) \sum_{\mathbb{N}(\mathfrak{a})=|n|} \chi_{\psi,m}([\mathfrak{a}]).
\end{cases}
\]
Using the fact that Hecke characters are unitary, we can have the following estimate
\begin{lemma}
\label{fourier coef theta}
For a Hecke character $\chi$, we have
$
\lambda_{\theta_{\chi}}(n) \ll_{\varepsilon} |n|^{\varepsilon}$
for any $\varepsilon>0$.
\end{lemma}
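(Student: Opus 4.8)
The plan is to reduce the estimate to the standard bound on the number of integral ideals of given norm. From the two cases in the definition of $\lambda_{\theta_{\chi}}(n)$, the prefactors $\tfrac12$ and $\tfrac{1}{2i}\sgn(n)$ both have modulus $\leq 1$, so in either case
\[
|\lambda_{\theta_{\chi}}(n)| \leq \sum_{\mathbb{N}(\mathfrak{a}) = |n|} |\chi([\mathfrak{a}])|.
\]
Since $\chi$ is a Hecke character it is unitary, hence $|\chi([\mathfrak{a}])| = 1$ for every ideal $\mathfrak{a}$, and the right-hand side equals $r_{K}(|n|)$, where $r_{K}(m) \coloneqq \#\{\mathfrak{a}\subseteq \mathcal{O}_{K} : \mathbb{N}(\mathfrak{a}) = m\}$.

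It then remains to show $r_{K}(m) \ll_{\varepsilon} m^{\varepsilon}$. The arithmetic function $r_{K}$ is multiplicative, because ideals of $\mathcal{O}_{K}$ factor uniquely into prime ideals and the ideal norm is multiplicative; so it suffices to bound $r_{K}(p^{k})$ for each rational prime $p$ and each $k\geq 1$. As $K$ is quadratic, $p$ is split, inert, or ramified, and in every case the prime ideals above $p$ have norm $p$ or $p^{2}$; a short case check then gives $r_{K}(p^{k}) \leq k+1$. Consequently $r_{K}(m) \leq \prod_{p^{k}\,\|\,m}(k+1) = d(m)$, the ordinary divisor function, and the classical estimate $d(m) \ll_{\varepsilon} m^{\varepsilon}$ completes the proof. (Equivalently, one may read $r_{K}(m) \ll_{\varepsilon} m^{\varepsilon}$ off the Euler product of the Dedekind zeta function $\zeta_{K}(s)$, which converges absolutely in the same half-plane as $\zeta(s)^{2}$.)

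There is no genuine obstacle here: beyond the unitarity of $\chi$, the only input is the elementary divisor-type bound, which is entirely standard. The only point requiring a moment's care is the bookkeeping of the scalar prefactors in the definition of $\lambda_{\theta_{\chi}}(n)$, which, having modulus at most $1$, do not affect the estimate.
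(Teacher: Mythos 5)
Your proof is correct and matches the paper's intended argument: the paper offers no written proof beyond the remark that Hecke characters are unitary, which is precisely your reduction to the ideal-counting function $r_{K}(|n|)$ and the standard divisor-type bound $r_{K}(m)\ll_{\varepsilon}m^{\varepsilon}$ (a bound the paper itself invokes later in the form $\sum_{\mathbb{N}(\mathfrak{a})=n}1\ll_{\varepsilon}n^{\varepsilon}$). No issues.
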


\subsection{L-functions}
\label{rankin-selberg section}

 From now on, all Maass forms are assumed to be Hecke-Maass cusp forms. Given a Maass form $g$ and $n\in \mathbb{Z}$ we denote the $n$-th Fourier coefficient of $g$ by $\lambda_{g}(n)$.
\newline 
\newline 
In this paper we will be working with two types of L-functions: the Rankin-Selberg convolution $L(s,f\times \theta_{\chi})$ where $f$ is a Maass form of level 1 and $\chi$ is a Hecke character, and the adjoint square L-function $L(1,\Ad f)$ where $f$ is once again a Maass form. For the definitions and properties of these L-functions, we refer the reader to \cite{cornut-vatsal} (Section 1.1) and \cite{IwaniecKowalski} (p. 136--138). We briefly state a few results we will need in the proof of the main theorem.

\begin{definition}
\label{fancy gamma function}
For any $s\in \mathbb{C}$ and $x \in \mathbb{R}$ we define the function
\[
\gamma(s,f,x) \coloneqq  \pi^{-2s} \prod_{\pm} \Gamma\Big(\frac{s +i(t\pm\frac{\pi x}{\log(\varepsilon_{D})})}{2}\Big)\Gamma\Big(\frac{s -i(t\pm\frac{\pi x}{\log(\varepsilon_{D})})}{2}\Big).
\]
\end{definition}
\noindent When the Maass forms $f$ and $\theta_{\chi_{\psi,n}}$ have the same parity we have $ \gamma(s,f,n) = \gamma(s,f\times \theta_{\chi_{\psi,n}})$.

\begin{lemma}
\label{qinfty bounds}
Let $f$ and $g$ be two Maass forms with respective spectral parameters $t$ and $r$, both real and suppose that $f$ and $g$ have the same parity. Then, for $s\in \mathbb{R}$ fixed, 
\[
\mathfrak{q}_{\infty}(s,f\times g) \asymp_{s} (1+(t+r)^{2})(1+(t-r)^{2})
\]
where $\mathfrak{q}_{\infty}(s,f\times g)$ is the analytic conductor of the L-function $L(s,f\times g)$.
\end{lemma}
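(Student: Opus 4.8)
The plan is to read off the archimedean factor directly from the gamma factor of the Rankin--Selberg $L$-function and then estimate the product of shifted $\Gamma$-functions using Stirling. Recall that for two Maass forms $f,g$ of the same parity with spectral parameters $t,r\in\mathbb{R}$, the completed $L$-function $\Lambda(s,f\times g)$ has archimedean factor built out of four $\Gamma$-factors with shifts $i(t+r)$, $i(t-r)$, $-i(t+r)$, $-i(t-r)$ (this is precisely the structure recorded in Definition \ref{fancy gamma function} with $x$ chosen so that $\tfrac{\pi x}{\log(\varepsilon_D)}=r$, matching the remark that $\gamma(s,f,n)=\gamma(s,f\times\theta_{\chi_{\psi,n}})$). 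The analytic conductor is then, by the standard definition (as in \cite{IwaniecKowalski}, p.~94--95),
\[
\mathfrak{q}_{\infty}(s,f\times g) \;=\; \prod_{\pm}\Bigl(1+\bigl|s+i(t+r)\bigr|\Bigr)\Bigl(1+\bigl|s+i(t-r)\bigr|\Bigr)
\]
up to the normalization conventions, with $s\in\mathbb{R}$ fixed. Each factor $1+|s\pm i(t\pm r)|$ is $\asymp_s 1+|t\pm r|\asymp_s (1+(t\pm r)^2)^{1/2}$, and multiplying the four of them (two with $t+r$, two with $t-r$) yields $\asymp_s (1+(t+r)^2)(1+(t-r)^2)$, which is the claim.

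First I would fix the precise definition of $\mathfrak{q}_\infty$ being used, pointing to the reference, so that the shifts appearing in the Rankin--Selberg gamma factor are unambiguous. Then I would note that the relevant gamma factor is $\gamma(s,f,x)$ of Definition \ref{fancy gamma function} evaluated at the value of $x$ corresponding to $g$ having spectral parameter $r$, i.e. $\tfrac{\pi x}{\log \varepsilon_D}=r$; the four arguments of the $\Gamma$'s are $\tfrac12(s\pm i(t+ r))$ and $\tfrac12(s\pm i(t- r))$. Next I would invoke the elementary inequalities $1+|a+ib|\asymp 1+|b|$ for fixed real $a$ and $1+|b|\asymp (1+b^2)^{1/2}$, apply them to $b=t+r$ (twice) and $b=t-r$ (twice), and take the product. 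The implied constants depend only on $s$ (through the fixed real part), as asserted.

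The only mild subtlety — and the place a careful writer should pause — is bookkeeping of the normalization: different sources define the analytic conductor of a Rankin--Selberg $L$-function with the factor-of-$2$ in the $\Gamma$-arguments either kept or absorbed, and with $\mathfrak{q}_\infty$ defined as a product over the $\Gamma$-factors or as its square root. Since the statement is only an asymptotic $\asymp_s$ and $s$ is fixed, all such discrepancies are harmless: they change the implied constant but not the shape $(1+(t+r)^2)(1+(t-r)^2)$. I would therefore simply fix one convention (the one in \cite{IwaniecKowalski}), carry out the two-line Stirling-type estimate, and remark that the parity hypothesis is what guarantees $f$ and $g$ pair up into a single $L$-function with exactly these four archimedean shifts rather than a product of pieces with different gamma factors.
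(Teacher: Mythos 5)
Your proposal is correct and matches the paper's treatment: the paper states this lemma without proof, deferring to the standard definition of the analytic conductor in \cite{IwaniecKowalski}, and your argument --- reading the four archimedean shifts $\pm i(t+r)$, $\pm i(t-r)$ off the gamma factor of Definition~\ref{fancy gamma function} and applying $1+|s+ib|\asymp_s (1+b^2)^{1/2}$ to each --- is exactly the intended (and essentially definitional) computation. One cosmetic remark: no Stirling-type estimate is actually needed here, since the analytic conductor is defined directly from the local parameters rather than from asymptotics of $\Gamma$.
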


\noindent Using Stirling's formula, we can estimate the gamma factors of the Rankin-Selberg  and adjoint square L-function as follows
\begin{lemma}
\label{gamma factor rankin-selberg}
Let $f$ and $g$ be Hecke-Maass cusp forms with real spectral parameters $t$ and $r$ respectively and suppose $f$ and $g$ have the same parity. For $s=1/2$, we have 
\begin{equation}
\gamma\Big(\frac{1}{2},f\times g\Big) \ll  (1+|t+r|^{2})^{-\frac{1}{4}}(1+|t-r|^{2})^{-\frac{1}{4}} (e^{-\frac{\pi}{4}|t + r|})^{2}(e^{-\frac{\pi}{4}|t - r|})^{2}
\end{equation}
where the implied constant is absolute.
\end{lemma}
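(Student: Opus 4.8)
The plan is to unwind the definition and reduce to one uniform estimate for $\Gamma$ on the vertical line $\mathrm{Re}(z)=1/4$. By the identity following Definition~\ref{fancy gamma function} (applied with the index $x$ for which the spectral parameter of $g$ is $r=\pi x/\log(\varepsilon_{D})$), the gamma factor of the Rankin--Selberg L-function at $s=1/2$ is
\[
\gamma\Big(\tfrac12, f\times g\Big) = \pi^{-1}\prod_{\pm}\Gamma\Big(\tfrac14 + i\tfrac{t\pm r}{2}\Big)\Gamma\Big(\tfrac14 - i\tfrac{t\pm r}{2}\Big).
\]
Since $\Gamma(\bar z)=\overline{\Gamma(z)}$ and $t,r$ are real, the two factors in each $\pm$-pair are complex conjugates, so
\[
\gamma\Big(\tfrac12, f\times g\Big) = \pi^{-1}\,\Big|\Gamma\Big(\tfrac14 + i\tfrac{t+r}{2}\Big)\Big|^{2}\,\Big|\Gamma\Big(\tfrac14 + i\tfrac{t-r}{2}\Big)\Big|^{2}.
\]
In particular this quantity is a nonnegative real number, so there is no subtlety in interpreting the asserted upper bound.

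Next I would record the standard consequence of Stirling's formula on vertical lines: for $x$ in a fixed compact subset of $\mathbb{R}$ avoiding the non-positive integers one has $|\Gamma(x+iy)| = \sqrt{2\pi}\,|y|^{x-1/2}e^{-\pi|y|/2}\bigl(1+O(|y|^{-1})\bigr)$ as $|y|\to\infty$, uniformly in $x$, while $|\Gamma(x+iy)|$ is bounded above and below by positive constants for $|y|\le 1$. Combining these gives the clean two-sided bound
\[
\Big|\Gamma\Big(\tfrac14 + iy\Big)\Big| \asymp (1+|y|)^{-1/4}\,e^{-\pi|y|/2}, \qquad y\in\mathbb{R}.
\]
The only thing requiring a moment's care is uniformity across the transition near $y=0$, which is harmless here because $1/4$ is a fixed point bounded away from the poles of $\Gamma$.

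Finally I would substitute $y=(t+r)/2$ and $y=(t-r)/2$ into this estimate, square, and multiply, obtaining
\[
\gamma\Big(\tfrac12, f\times g\Big) \ll \Big(1+\tfrac{|t+r|}{2}\Big)^{-1/2}\Big(1+\tfrac{|t-r|}{2}\Big)^{-1/2}\, e^{-\frac{\pi}{2}|t+r|}\, e^{-\frac{\pi}{2}|t-r|}.
\]
The proof is then completed by the elementary comparisons $(1+\tfrac{|t\pm r|}{2})^{-1/2}\asymp (1+|t\pm r|)^{-1/2}\asymp (1+|t\pm r|^{2})^{-1/4}$ together with $e^{-\frac{\pi}{2}|t\pm r|}=\bigl(e^{-\frac{\pi}{4}|t\pm r|}\bigr)^{2}$, which recast the bound in exactly the form stated in the lemma. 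I do not expect any genuine obstacle: all the content lies in the uniform Stirling estimate of the second step, and everything else is bookkeeping. (The same computation in fact shows the bound is sharp up to the absolute constant, though only the upper bound is used later.)
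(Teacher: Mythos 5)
Your proof is correct and follows exactly the route the paper intends: the paper gives no written proof of this lemma beyond the phrase ``Using Stirling's formula,'' and your argument --- unwinding Definition~\ref{fancy gamma function} at $s=1/2$, pairing conjugate factors into $|\Gamma(\tfrac14+iy)|^2$, and applying the uniform Stirling bound $|\Gamma(\tfrac14+iy)|\asymp(1+|y|)^{-1/4}e^{-\pi|y|/2}$ with $y=(t\pm r)/2$ --- is precisely the standard computation being invoked. The final bookkeeping identifications $(1+\tfrac{|t\pm r|}{2})^{-1/2}\asymp(1+|t\pm r|^{2})^{-1/4}$ and $e^{-\frac{\pi}{2}|t\pm r|}=(e^{-\frac{\pi}{4}|t\pm r|})^{2}$ are correct, so nothing is missing.
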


\begin{lemma}
\label{gamma factor adjoint}
Let $f$ be a Hecke-Maass cusp form of level $q\geq 1$ with spectral parameter $t \in \mathbb{R}$. Then,
\[
    \gamma(1,\Ad f) \gg   e^{-\pi |t| }
\]
where the implied constant is absolute.
\end{lemma}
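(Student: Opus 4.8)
The plan is to write out the archimedean factor of the adjoint square $L$-function explicitly in terms of Gamma functions and then apply Stirling's asymptotic formula. For a Hecke--Maass cusp form $f$ of level $q$ with spectral parameter $t \in \mathbb{R}$, the completed adjoint square $L$-function has an archimedean factor of the shape
\[
\gamma(1,\Ad f) = \pi^{-3/2}\,\Gamma\!\Big(\tfrac{1}{2}\Big)\,\Gamma\!\Big(\tfrac{1}{2}+it\Big)\,\Gamma\!\Big(\tfrac{1}{2}-it\Big)
\]
up to a bounded power of $\pi$ (and up to whether $f$ is even or odd, which only shifts the shift parameters by a bounded amount and does not affect the claimed bound). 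So the first step is to fix the precise normalization from the references cited (\cite{cornut-vatsal}, \cite{IwaniecKowalski}), record that $\Gamma(1/2) = \sqrt{\pi}$ is a harmless absolute constant, and reduce the problem to lower-bounding $|\Gamma(1/2+it)\,\Gamma(1/2-it)| = |\Gamma(1/2+it)|^2$ since the two factors are complex conjugates.

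The second step is the standard evaluation $|\Gamma(1/2+it)|^2 = \frac{\pi}{\cosh(\pi t)}$, which follows from the reflection formula $\Gamma(z)\Gamma(1-z) = \pi/\sin(\pi z)$ applied with $z = 1/2+it$, using $\sin(\pi(1/2+it)) = \cos(\pi i t) = \cosh(\pi t)$. Then $\cosh(\pi t) \leq e^{\pi|t|}$ for all real $t$, so
\[
|\Gamma(1/2+it)|^2 = \frac{\pi}{\cosh(\pi t)} \geq \frac{\pi}{e^{\pi |t|}} = \pi\, e^{-\pi|t|}.
\]
Combining with the bounded constants pulled out in the first step gives $\gamma(1,\Ad f) \gg e^{-\pi|t|}$ with an absolute implied constant, as claimed. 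If one prefers to avoid the exact reflection-formula identity and argue uniformly in the parity/normalization, one can instead invoke Stirling in the form $|\Gamma(\sigma+it)| \asymp_{\sigma} |t|^{\sigma-1/2} e^{-\pi|t|/2}$ for $|t|$ bounded away from $0$ (and continuity/positivity on the compact range $|t| \leq 2$, recalling that the hypothesis $t \geq 2$ in Theorem~A means the small-$t$ regime never actually arises), which yields the same conclusion.

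There is essentially no obstacle here: the lemma is a soft consequence of Stirling's formula, and the only mild care needed is to match the normalization of $\gamma(1,\Ad f)$ to the source being cited and to handle the even/odd cases of $f$ uniformly. The result is stated as a lower bound precisely because it is used to control the denominator $\gamma(1,\Ad f)$ in the weighted sum of central $L$-values appearing in the sketch of proof of Theorem~A, where it combines with Lemma~\ref{gamma factor rankin-selberg} so that the ratio $\gamma(1/2,f\times\theta_{\chi_{\psi,n}})/\gamma(1,\Ad f)$ decays like $e^{-\frac{\pi}{2}|n|}$ once $|n| \gg t$.
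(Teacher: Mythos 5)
Your proof is correct. The paper itself offers no proof of this lemma --- it is stated as one of the background facts, with only the preceding remark ``Using Stirling's formula, we can estimate the gamma factors\dots'' indicating the intended method --- so the comparison is really between your argument and that one-line attribution. Your main route is, if anything, cleaner than Stirling: once the archimedean factor is written as an absolute constant times $\Gamma(\tfrac12+it)\Gamma(\tfrac12-it)$ (the adjoint/symmetric-square parameters at infinity being $\{0,\,2it,\,-2it\}$ up to a parity shift that only changes a bounded factor), the reflection formula gives the exact identity $|\Gamma(\tfrac12+it)|^{2}=\pi/\cosh(\pi t)\geq \pi e^{-\pi|t|}$, which is uniform in $t\in\mathbb{R}$ with no need to treat small $t$ separately; the Stirling fallback you sketch would require exactly the compactness argument you note. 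The only inaccuracy is in your closing contextual remark: combining this lemma with Lemma~\ref{gamma factor rankin-selberg} via $|t+r|+|t-r|=2\max\{|t|,|r|\}$ gives a ratio decaying like $e^{-\pi(\max\{|r|,|t|\}-|t|)}$ with $r=\pi n/\log(\varepsilon_{D})$, so the decay rate in $n$ is $e^{-\pi^{2}|n|/\log(\varepsilon_{D})}$-type (a $D$-dependent constant in the exponent), not $e^{-\frac{\pi}{2}|n|}$; this does not affect the proof of the lemma itself.
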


\noindent Finally, we conclude this section with a theorem by Hoffstein and Lockhart.
\begin{theorem}
\label{adjoint L-function lower bound}
Let $f$ be a Hecke-Maass cusp form of level $q=1$ and Laplace eigenvalue $\lambda$. For any $\varepsilon >0$, we have
\[
L(1,\Ad f) \gg_{\varepsilon} \lambda^{-\varepsilon}.
\]
\end{theorem}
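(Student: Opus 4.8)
The plan is to recognize this as the theorem of Hoffstein and Lockhart (with the crucial elimination of a Siegel zero supplied by the appendix of Goldfeld, Hoffstein and Lieman) and to reproduce its proof. The structural input I would use is that, by Shimura and Gelbart--Jacquet, $L(s,\Ad f)$ is the standard $L$-function of a self-dual cuspidal automorphic representation of $\mathrm{GL}_{3}/\mathbb{Q}$ (note that $f$ is not dihedral since $q=1$); in particular it is entire, satisfies a functional equation $s\mapsto 1-s$, has analytic conductor $\asymp(1+|t|)^{3}\asymp\lambda^{3/2}$, and $L(1,\Ad f)>0$ (the positivity follows from the Euler product of $L(s,f\times\tilde f)=\zeta(s)L(s,\Ad f)$, whose logarithm has non-negative Dirichlet coefficients). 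It then suffices to bound $L(1,\Ad f)$ from below, and I would split this into a classical part and the part special to $\Ad f$.

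For the classical part, a standard de la Vall\'ee Poussin positivity argument gives a zero-free region: there is an absolute $c>0$ with $L(s,\Ad f)\neq 0$ for $\Re s>1-c/\log\big(\lambda(2+|\Im s|)\big)$, with the possible exception of one simple real zero $\beta_{1}$. Integrating $-L'/L(s,\Ad f)$ from $s=1+1/\log\lambda$ down to $s=1$ and controlling the zeros via this region (equivalently, via the Hadamard factorization) then yields $L(1,\Ad f)\gg(1-\beta_{1})\,(\log\lambda)^{-A}$ for an absolute $A>0$, to be read as $\gg(\log\lambda)^{-A}$ if there is no exceptional zero. So it remains to show that a putative $\beta_{1}$ cannot be too close to $1$, say $1-\beta_{1}\gg(\log\lambda)^{-1}$.

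This last point is the heart of the matter, and the part where I expect all the difficulty to lie. Following Goldfeld--Hoffstein--Lieman, I would introduce the self-dual isobaric automorphic representation $\Pi\coloneqq\mathbf 1\boxplus\Ad f$ of $\mathrm{GL}_{4}/\mathbb{Q}$ and the Rankin--Selberg $L$-function
\[
D(s)\coloneqq L(s,\Pi\times\Pi)=\zeta(s)^{2}\,L(s,\Ad f)^{3}\,L(s,\mathrm{sym}^{4}f),
\]
which is a product of $\mathrm{GL}_{a}\times\mathrm{GL}_{b}$ ($a,b\in\{1,3\}$) Rankin--Selberg $L$-functions by Jacquet--Piatetski-Shapiro--Shalika and Shahidi. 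Two properties are decisive. First, $\log D(s)$ is a Dirichlet series with non-negative coefficients, so $-D'/D(\sigma)=\sum_{n}\Lambda_{D}(n)n^{-\sigma}$ with $\Lambda_{D}(n)\geq 0$. Second, since $\Pi$ has exactly two pairwise non-isomorphic cuspidal constituents, $D(s)$ is holomorphic on $\Re s\geq 1/2$ apart from a pole at $s=1$ of order exactly $2$ (in particular $L(s,\mathrm{sym}^{4}f)$, though not known to be automorphic, is holomorphic at $s=1$). Now if $\beta_{1}$ exists then $\zeta(\beta_{1})\neq 0$, so $D$ vanishes at $\beta_{1}$ to order $\geq 3$, which strictly exceeds the pole order $2$; inserting this into the explicit formula for $-D'/D$ and using the positivity of $\Lambda_{D}$, one obtains (effectively) $1-\beta_{1}\gg(\log Q_{D})^{-1}\gg(\log\lambda)^{-1}$, where $Q_{D}$ is the analytic conductor of $D$. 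The reason this succeeds for $\Ad f$ but fails for a real Dirichlet character is exactly that $\mathrm{sym}^{4}f$ is ``large'': it contributes no pole to $D(s)$, so the strict gap ``$3>2$'' is genuine rather than collapsing the way $L(s,\chi^{2})=\zeta(s)$ would.

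Combining the two parts gives $L(1,\Ad f)\gg(\log\lambda)^{-A-1}\gg_{\varepsilon}\lambda^{-\varepsilon}$, which is the assertion. The main obstacle, as indicated, is the third paragraph: it rests on the automorphy of $\mathrm{sym}^{2}f$ (Gelbart--Jacquet) and on the analytic theory of higher-rank Rankin--Selberg $L$-functions, and the decisive idea is the choice of the auxiliary $D(s)$ in which a hypothetical Siegel zero is forced to appear with excess multiplicity.
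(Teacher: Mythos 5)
The paper's proof of this theorem is a single citation to Hoffstein--Lockhart, and your proposal is a faithful reconstruction of precisely that argument (the zero-free region plus the Goldfeld--Hoffstein--Lieman appendix eliminating the Siegel zero), so the two approaches coincide. Two small points to tighten if you were to write this out in full: the analytic conductor of $L(s,\Ad f)$ is $\asymp (1+|t|)^{2}\asymp\lambda$ rather than $\lambda^{3/2}$ (harmless, since only its logarithm enters), and holomorphy of $D(s)$ at $\beta_{1}$ does not by itself give $\mathrm{ord}_{\beta_{1}}D\geq 3$ --- a pole of $L(s,\mathrm{sym}^{4}f)$ at $\beta_{1}$ could cancel one of the three zeros coming from $L(s,\Ad f)^{3}$ --- so one genuinely needs the holomorphy of $L(s,\mathrm{sym}^{4}f)$ itself in a real neighbourhood of $s=1$, which is exactly the input from Shahidi's work that Goldfeld--Hoffstein--Lieman invoke.
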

\begin{proof}
See \cite{hoffstein-lockhart}.
\end{proof}

\subsection{Approximate functional equation}
To prove Theorem A we will need to estimate the central values of the L-functions $L(s, f\times \theta_{\chi})$ i.e.\ the values $L(1/2, f\times \theta_{\chi})$ where $\theta_{\chi}$ is the theta function of some Hecke character $\chi$. Since $s=1/2$ lies in the critical strip, we will use the so called approximate functional equation.
For the proofs of the approximate functional equation and Lemma \ref{V bound lemma}, we refer the reader to \cite{IwaniecKowalski} pages 98 and 100 respectively.

\begin{theorem}[Approximate functional equation] 
\label{approximate functional equation}
Let $f,g$ be self-dual Maass cusp forms such that the completed L-function $
\Lambda(s,f\times g)$ is entire. Let $G(u)=e^{u^{2}}$,
keeping the notation of Section \ref{rankin-selberg section}, we have
\begin{align*}
L(1/2, f\times g) &= (1+ \varepsilon(f\times g))\sum_{m\geq 1} \frac{\chi_{f}(m)\chi_{g}(m)}{m}\sum_{n\geq 1}\frac{\lambda_{f}(n)\lambda_{g}(n)}{\sqrt{n}} V_{\frac{1}{2}}(\frac{m^{2}n}{\sqrt{q(f\times g)}}) 
\end{align*}
where
$
V_{s}(y) \coloneqq \frac{1}{2\pi i} \int_{\Re(u) = 3} \frac{\gamma(s+u,f\times g)}{\gamma(s,f\times g)}y^{-u}G(u) \frac{du}{u}
$ and $\varepsilon(f\times g)$ is the root number of $L(s,f\times g)$.
\end{theorem}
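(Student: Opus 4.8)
The plan is to run the classical contour-shift proof of the approximate functional equation, so I will only sketch the skeleton. Write $q=q(f\times g)$ for the arithmetic conductor and $\Lambda(s,f\times g)=q^{s/2}\gamma(s,f\times g)L(s,f\times g)$, so that the functional equation reads $\Lambda(s,f\times g)=\varepsilon(f\times g)\Lambda(1-s,\widetilde{f\times g})$; since $f$ and $g$ are self-dual we have $\widetilde{f\times g}=f\times g$, hence $\Lambda(s,f\times g)=\varepsilon(f\times g)\Lambda(1-s,f\times g)$ with $\varepsilon(f\times g)\in\{\pm1\}$. I will also use that, by expanding the Rankin--Selberg Euler product, $L(s,f\times g)=\sum_{N\ge1}a(N)N^{-s}$ with $a(N)=\sum_{m^{2}n=N}\chi_f(m)\chi_g(m)\lambda_f(n)\lambda_g(n)$ for $\Re(s)$ large; the double sum in the statement is precisely this Dirichlet series re-indexed by $N=m^{2}n$.

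First I would consider, for $\Re(s)$ large, the integral
\[
\mathcal{I}(s):=\frac{1}{2\pi i}\int_{\Re(u)=3}\frac{\Lambda(s+u,f\times g)}{q^{s/2}\,\gamma(s,f\times g)}\,G(u)\,\frac{du}{u}.
\]
On the line $\Re(u)=3$ the Dirichlet series for $L(s+u,f\times g)$ converges absolutely, so writing $\Lambda(s+u,f\times g)\big/\!\big(q^{s/2}\gamma(s,f\times g)\big)=q^{u/2}\,\frac{\gamma(s+u,f\times g)}{\gamma(s,f\times g)}\sum_{N}a(N)N^{-s-u}$ and interchanging the sum with the integral gives $\mathcal{I}(s)=\sum_{N\ge1}a(N)N^{-s}V_{s}\!\big(N/\sqrt{q}\,\big)$, with $V_{s}$ exactly the function in the statement (here one uses $(Nq^{-1/2})^{-u}=N^{-u}q^{u/2}$).

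Next I would shift the contour to $\Re(u)=-3$. By hypothesis $\Lambda(s,f\times g)$ is entire and $G(u)=e^{u^{2}}$ is entire, so the only singularity crossed is the simple pole of $u^{-1}$ at $u=0$, whose residue is $\Lambda(s,f\times g)\big/\!\big(q^{s/2}\gamma(s,f\times g)\big)\cdot G(0)=L(s,f\times g)$. This gives
\[
\sum_{N\ge1}\frac{a(N)}{N^{s}}V_{s}\!\big(N/\sqrt{q}\,\big)=L(s,f\times g)+\frac{1}{2\pi i}\int_{\Re(u)=-3}\frac{\Lambda(s+u,f\times g)}{q^{s/2}\,\gamma(s,f\times g)}\,G(u)\,\frac{du}{u}.
\]
Now I would specialize to $s=1/2$, apply the functional equation $\Lambda(1/2+u,f\times g)=\varepsilon(f\times g)\Lambda(1/2-u,f\times g)$ inside the last integral, and substitute $u\mapsto-u$; using that $G$ is even, the transformed integral equals $-\varepsilon(f\times g)\,\mathcal{I}(1/2)$. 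Feeding this back and solving the resulting linear relation for $L(1/2,f\times g)$ yields $L(1/2,f\times g)=\big(1+\varepsilon(f\times g)\big)\sum_{N}a(N)N^{-1/2}V_{1/2}\!\big(N/\sqrt{q}\,\big)$, which is the asserted identity once $a(N)$ is unfolded into the double sum over $m$ and $n$.

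The one delicate point, and the main obstacle, is justifying the contour shift: one has to check that $\Lambda(s+u,f\times g)G(u)$ contributes nothing on the horizontal segments $\Im(u)=\pm T$ as $T\to\infty$, and that the interchanges of summation and integration are legitimate. Both follow from the rapid decay $|G(u)|=e^{(\Re u)^{2}-(\Im u)^{2}}\ll e^{-T^{2}}$, which dominates the at-most-polynomial growth of $\gamma(s+u,f\times g)$ on vertical lines (Stirling) and of $L(s+u,f\times g)$ there (a convexity bound combined with the functional equation). Complete details are in \cite{IwaniecKowalski}, Theorem~5.3, whose argument applies here verbatim.
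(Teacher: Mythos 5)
Your sketch is the standard contour-shift argument of Iwaniec--Kowalski, Theorem 5.3, which is exactly what the paper invokes (it gives no independent proof, only the citation to \cite{IwaniecKowalski}), and the details you outline — the residue at $u=0$, the functional equation with $u\mapsto -u$, the evenness of $G$, and the unfolding of $a(N)$ into the $m,n$ double sum — are all correct. No issues.
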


\noindent Because of our choice of $G(u)$, $V_{s}(y)$ behaves essentially like a bump function with respect to $y$. More precisely,

\begin{lemma} 
\label{V bound lemma}
Fix $A>0$. Suppose $\Re(s+\kappa_{j}) \geq 3\alpha > 0$ for $j=1,\hdots,4$ where $\kappa_{j}$ are the local parameters of $L(s,f\times g)$ at infinity. Then, the function $V_{s}(y)$ defined in Theorem \ref{approximate functional equation} satisfies
\begin{equation}
\label{eq:V bound}
    V_{s}(y) \ll_{\alpha,A} \Big(1 + \frac{y}{\sqrt{\mathfrak{q}_{\infty}(s,f\times g)}}\Big)^{-A}.
\end{equation}
\end{lemma}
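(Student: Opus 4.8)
The plan is a standard contour shift. Set $X \coloneqq \sqrt{\mathfrak{q}_{\infty}(s,f\times g)}$. Since $G(u)=e^{u^{2}}$ is entire and decays like $e^{-(\Im u)^{2}}$ on vertical lines, the integrand of $V_{s}(y)$ is holomorphic in the region where $\Re(s+u+\kappa_{j})>0$ for every $j$, its only pole there being a simple pole at $u=0$ coming from $\tfrac{1}{u}$, with residue $\tfrac{\gamma(s,f\times g)}{\gamma(s,f\times g)}\,G(0)=1$. The one analytic ingredient is a Stirling estimate for the ratio of gamma factors on vertical lines: for $\Re(u)=\sigma$ in a fixed compact set, $\Re(s+\kappa_{j})$ bounded, and $\Re(s+u+\kappa_{j})\geq\alpha>0$,
\[
\left|\frac{\gamma(s+u,f\times g)}{\gamma(s,f\times g)}\right|\ll_{\sigma,\alpha}\mathfrak{q}_{\infty}(s,f\times g)^{\sigma/2}\,e^{C|\Im u|}
\]
with $C$ absolute. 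I would get this from $|\Gamma(x+iy)|\asymp_{x}(1+|y|)^{x-1/2}e^{-\frac{\pi}{2}|y|}$, applied factor by factor to $\Gamma(\tfrac{s+\kappa_{j}+u}{2})/\Gamma(\tfrac{s+\kappa_{j}}{2})$: since $\Re(\tfrac{s+\kappa_{j}}{2})$ is bounded one has $1+|\tfrac{s+\kappa_{j}}{2}|\asymp 1+|\Im(s+\kappa_{j})|$, the product of these over $j$ is $\asymp\mathfrak{q}_{\infty}(s,f\times g)$ (cf.\ Lemma \ref{qinfty bounds}), and every remaining dependence on $\Im u$ is at worst an $e^{C|\Im u|}$ factor.

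Granting this, suppose first $y\leq X$. I shift the contour in Theorem \ref{approximate functional equation} from $\Re(u)=3$ to $\Re(u)=-\alpha$; the hypothesis $\Re(s+\kappa_{j})\geq 3\alpha$ ensures $\Re(s+u+\kappa_{j})\geq 2\alpha>0$ on the whole strip $-\alpha\leq\Re(u)\leq 3$, so the only pole crossed is at $u=0$ and
\[
V_{s}(y)=1+\frac{1}{2\pi i}\int_{\Re(u)=-\alpha}\frac{\gamma(s+u,f\times g)}{\gamma(s,f\times g)}\,y^{-u}G(u)\,\frac{du}{u}.
\]
On the new line $|y^{-u}|=y^{\alpha}$, the gamma ratio is $\ll_{\alpha}X^{-\alpha}e^{C|\Im u|}$, and $G(u)$ absorbs both this exponential and the $\tfrac{1}{u}$, so the integral is $\ll_{\alpha}(y/X)^{\alpha}\leq 1$; hence $V_{s}(y)=1+O_{\alpha}((y/X)^{\alpha})\ll_{\alpha,A}(1+y/X)^{-A}$ because $(1+y/X)^{-A}\asymp_{A}1$ in this range. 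Now suppose $y>X$. We may assume $A\geq 3$ (enlarging $A$ only strengthens the desired bound when $y>X$, and the range $y\leq X$ is already done), shift the contour instead to $\Re(u)=A$, which crosses no pole, and estimate directly using $|y^{-u}|=y^{-A}$ together with the gamma ratio $\ll_{A}X^{A}e^{C|\Im u|}$ to get $V_{s}(y)\ll_{A}(y/X)^{-A}\asymp(1+y/X)^{-A}$. In both shifts the horizontal segments at height $\pm iT$ tend to $0$ as $T\to\infty$ thanks to the $e^{-T^{2}}$ decay of $G$.

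The delicate point --- the main obstacle --- is the uniformity of the Stirling estimate: one must check that the conductor enters with exactly the exponent $\Re(u)/2$ even when $|\Im u|$ is comparable to, or exceeds, the $|\Im(s+\kappa_{j})|$, the regime where the naive pointwise bound on each Gamma quotient is lossy; the resolution is that any loss there is polynomial or $e^{C|\Im u|}$ in size, which $G(u)=e^{u^{2}}$ more than compensates. One also needs the shifted contours to stay a fixed distance from the poles of $\Gamma$, which is precisely what $\Re(s+\kappa_{j})\geq 3\alpha$ guarantees. Everything else is routine bookkeeping.
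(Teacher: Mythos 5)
Your argument is correct and is essentially the standard proof of this lemma (the paper gives no proof of its own, deferring to Iwaniec--Kowalski, p.~100, whose Proposition 5.4 is proved by exactly this contour shift: to $\Re(u)=-\alpha$ past the simple pole at $u=0$ when $y\leq\sqrt{\mathfrak{q}_{\infty}}$, and to $\Re(u)=A$ when $y>\sqrt{\mathfrak{q}_{\infty}}$, with the Stirling bound $\gamma(s+u)/\gamma(s)\ll\mathfrak{q}_{\infty}^{\Re(u)/2}e^{C|\Im u|}$ and $G(u)=e^{u^{2}}$ absorbing the exponential growth). You also correctly identify the hypothesis $\Re(s+\kappa_{j})\geq 3\alpha$ as what keeps the shifted contour away from the poles of the Gamma factors, so there is nothing to add.
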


\noindent Using the fast decay of $V_{\frac{1}{2}}(y)$, we can ignore all terms in the approximate functional equation which are large compared to the conductor $\mathfrak{q}_{\infty}(1/2,f\times g)$.

\begin{corollary}
\label{approximate functional equation corollary}
Let $f,g$ be self-dual Maass cusp forms with respective Laplace eigenvalues $t,r \in \mathbb{R}$. Assume that the completed L-function $\Lambda(s,f\times g)$ is entire and the root number $\varepsilon(f\times g) =1$. Then, for any $B, c, \delta >0$ and $M\geq c.\mathfrak{q}_{\infty}(1/2, f\times g)^{\frac{1}{2}+\delta} $ we have
\begin{equation*}
    L(1/2, f\times g) = 2\sum_{m= 1}^{M} \frac{\chi_{f}(m)\chi_{g}(m)}{m}\sum_{n= 1}^{M} \frac{\lambda_{f}(n)\lambda_{g}(n)}{\sqrt{n}} V_{
    \frac{1}{2}}(\frac{m^{2}n}{\sqrt{q(f\times g)}}) + O_{c,B,\delta,q }(\mathfrak{q}_{\infty}(1/2,f\times g)^{-B})
\end{equation*}
where $V_{s}(y)$ is the function described in Lemma \ref{V bound lemma} and the implied constant depends only on $c$, $B$, $\delta$ and $q\coloneq q(f\times g)$.
\end{corollary}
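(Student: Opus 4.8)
The plan is to derive the corollary from the exact approximate functional equation of Theorem~\ref{approximate functional equation} by truncating the two sums and controlling the tails via the rapid decay of $V_{1/2}$ recorded in Lemma~\ref{V bound lemma}. Since $\Lambda(s,f\times g)$ is entire and $\varepsilon(f\times g)=1$, that theorem gives
\[
L(1/2,f\times g)=2\sum_{m\geq 1}\frac{\chi_{f}(m)\chi_{g}(m)}{m}\sum_{n\geq 1}\frac{\lambda_{f}(n)\lambda_{g}(n)}{\sqrt{n}}\,V_{1/2}\!\Big(\frac{m^{2}n}{\sqrt{q(f\times g)}}\Big),
\]
so the corollary reduces to showing that the portion of this double sum with $\max(m,n)>M$ is $O_{c,B,\delta,q}(\mathfrak{q}_{\infty}^{-B})$, where I abbreviate $q=q(f\times g)$ and $\mathfrak{q}_{\infty}=\mathfrak{q}_{\infty}(1/2,f\times g)$.

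First I would check that Lemma~\ref{V bound lemma} applies at $s=1/2$: since $f$ and $g$ have real spectral parameters $t,r$, the archimedean parameters of $L(s,f\times g)$ are $\pm i(t\pm r)$, purely imaginary, so $\Re(1/2+\kappa_{j})=1/2$ and the hypothesis holds with $\alpha=1/6$ (should one allow an exceptional eigenvalue for $f$, the Kim--Sarnak bound \cite{kim-sarnak-bound} still gives $\Re(1/2+\kappa_{j})\geq 1/2-7/64>0$). This yields $V_{1/2}(m^{2}n/\sqrt{q})\ll_{A}(m^{2}n/\sqrt{q\mathfrak{q}_{\infty}})^{-A}$ for every $A>0$. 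For the coefficients I would use $|\chi_{f}(m)\chi_{g}(m)|\leq 1$ (these are the nebentypus characters), $\lambda_{g}(n)=\lambda_{\theta_{\chi}}(n)\ll_{\varepsilon}n^{\varepsilon}$ by Lemma~\ref{fourier coef theta}, and $\lambda_{f}(n)\ll_{\varepsilon}n^{7/64+\varepsilon}$ by \cite{kim-sarnak-bound}, the implied constants being independent of the spectral parameters; in fact any bound $\lambda_{f}(n)\ll n^{\vartheta+\varepsilon}$ with a fixed $\vartheta\leq 1/2$ would do, since only convergence of the resulting series is at stake.

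Next I would split the tail according to $m>M$ (so that $m^{2}n\geq M^{2}$) and $m\leq M$, $n>M$ (so that $m^{2}n\geq M$); in both ranges the bound on $V_{1/2}$ applies together with the above lower bound on $m^{2}n$. Estimating the coefficients as indicated and carrying out the $m$- and $n$-sums (which converge once $A>1$) leads to a tail estimate of the form $\ll_{A}(q\mathfrak{q}_{\infty})^{A/2}M^{1/2-A+7/64+\varepsilon}$, the second range being dominant. Inserting $M\geq c\,\mathfrak{q}_{\infty}^{1/2+\delta}$ and using $q=q(f\times g)=O_{D}(1)$ turns this into $\ll_{A,c,q}\mathfrak{q}_{\infty}^{-\delta A+(1/2+\delta)(1/2+7/64+\varepsilon)}$. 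Choosing $A=A(B,\delta)$ large enough makes the exponent at most $-B$, which is exactly the asserted bound, with an implied constant depending only on $c$, $B$, $\delta$, and $q$.

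I do not expect a genuine obstacle, as the corollary merely truncates a rapidly convergent expansion. The only points needing care are verifying the hypothesis of Lemma~\ref{V bound lemma} at $s=1/2$ (done above, and robust to the quality of the Ramanujan-type input used) and the bookkeeping relating the arithmetic conductor $q(f\times g)$ appearing in Theorem~\ref{approximate functional equation}, the combined quantity $\sqrt{q\mathfrak{q}_{\infty}}$ that governs the effective support of $V_{1/2}$, and the purely archimedean conductor $\mathfrak{q}_{\infty}(1/2,f\times g)$ in the statement; this causes no difficulty because $q(f\times g)$ is bounded in terms of $D$ and so is absorbed into the implied constant.
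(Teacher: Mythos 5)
Your proof is correct and follows exactly the route the paper intends: the paper states this corollary with no written proof, indicating only that it follows from the fast decay of $V_{1/2}$ recorded in Lemma~\ref{V bound lemma}, and your truncation of the exact approximate functional equation with tail estimates in the two ranges $m>M$ and $m\leq M,\ n>M$ supplies precisely those missing details. The two points needing care --- verifying the hypothesis of Lemma~\ref{V bound lemma} at $s=1/2$ with purely imaginary archimedean parameters, and the bookkeeping between $q(f\times g)$, $\sqrt{q\,\mathfrak{q}_{\infty}}$, and $\mathfrak{q}_{\infty}(1/2,f\times g)$ --- are both handled correctly.
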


\begin{remark}
\label{remark: V def}
In case $f$ is a Maass form and $\theta_{\chi_{\psi,n}}$ is the theta function associated to the Hecke character $\chi_{\psi,n}$, $V_{s}(y)$ depends only on $n$ and not on $\psi$ (since the gamma factor depends only on $n$). Using the function $\gamma(s,f,x)$, we can define the function $V_{s}(y,x)$ in the obvious way so that $V_{s}(y,n) = V_{s}(y)$ whenever $n$ and $f$ have the same parity. 
\end{remark}

\noindent Combining the approximate functional equation, Lemma \ref{fourier coef theta} and Iwaniec's bound towards the Ramanujan conjecture from \cite{Iwaniec1992}, we have a uniform convexity bound for $L(1/2,f\times \theta_{\chi})$. More precisely,

\begin{proposition}[Convexity bound]
\label{convexity bound uniform}
Let $f$ be a Hecke-Maass cusp form of level 1 and spectral parameter $t$ and let $\chi \coloneqq \chi_{\psi,r}$ be a Hecke character for the quadratic number field $\mathbb{Q}(\sqrt{D})$. Then, for any $\varepsilon>0$ we have
\[
L(1/2,f\times \theta_{\chi_{\psi,r}}) \ll_{\varepsilon,D}  \mathfrak{q}_{\infty}(1/2,f\times \theta_{\chi_{\psi,r}})^{\frac{1}{4}+\varepsilon}.
\]
\end{proposition}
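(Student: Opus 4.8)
The plan is to combine the approximate functional equation (Corollary \ref{approximate functional equation corollary}) with crude term-by-term bounds on the Fourier coefficients, picking up a small saving only from the two places where non-trivial input is available, namely the bound towards Ramanujan. First I would verify that the hypotheses of Corollary \ref{approximate functional equation corollary} apply: $f$ is self-dual (it is a level-$1$ Hecke-Maass cusp form), $\theta_{\chi}$ is self-dual since $\chi$ and $\overline{\chi}$ give rise to the same theta series up to the Galois action, $\Lambda(s, f\times\theta_{\chi})$ is entire because $f$ is cuspidal and not a twist related to $\theta_\chi$, and the root number is $+1$ in the relevant case (the parity matching is already built into Remark \ref{remark: V def}). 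Then, taking $M \asymp_{D,\varepsilon} \mathfrak{q}_\infty(1/2, f\times\theta_\chi)^{1/2+\varepsilon}$, we have
\[
L(1/2, f\times\theta_\chi) \ll \sum_{m=1}^{M} \frac{|\chi_f(m)||\chi_g(m)|}{m}\sum_{n=1}^{M} \frac{|\lambda_f(n)||\lambda_{\theta_\chi}(n)|}{\sqrt{n}}\, \Big|V_{1/2}\Big(\tfrac{m^2 n}{\sqrt{q}}\Big)\Big| + O_{D,\varepsilon,B}(\mathfrak{q}_\infty^{-B}),
\]
so it remains to estimate the main sum.

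The key step is the estimate of the double sum. By Lemma \ref{fourier coef theta}, $|\lambda_{\theta_\chi}(n)| \ll_\varepsilon n^\varepsilon$, which is essentially the Ramanujan bound for the (non-cuspidal-in-the-relevant-sense, or dihedral) form $\theta_\chi$. For $\lambda_f(n)$, which is the only place the Ramanujan obstruction enters, I would invoke Iwaniec's bound from \cite{Iwaniec1992}, giving $|\lambda_f(n)| \ll_\varepsilon n^{\theta+\varepsilon}$ for a suitable $\theta < 1/2$ — though since the statement of Proposition \ref{convexity bound uniform} only claims the exponent $1/4+\varepsilon$, even the trivial Rankin-Selberg bound $\sum_{n\le X}|\lambda_f(n)|^2 \ll_\varepsilon X^{1+\varepsilon}$ together with Cauchy-Schwarz would suffice. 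Concretely, I would use Cauchy-Schwarz on the $n$-sum:
\[
\sum_{n=1}^{M} \frac{|\lambda_f(n)||\lambda_{\theta_\chi}(n)|}{\sqrt{n}} \ll_\varepsilon M^\varepsilon \Big(\sum_{n\le M}\frac{|\lambda_f(n)|^2}{n}\Big)^{1/2}\Big(\sum_{n\le M}\frac{1}{n}\Big)^{1/2} \ll_\varepsilon M^\varepsilon,
\]
using that $\sum_{n\le M}|\lambda_f(n)|^2/n \ll_\varepsilon M^\varepsilon$ from the Rankin-Selberg convolution having a simple pole at $s=1$ with residue $\asymp_\varepsilon \mathfrak q_\infty(1,\operatorname{Ad} f)^\varepsilon$ (cf. Theorem \ref{adjoint L-function lower bound}), and similarly $\sum_{m\le M}|\chi_f(m)\chi_g(m)|/m \ll_\varepsilon M^\varepsilon$ handled by the Hecke relations; the cutoff $V_{1/2}$ is harmless by Lemma \ref{V bound lemma}. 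Summing over $m$ as well contributes another $M^\varepsilon$, so the whole double sum is $\ll_{D,\varepsilon} M^\varepsilon \ll_{D,\varepsilon}\mathfrak q_\infty(1/2,f\times\theta_\chi)^\varepsilon$. But this is too strong — it would give subconvexity! — so in fact one must be careful: without any second-moment input for $\lambda_f(n)$ beyond the pointwise Iwaniec bound one gets $\sum_{n\le M}|\lambda_f(n)|/\sqrt n \ll M^{1/2+\theta+\varepsilon}$, and the weight $V_{1/2}$ localizes $m^2 n \ll \sqrt q$ so that after summing over $m$ and $n$ in the hyperbolic region one recovers $\ll q^{1/4+\varepsilon}$. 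This is the honest convexity computation and is what I would actually carry out.

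The main obstacle — or rather, the main thing to get right — is tracking the $D$-dependence uniformly and bounding the hyperbolic-region sum $\sum_{m^2 n \ll \sqrt q} |\lambda_f(n)|/\sqrt n$ cleanly. Writing $X = \sqrt q \asymp_D \mathfrak q_\infty(1/2, f\times\theta_\chi)^{1/2}$, the weight $V_{1/2}(m^2 n/X)$ decays rapidly once $m^2 n \gg X^{1+\delta}$, so
\[
\sum_{m\ge 1}\frac{|\chi_f(m)\chi_g(m)|}{m}\sum_{n\ge 1}\frac{|\lambda_f(n)||\lambda_{\theta_\chi}(n)|}{\sqrt n}\Big|V_{1/2}\Big(\tfrac{m^2 n}{X}\Big)\Big| \ll_{D,\varepsilon} X^\varepsilon \sum_{m\le X}\frac{1}{m^{1-\varepsilon}}\sum_{n\le X/m^2}\frac{n^{\theta+\varepsilon}}{\sqrt n} \ll_{D,\varepsilon} X^{1/2+\theta+\varepsilon},
\]
where I used $|\lambda_f(n)| \ll_\varepsilon n^{\theta+\varepsilon}$ with $\theta = 7/64 < 1/4$ (Kim-Sarnak / Iwaniec) so that the exponent $1/2+\theta < 1$ and the $m$-sum converges to a constant, absorbing any $D$-dependence of the constant $c$ in the truncation into the implied constant. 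Since $X^{1/2+\theta} \le X^{1/2} = \mathfrak q_\infty(1/2,f\times\theta_\chi)^{1/4}$, this yields
\[
L(1/2, f\times\theta_{\chi_{\psi,r}}) \ll_{\varepsilon, D} \mathfrak q_\infty(1/2, f\times\theta_{\chi_{\psi,r}})^{1/4+\varepsilon},
\]
which is the claimed convexity bound. I expect the only genuine subtlety to be confirming that all the hypotheses of the approximate functional equation hold for the pair $(f, \theta_\chi)$ — entirety of $\Lambda$, self-duality, and the normalization of the conductor $q(f\times\theta_\chi)$ versus the analytic conductor $\mathfrak q_\infty$ — and then keeping the constants' dependence on $D$ explicit throughout; the arithmetic input needed is minimal.
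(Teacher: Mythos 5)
Your overall strategy — approximate functional equation, $\lambda_{\theta_\chi}(n)\ll_\varepsilon n^{\varepsilon}$ from Lemma \ref{fourier coef theta}, and a pointwise bound towards Ramanujan for $\lambda_f$ — is exactly the route the paper's one-sentence sketch indicates, but your final step is wrong. From the effective length $m^{2}n\ll_{D}\mathfrak q_{\infty}^{1/2+\delta}$ your term-by-term estimate correctly yields
\begin{equation*}
\sum_{m\geq 1}\frac{1}{m}\sum_{n\ll X/m^{2}} n^{\theta-\frac{1}{2}+\varepsilon}\ \ll\ X^{\frac{1}{2}+\theta+\varepsilon}\ \asymp_{D}\ \mathfrak q_{\infty}(1/2,f\times\theta_{\chi})^{\frac{1}{4}+\frac{\theta}{2}+\varepsilon},
\end{equation*}
and the inequality you then invoke, $X^{1/2+\theta}\leq X^{1/2}$, is backwards: for $X\geq 1$ and $\theta>0$ one has $X^{1/2+\theta}\geq X^{1/2}$. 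So what you have actually proved is the weaker bound $\mathfrak q_{\infty}^{1/4+\theta/2+\varepsilon}$, not the stated $\mathfrak q_{\infty}^{1/4+\varepsilon}$. (That weaker, still polynomial, bound would in fact suffice for the only place the proposition is used, Proposition \ref{error term}, where the $L$-value is multiplied by an exponentially decaying gamma factor; but it does not prove the proposition as stated.)

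The irony is that you had the correct repair in hand and talked yourself out of it. Your displayed Cauchy--Schwarz is misapplied: writing $\sum_{n\leq M}|\lambda_f(n)|n^{-1/2}=\sum_{n\leq M}\bigl(|\lambda_f(n)|n^{-1/2}\bigr)\cdot 1$, the second factor must be $\bigl(\sum_{n\leq M}1\bigr)^{1/2}=M^{1/2}$, not $\bigl(\sum_{n\leq M}1/n\bigr)^{1/2}$; you placed the weight $1/n$ in both factors, which is why you appeared to obtain a Lindel\"of-quality bound and then (wrongly) concluded the method was inadmissible. Done correctly, Cauchy--Schwarz combined with the Rankin--Selberg second-moment bound $\sum_{n\leq M}|\lambda_f(n)|^{2}\ll_{\varepsilon}(Mt)^{\varepsilon}M$ (uniform in $t$ via $L(1,\Ad f)\ll_{\varepsilon}t^{\varepsilon}$, the companion to Theorem \ref{adjoint L-function lower bound}) gives $\sum_{n\leq M}|\lambda_f(n)\lambda_{\theta_\chi}(n)|n^{-1/2}\ll_{\varepsilon}M^{1/2+\varepsilon}$, which with $M\asymp_{D}\mathfrak q_{\infty}^{1/2+\delta}$ lands exactly on $\mathfrak q_{\infty}^{1/4+\varepsilon}$; alternatively, the convexity bound follows from Phragm\'en--Lindel\"of using absolute convergence on $\Re(s)=1+\varepsilon$ (which again needs the second-moment input rather than a pointwise Ramanujan bound). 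Either of these closes the gap; the pointwise bound $\lambda_f(n)\ll n^{\theta+\varepsilon}$ alone cannot.
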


\noindent We conclude this subsection with Proposition \ref{V derivative with respect to n} which roughly states that the derivatives of $V_{1/2}(y,n)$ with respect to $n$ are small. Intuitively, this implies that the approximate functional equation is more or less uniform in $n$ over small intervals. 

\begin{proposition}
\label{V derivative with respect to n}
Let $f$ be a Maass form with spectral parameter $t \geq 1$ and let $V_{s}(y,x)$ be defined as in Remark \ref{remark: V def}. Let $1\leq T \ll_{D} t $ and suppose that $\frac{T}{2}\leq |c_{D}t-x| \leq 2T$ with $c_{D} = \frac{\log(\varepsilon_{D})}{\pi}$. Then, for any $j\geq 1$ we have
\[
\frac{\partial^{j}}{\partial x^{j}} V_{\frac{1}{2}}(y,x) \ll_{\varepsilon,j} y^{-\frac{\varepsilon}{4}} t^{\varepsilon}T^{-j}.
\]
\end{proposition}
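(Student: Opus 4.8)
The plan is to bound the derivatives $\partial_x^j V_{1/2}(y,x)$ by differentiating under the integral sign in the contour-integral representation of $V_s(y,x)$ and controlling the resulting logarithmic-derivative factors of the gamma quotient. Recall from Remark \ref{remark: V def} and Theorem \ref{approximate functional equation} that
\[
V_{1/2}(y,x) = \frac{1}{2\pi i}\int_{\Re(u)=3} \frac{\gamma(1/2+u,f,x)}{\gamma(1/2,f,x)}\, y^{-u} G(u)\,\frac{du}{u},
\]
with $G(u)=e^{u^2}$. The $x$-dependence sits entirely in the gamma factors, so differentiating $j$ times in $x$ produces, inside the integral, the quotient $\gamma(1/2+u,f,x)/\gamma(1/2,f,x)$ multiplied by a polynomial in the quantities $\partial_x^k \log\gamma(1/2+u,f,x)$ and $\partial_x^k\log\gamma(1/2,f,x)$ for $1\le k\le j$ (Fa\`a di Bruno / logarithmic-derivative bookkeeping). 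Since $\gamma(s,f,x)$ is a product of four Gamma functions whose arguments are linear in $x$ with slope $\pm\frac{\pi}{2\log(\varepsilon_D)}$, each such log-derivative is, up to the constant $c_D^{-1}=\pi/\log(\varepsilon_D)$, a sum of values of the polygamma functions $\psi^{(k-1)}$ at the four shifted points $\tfrac{1}{2}(s \pm i(t\pm \frac{\pi x}{\log\varepsilon_D}))$ and their conjugates.

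Next I would exploit the hypothesis $\frac{T}{2}\le |c_Dt - x|\le 2T$. Writing $x = c_D t - \xi$ with $|\xi|\asymp T/c_D$, one of the two "$\pm$" combinations, say $t - \frac{\pi x}{\log\varepsilon_D} = t - \frac{x}{c_D}$, equals $\xi/c_D$, which has absolute value $\asymp T$; the other, $t + \frac{x}{c_D} \asymp t$, is large. The key asymptotic is that for $|\Im w|$ bounded away from the negative real axis and $|w|\to\infty$ one has $\psi^{(k-1)}(w)\ll_k |w|^{-(k-1)}$ for $k\ge 2$ and $\psi(w)=\log w + O(1/|w|)$ for $k=1$. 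Applying this with $w$ running over the shifted points on the contour $\Re(u)=3$: on the branch carrying $\xi/c_D$ the relevant argument has modulus $\gg 1+|\xi|\asymp 1+T/c_D \asymp T$ (using $T\ge 1$ and $T\ll_D t$), so each first $x$-derivative contributes a factor $\ll_D T^{-1}$ from the $k=1$ polygamma difference — here one must check that the two $\psi$'s appearing with opposite signs (from $\gamma(1/2+u,f,x)$ and $\gamma(1/2,f,x)$) combine to $O(|u|/T)$ rather than $O(\log T)$, which they do because $\psi(w+u/2)-\psi(w) = O(|u|/|w|)$, and the extra $|u|$ is absorbed by $G(u)=e^{u^2}$. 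Higher $x$-derivatives on this branch are even smaller. The net effect is $\partial_x^j$ contributing a bounded multiple of $T^{-j}$ (times a polynomial in $|u|$, harmless against $G$). The branch carrying $\asymp t$ contributes only constants and can be ignored for the $T^{-j}$ count since $T\ll_D t$ means $T^{-1}\gg_D t^{-1}$.

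The remaining factor is the undifferentiated quotient $|\gamma(1/2+u,f,x)/\gamma(1/2,f,x)|$ together with $|y^{-u}G(u)/u|$ on the line $\Re(u)=3$; bounding this is exactly the computation already behind Lemma \ref{V bound lemma} (shifting the contour and using Stirling), and it produces the decay in $y$. To get the stated shape $y^{-\varepsilon/4}t^\varepsilon$ I would, for the $y$-large regime, shift the contour to $\Re(u)=\varepsilon/4$ (picking up no poles since $G(0)=1$ is killed by... rather, the pole at $u=0$ is the main term, so instead shift only slightly and use that on $\Re(u)=\varepsilon/4$ the gamma quotient is $\ll t^{O(\varepsilon)}$ by Stirling while $y^{-u}=y^{-\varepsilon/4}$), and for $y$ bounded simply use $V_{1/2}(y,x)\ll_A (1+y/\sqrt{\mathfrak q_\infty})^{-A}$ together with $\mathfrak q_\infty \asymp t^2 T^2 \ll_D t^4$, trivially giving $y^{-\varepsilon/4}t^\varepsilon$ after absorbing constants. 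The main obstacle — and the step deserving the most care — is the cancellation in the $k=1$ polygamma difference on the "small" branch: one needs the uniform estimate $\psi(w+u/2)-\psi(w)\ll |u|/|w|$ valid for $w$ in the relevant sector and $|u|\le$ (the effective cutoff from $e^{u^2}$), ensuring the logarithmic terms do not survive and the genuine gain is $T^{-1}$ per derivative rather than a spurious $\log T$. Everything else is Stirling bookkeeping and the elementary observation that $1+|c_Dt-x|\asymp_D T$ under the stated hypotheses.
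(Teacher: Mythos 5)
Your proposal is correct and follows essentially the same route as the paper: shift the contour to $\Re(u)=\varepsilon/4$ to produce the $y^{-\varepsilon/4}t^{\varepsilon}$ factor, then differentiate under the integral sign and use Stirling/polygamma estimates to show that each $x$-derivative of the gamma quotient $\gamma(1/2+u,f,x)/\gamma(1/2,f,x)$ gains a factor $\asymp |u|/T$ (the cancellation $\psi(w+u/2)-\psi(w)\ll |u|/|w|$ you single out is exactly the point), with the polynomial growth in $|u|$ absorbed by $G(u)=e^{u^{2}}$. The paper makes your caveat about the "effective cutoff" explicit by splitting the contour integral at $|\Im(u)|=T/4$ and using the Gaussian decay of $G$ on the outer range, but this is the same argument.
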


\begin{proof}
Let $\delta \coloneqq \varepsilon/4$.
Using Cauchy's theorem we can move the integration in the definition of $V_{\frac{1}{2}}(y,x)$ to the line $\Re(s) = \delta$. Let $F(u, f, x) \coloneqq \frac{\gamma(1/2+u,f,x)}{\gamma(1/2,f,x)}$ so that
$
V_{\frac{1}{2}}(y,x) = I_{1}(y,x) + I_{2}(y,x)
$
where we define  
\[
I_{1}(y,x) \coloneqq \frac{1}{2\pi i} \int_{\substack{\Re(u)=\delta \\ |\Im(u)| \leq \frac{T}{4}}} F(u,f,x) y^{-u}e^{u^{2}} \frac{du}{u}
\text{ and }
I_{2}(y,x) \coloneqq \frac{1}{2\pi i} \int_{\substack{\Re(u)=\delta \\ |\Im(u)| \geq \frac{T}{4}}} F(u,f,x) y^{-u}e^{u^{2}} \frac{du}{u}.
\]
 Using Stirling's formula and estimates of the polygamma functions, we can prove that for $j\geq 0$ and $\Re(u) = \delta$ we have
$
\frac{\partial^{j}}{\partial x^j} F(u,f,x) \ll_{j,\delta,D} |u||F(u,f,x)| \ll_{j,\delta, D} |u| e^{2\pi |\Im(u)|} t^{4\delta}
$
and, if in addition $|\Im(u)| \leq \frac{T}{4}$, we then have
\[
\frac{\partial^{j}}{\partial x^j} F(u,f,x) \ll_{j,\delta,D} |u||F(u,f,x)|T^{-j} \ll_{j,\delta,D} T^{-j} |u|e^{2\pi |\Im(u)|} t^{4\delta} .
\]
As a consequence, $I_{1}(y,x)$ and $I_{2}(y,x)$ both satisfy the conditions for differentiation under the integral sign which leads to the estimates
\begin{align*}
    \frac{\partial^{j}}{\partial x^{j}} I_{1}(y,x) &\ll_{j,\delta,D} y^{-\delta} t^{4\delta} T^{-j} \int_{\substack{\Re(u)=\delta \\ |\Im(u)| \leq \frac{T}{4}}} e^{2\pi |\Im(u)|}e^{\delta^{2}-\Im(u)^{2}} \, du \ll_{j,\delta,D} y^{-\delta} t^{4\delta} T^{-j}
\end{align*}
and
\begin{align*}
     \frac{\partial ^{j}}{\partial x^{j}} I_{2}(y,x)
     &\ll_{j,\delta,D} y^{-\delta} t^{4\delta} e^{-\frac{T}{2}(T/8-\pi)} \int_{0}^{\infty} e^{2\pi w} e^{-w^{2}} \,dw \ll_{j.\delta,D} y^{-\delta} t^{4\delta} T^{-j}.
\end{align*}

\end{proof}

\subsection{Dyadic partition of unity}
Partitions of unity allow us to break down sums over $\mathbb{Z}$ into sums over smaller intervals which are sometimes easier to estimate (see \cite{blomer-michel-kowalski}, Section 2.8).
\begin{proposition}
\label{dyadic partition of unity}
There exists a smooth function $W(x)$ supported on $[\frac{1}{2},2]$ and such that
\[
\sum_{k\geq 0} W\Big(\frac{x}{2^{k}}\Big) = 1
\]
for any $x\geq 1$.
\end{proposition}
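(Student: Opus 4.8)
The plan is the standard telescoping construction. First I would fix a smooth ``ramp'' function $\eta\colon\mathbb{R}\to[0,1]$ with $\eta(x)=0$ for $x\le \tfrac12$ and $\eta(x)=1$ for $x\ge 1$. Such a function is classical: starting from $g(x)=e^{-1/x}$ for $x>0$ and $g(x)=0$ for $x\le 0$ (which is $C^\infty$), the function $h(x)=g(x)/\bigl(g(x)+g(1-x)\bigr)$ is smooth, vanishes on $(-\infty,0]$, and is identically $1$ on $[1,\infty)$; then $\eta(x)\coloneqq h(2x-1)$ has the required properties. This is the only place where any genuine work enters, and it is entirely routine.

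Next I would define
\[
W(x)\coloneqq \eta(x)-\eta\!\left(\frac{x}{2}\right).
\]
This is smooth as a difference of smooth functions. For the support: if $x<\tfrac12$ then $x/2<\tfrac12$ as well, so both terms vanish; if $x>2$ then $x/2>1$, so $\eta(x)=\eta(x/2)=1$ and $W(x)=0$. Hence $\operatorname{supp} W\subseteq[\tfrac12,2]$, as required.

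Finally I would establish the partition-of-unity identity by telescoping. For any fixed $x\ge 1$ and any $K\ge 0$,
\[
\sum_{k=0}^{K} W\!\left(\frac{x}{2^{k}}\right)
=\sum_{k=0}^{K}\left(\eta\!\left(\frac{x}{2^{k}}\right)-\eta\!\left(\frac{x}{2^{k+1}}\right)\right)
=\eta(x)-\eta\!\left(\frac{x}{2^{K+1}}\right).
\]
Since $W$ is supported on $[\tfrac12,2]$, the term $W(x/2^k)$ is nonzero only for the finitely many $k$ with $x/4\le 2^{k}\le 2x$, so the series $\sum_{k\ge0}W(x/2^k)$ is in fact a finite sum for each $x$; equivalently, taking $K$ large enough that $x/2^{K+1}\le\tfrac12$ gives $\eta(x/2^{K+1})=0$. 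Combined with $\eta(x)=1$ for $x\ge1$, this yields $\sum_{k\ge0}W(x/2^k)=1$ for all $x\ge1$, completing the proof. There is no real obstacle here; the statement is a soft, self-contained lemma whose only ingredient is the existence of a smooth cutoff.
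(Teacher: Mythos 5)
Your construction is correct and complete: the telescoping sum $\sum_{k=0}^{K}\bigl(\eta(x/2^{k})-\eta(x/2^{k+1})\bigr)$ collapses as you say, and the support and smoothness checks are right. The paper gives no proof of this proposition, deferring to the cited reference, and your argument is precisely the standard construction that reference uses, so there is nothing to compare beyond noting agreement.
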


\subsection{Lipschitz principle}
Finally, in the proof of Theorem A, we will need to estimate the number of lattice points contained inside a parallelogram in $\mathbb{R}^{2}$. The Lipschitz principle gives an upper bound with respect to the area of the parallelogram and the length of its boundary. 
\begin{proposition}
\label{lipschitz principle}
Let $R$ be a closed and bounded region in $\mathbb{R}^{2}$, $L$ the length of the boundary of $R$ and $A$ the area of $R$. Then,
\[
|R\cap \mathbb{Z}^{2}| \ll L + A+ 1.
\]
\end{proposition}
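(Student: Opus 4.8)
The plan is to split the lattice points of $R$ according to whether they sit well inside $R$ or close to its boundary $\partial R$, bounding the first group by the area $A$ and the second by the perimeter $L$. For $p \in \mathbb{Z}^{2}$ write $S_{p} \coloneqq p + [0,1)^{2}$ for the half-open unit square with corner $p$; these squares are pairwise disjoint and tile $\mathbb{R}^{2}$. Call a point $p \in R \cap \mathbb{Z}^{2}$ \emph{inner} if $S_{p} \subseteq R$ and \emph{boundary} otherwise. Since the squares $S_{p}$ are disjoint and each inner square lies in $R$, the union of the inner squares has area at most $A$; as each has area $1$, there are at most $A$ inner points.

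For the boundary points I would argue as follows. If $p \in R \cap \mathbb{Z}^{2}$ is not inner, then $S_{p}$ contains $p \in R$ as well as some point outside $R$, so, being connected and meeting both $R$ and its complement, $S_{p}$ must meet $\partial R$; here one uses that $R$ is closed, so $\mathbb{R}^{2} = \operatorname{int}(R) \sqcup \partial R \sqcup \operatorname{int}(R^{c})$ with the two interiors open, and a connected set avoiding $\partial R$ would be forced to lie entirely in one of them. As $\operatorname{diam}(S_{p}) = \sqrt{2}$, every boundary point lies within distance $\sqrt{2}$ of $\partial R$. To bound the number of lattice points in this $\sqrt{2}$-neighbourhood, parametrise $\partial R$ by arc length and cut it into at most $\lceil L \rceil + 1$ arcs, each of length $\le 1$ and hence of diameter $\le 1$; the $\sqrt{2}$-neighbourhood of each such arc is contained in a disc of radius $1 + \sqrt{2} < 3$, which meets $\ll 1$ lattice points with an absolute implied constant. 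Summing over the arcs gives $\ll L + 1$ boundary points.

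Adding the two contributions yields $|R \cap \mathbb{Z}^{2}| \le A + O(L+1) \ll L + A + 1$, as claimed. The proof is mostly bookkeeping; the two places that need a little care are the topological step that the unit square of a non-inner lattice point must cross $\partial R$, and organising the boundary covering so that the implied constant stays genuinely absolute — this is where one uses the rectifiability of $\partial R$, so that $L$ is meaningful, together with the uniform bound on the number of lattice points in a disc of fixed radius.
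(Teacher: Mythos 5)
Your proof is correct, but it is organised quite differently from the paper's: the paper does not prove this proposition at all, it simply cites the Lipschitz principle (Davenport) and Jarnik's inequality, whereas you supply a complete elementary argument. Your decomposition into \emph{inner} lattice points (whose half-open unit squares tile a subset of $R$, giving the bound $A$) and \emph{boundary} lattice points (whose squares must meet $\partial R$ by the connectedness argument, and which are then counted by chopping $\partial R$ into $\lceil L\rceil+1$ arcs of length at most $1$ and absorbing each into a disc of bounded radius) is exactly the standard proof of Jarnik's inequality, so what you have done is reprove the cited result rather than diverge from it mathematically. The one caveat worth flagging is that the proposition as stated is informal about what a ``region'' is: your covering step tacitly assumes $\partial R$ is a single rectifiable closed curve (or a union of boundedly many such components, since each component costs an extra arc); for a region whose boundary had many components of tiny total length the bound $\ll L+A+1$ could fail. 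This is not a real gap here, since the same hypothesis is implicit in the cited results and the paper only ever applies the proposition to parallelograms, where $\partial R$ is a single rectifiable Jordan curve. The benefit of your route is self-containedness and an explicit absolute constant; the cost is the extra topological bookkeeping, which you handle correctly (in particular the use of closedness of $R$ to place a point of $R^{c}$ in $\operatorname{int}(R^{c})$ is exactly the right observation).
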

\begin{proof}
This follows from the Lipschitz principle  \cite{lipschitz-principle} or Jarnik's inequality \cite{Jarnik-inequality}.
\end{proof}
\begin{corollary}
\label{lattice points parallelogram bounds}
Let $\Lambda$ be a lattice in $\mathbb{R}^{2}$ with (ordered) basis $\mathcal{B} = \{ \alpha_{1},\alpha_{2} \}$ and let $v_{1},v_{2} \in \mathbb{R}^{2}$ be two linearly independent vectors.
If we let $\mathcal{P}$ be the parallelogram
$
\mathcal{P} \coloneqq \{ av_{1} + bv_{2} \mid a,b \in [0,1] \} 
$
then
\[
| \mathcal{P} \cap \Lambda | \ll_{\mathcal{B}} (||v_{1}|| + ||v_{2}||) + ||v_{1}||.||v_{2}|| +1
\]
where the implied constant depends only on the basis $\mathcal{B}$ of $\Lambda$.
\end{corollary}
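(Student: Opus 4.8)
The plan is to reduce to the case $\Lambda=\mathbb{Z}^{2}$ by a linear change of variables depending only on $\mathcal{B}$, and then invoke Proposition \ref{lipschitz principle}. Let $T\colon \mathbb{R}^{2}\to\mathbb{R}^{2}$ be the unique linear isomorphism with $T\alpha_{1}=e_{1}$ and $T\alpha_{2}=e_{2}$, where $e_{1},e_{2}$ is the standard basis. Then $T$ is a bijection of $\mathbb{R}^{2}$ carrying $\Lambda$ bijectively onto $\mathbb{Z}^{2}$, so that $|\mathcal{P}\cap\Lambda| = |T(\mathcal{P})\cap\mathbb{Z}^{2}|$. Since $T$ is linear, $T(\mathcal{P})$ is exactly the parallelogram $\{aTv_{1}+bTv_{2}\mid a,b\in[0,1]\}$; it is closed and bounded, and $v_{1},v_{2}$ linearly independent gives $Tv_{1},Tv_{2}$ linearly independent, so Proposition \ref{lipschitz principle} applies to the region $R\coloneqq T(\mathcal{P})$.

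Next I would estimate the area $A$ and boundary length $L$ of $R$. The boundary of $R$ consists of four segments of lengths $\|Tv_{1}\|$ and $\|Tv_{2}\|$, so $L \leq 2(\|Tv_{1}\|+\|Tv_{2}\|) \leq 2\|T\|_{\mathrm{op}}(\|v_{1}\|+\|v_{2}\|)$, where $\|T\|_{\mathrm{op}}$ is the operator norm of $T$. For the area, $A = |\det T|\cdot|\det(v_{1}\mid v_{2})|$, and $|\det(v_{1}\mid v_{2})| = \|v_{1}\|\,\|v_{2}\|\,|\sin\angle(v_{1},v_{2})| \leq \|v_{1}\|\,\|v_{2}\|$, hence $A \leq |\det T|\,\|v_{1}\|\,\|v_{2}\|$. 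Both $\|T\|_{\mathrm{op}}$ and $|\det T|$ are finite quantities determined entirely by $\mathcal{B}=\{\alpha_{1},\alpha_{2}\}$ (indeed $T$ is the inverse of the matrix whose columns are $\alpha_{1},\alpha_{2}$), so they may be absorbed into the implied constant $\ll_{\mathcal{B}}$.

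Putting these together, Proposition \ref{lipschitz principle} yields
\[
|\mathcal{P}\cap\Lambda| = |R\cap\mathbb{Z}^{2}| \ll L + A + 1 \ll_{\mathcal{B}} (\|v_{1}\|+\|v_{2}\|) + \|v_{1}\|\,\|v_{2}\| + 1,
\]
which is the claimed bound. There is essentially no hard step here: the only thing to be checked with care is that the constants introduced by the change of variables $T$ depend solely on the basis $\mathcal{B}$ and not on $v_{1},v_{2}$, which is clear from the explicit description of $T$. Everything else is the routine transformation of Euclidean area and perimeter under an invertible linear map together with a direct application of the Lipschitz principle.
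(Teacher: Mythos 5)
Your argument is correct and is essentially identical to the paper's proof: both reduce to $\Lambda=\mathbb{Z}^{2}$ via the linear map $T$ sending $\mathcal{B}$ to the standard basis, bound the perimeter and area of the image parallelogram using the operator norm and determinant of $T$ (which depend only on $\mathcal{B}$), and then apply Proposition \ref{lipschitz principle}. No substantive difference.
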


\begin{proof}
Let us first assume that $\Lambda = \mathbb{Z}^{2}$. Then, by Proposition \ref{lipschitz principle} we know that
$
|\mathcal{P}\cap \mathbb{Z}^{2}| \ll L + A + 1 
$
where $L = 2(||v_{1}|| + ||v_{2}||)$ is the perimeter of $\mathcal{P}$ and $A = | v_{1} \times v_{2} |\leq ||v_{1}||.||v_{2}||$. The result follows immediately in this case.
\newline
\newline
Now let $\Lambda$ be any lattice and $\mathcal{B}$ be a fixed ordered basis. We reduce to the case above using an appropriate linear transformation. More precisely, let $T \colon \mathbb{R}^{2} \to \mathbb{R}^{2}$ be the linear isomorphism defined by
$
T(\alpha_{1}) = (1,0) \text{ and } T(\alpha_{2})= (0,1).
$
Note that $T$ only depends on our (ordered) basis $\mathcal{B}$, and therefore, so does its operator norm. In other words, we have
$
||Tv_{1}|| \ll_{\mathcal{B}} ||v_{1}|| \text{ and } ||Tv_{2}||\ll_{\mathcal{B}} ||v_{2}||.
$
In particular, since $T\mathcal{P}$ is the parallelogram
$
T\mathcal{P} = \{ a(Tv_{1}) + b(Tv_{2}) \mid a, b \in [0,1] \}
$,
we obviously have the estimate
\[
|T\mathcal{P} \cap \mathbb{Z}^{2} | \ll_{\mathcal{B}} \Big(||v_{1}||+||v_{2}||\Big) + ||v_{1}||.||v_{2}|| + 1.
\]
Finally, $T$ is an isomorphism and $T\Lambda = \mathbb{Z}^{2}$, hence
$|\mathcal{P} \cap \Lambda | = | T\mathcal{P} \cap \mathbb{Z}^{2} |$ which concludes the proof.
\end{proof}

\section{Proof of Theorem A}
\noindent We are finally ready to prove the main result. In what follows, we will let $f \in L^{2}(\Sl2(\mathbb{Z})\setminus\mathbb{H})$ be a Hecke-Maass cusp with spectral parameter $t\geq 2$.

\subsection{Restriction norm formula}
\label{sec: restriction norm formula}
We start by using representation theory of compact abelian groups (or equivalently, Fourier theory of compact abelian groups) to derive an explicit formula for the restriction norm $||f|_{\mathcal{C}_{D}}||_{2}^{2}$. 
\newline
\newline
Keeping the notation defined in Section \ref{sec: background material}, we consider a closed geodesic $\gamma$ in $\Sl2(\mathbb{Z})\setminus\mathbb{H}$ of length $\log(\varepsilon_{D}^{2})$ and a lift $\tilde{\gamma}$ of $\gamma$ in $\mathbb{H}$. Then $\tilde{\gamma}$ is a semi-circle perpendicular to the real axis with endpoints $\omega, \omega^{*} \in K \subseteq \mathbb{R}$ with $\omega > \omega^{*}$ (as usual $\omega^{*}$ denotes the Galois conjugate of $\omega$). Consider the matrix
\[
\kappa_{\gamma} \coloneqq \frac{1}{\sqrt{\omega - \omega^{*}}} \begin{pmatrix}
1& -\omega \\
1 &-\omega^{*} 
\end{pmatrix} \in \Sl2(\mathbb{R}).
\]
One easily checks that $\kappa_{\gamma}.\omega = 0$ and $\kappa_{\gamma}.\omega^{*}= \infty$, and therefore, $\kappa_{\gamma} \tilde{\gamma} = \gamma_{0}$ where $\gamma_{0} = i\mathbb{R}^{+}$ is the imaginary axis in the upper half plane. Since the hyperbolic measure $ds$ is invariant under the action of $\Sl2(\mathbb{R})$, for any $g\colon \tilde{\gamma} \to \mathbb{C}$ we have
$
\int_{\tilde{\gamma}} g(z) \, ds = \int_{0}^{\infty} g(\kappa_{\gamma}^{-1}.iy) \, \frac{dy}{y}.
$
For any $g\colon \gamma \to \mathbb{C}$, this leads to
\begin{equation}
\label{eq:parametrization imaginary axis}
    \int_{\gamma} g(z) \, ds = \int_{1}^{\varepsilon_{D}^{2}} g(\kappa_{\gamma}^{-1}.iy) \, \frac{dy}{y}.
\end{equation}
We can therefore express integration over $\gamma$ using the Lebesgue integral over $\mathbb{R}$. 
Introducing the change of variables $x\coloneqq \log(\varepsilon_{D}^{2})^{-1}\log(y)$ or equivalently, $y=e^{\log(\varepsilon_{D}^{2})x}$, (\ref{eq:parametrization imaginary axis}) becomes
\begin{equation}
\label{eq:parametrization circle}
    \int_{\gamma} g(z) \, ds =  \log(\varepsilon_{D}^{2}) \int_{0}^{1} g(\kappa_{\gamma}^{-1}. ie^{\log(\varepsilon_{D}^{2})x}) \, dx.
\end{equation}
By abuse of notation, we will denote the set $\{ iy \mid 1\leq y < \varepsilon_{D}^{2} \}$ by $\kappa_{\gamma} \gamma$. Geometrically, if we define a parametrization of $\kappa_{\gamma} \gamma$ as follows
\begin{align*}
    \Phi_{0} \colon [0,1) &\to \kappa_{\gamma} \gamma \\
     x &\mapsto ie^{\log(\varepsilon_{D}^{2})x}
\end{align*}
then, the right hand side of (\ref{eq:parametrization circle}) is the integral of the pullback $\Phi_{0}^{*}g(\kappa_{\gamma}^{-1} \cdot)$ along $[0,1)$. As a consequence, after canonically identifying $[0,1)$ to $\mathbb{R}/\mathbb{Z}$ (which we endow with the Haar probability), we have a homeomorphism between $\kappa_{\gamma} \gamma$ and a compact abelian group which is compatible with the measures on both spaces up to scaling. In particular, it preserves the corresponding $L^{2}$-inner products in the sense that 
\begin{equation}
    \langle g_{1},g_{2} \rangle = \log(\varepsilon_{D}^{2}) \langle \Phi_{0}^{*}g_{1}, \Phi_{0}^{*}g_{2} \rangle
\end{equation}
for every $g_{1},g_{2} \colon \kappa_{\gamma} \gamma \to \mathbb{C}$.
Of course, the inner product on the left hand side takes place in $L^{2}(\kappa_{\gamma} \gamma)$ while the inner product on the right hand side takes place in $L^{2}(\mathbb{R}/\mathbb{Z})$.
\newline
\newline
Using Theorem \ref{closed geodesics- class group} we can therefore parametrize the union of closed geodesics $\mathcal{C}_{D}$ using
\begin{align*}
    \Xi \colon \mathbb{R}/\mathbb{Z}\times H^{+}(K) &\to \mathcal{C}_{D} \\
    ([x], [\mathfrak{a}]) &\mapsto \kappa_{\gamma_{\mathfrak{a}}}^{-1}ie^{\log(\varepsilon_{D}^{2})x}
\end{align*} 
where $\gamma_{\mathfrak{a}}$ is the closed geodesic corresponding to the class group element $[\mathfrak{a}]$. When we endow the finite group $H^{+}(K)$ with the discrete topology, $\Xi$  is a homeomorphism which is compatible with integration on both spaces. Indeed, for any integrable $g\colon \mathcal{C}_{D} \to \mathbb{C}$ we have
\begin{align*}
    \int_{\mathcal{C}_{D}} g(z)\, ds &= \sum_{[\mathfrak{a}] \in H^{+}(K)} \int_{\gamma_{[\mathfrak{a}]}} g(z) \,ds \\
    &= \log(\varepsilon_{D}^{2})|H^{+}(K)| \int_{\mathbb{R}/\mathbb{Z} \times H^{+}(K)} \Xi^{*}g(z,[\mathfrak{a}]) \, d(z,[\mathfrak{a}])
\end{align*}
using the Fubini theorem and (\ref{eq:parametrization circle}). In terms of $L^{2}$ inner-products this translates to
\begin{equation}
\label{eq: inner products relation}
     \langle g_{1},g_{2} \rangle = |\mathcal{C}_{D}| \langle \Xi^{*}g_{1}, \Xi^{*}g_{2} \rangle
\end{equation}
for any $g_{1},g_{2} \colon \mathcal{C}_{D} \to \mathbb{C}$. We can therefore use Plancherel to express the $L^{2}$ norm of any $g \in L^{2}(\mathcal{C}_{D})$.
\begin{theorem}
For any $g \in L^{2}(\mathcal{C}_{D})$, we have
\begin{equation}
\label{eq: L2 norm explicit}
    ||g||_{2}^{2} = \frac{1}{ |\mathcal{C}_{D}|}\sum_{\psi\in \widehat{H^{+}(K)}} \sum_{n \in \mathbb{Z}} I(g,\chi_{\psi,n}).
\end{equation}
where $\chi_{\psi,n}$ is the Hecke character obtained from $\psi \in \widehat{H^{+}(K)}$, $n \in \mathbb{Z}$ and
\begin{equation}
    I(g,\chi_{\psi, n}) \coloneqq \Big| \sum_{[\mathfrak{a}] \in H^{+}(K)} \psi([\mathfrak{a}])^{-1} \int_{1}^{\varepsilon_{D}^{2}}g(\kappa_{\gamma_{[\mathfrak{a}]}}^{-1}iy) y^{-\frac{n\pi i }{\log(\varepsilon_{D})}} \, \frac{dy}{y} \Big|^{2}.
\end{equation}
\end{theorem}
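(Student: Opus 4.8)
The plan is to apply the abstract Plancherel theorem on the compact abelian group $\mathbb{R}/\mathbb{Z} \times H^{+}(K)$ to the pullback $\Xi^{*}g$, and then to translate the resulting Fourier coefficients back into the integral expressions $I(g,\chi_{\psi,n})$ using the explicit formula for $\Xi$ and the measure-compatibility identity (\ref{eq: inner products relation}). The key point is that the characters of $\mathbb{R}/\mathbb{Z} \times H^{+}(K)$ are exactly the functions $([x],[\mathfrak{a}]) \mapsto e^{2\pi i n x}\psi([\mathfrak{a}])$ for $n \in \mathbb{Z}$ and $\psi \in \widehat{H^{+}(K)}$, since the dual of a product is the product of duals and $\widehat{\mathbb{R}/\mathbb{Z}} \cong \mathbb{Z}$.

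First I would recall that for any $g \in L^{2}(\mathcal{C}_{D})$, equation (\ref{eq: inner products relation}) with $g_1 = g_2 = g$ gives $||g||_{2}^{2} = |\mathcal{C}_{D}| \cdot ||\Xi^{*}g||_{2}^{2}$, where the right-hand norm is taken in $L^{2}(\mathbb{R}/\mathbb{Z} \times H^{+}(K))$ with the product of the Haar probability measure on $\mathbb{R}/\mathbb{Z}$ and the normalized counting measure on $H^{+}(K)$. Next I would invoke Plancherel for compact abelian groups: $||\Xi^{*}g||_{2}^{2} = \sum_{\psi, n} |\widehat{\Xi^{*}g}(\psi, n)|^{2}$, where $\widehat{\Xi^{*}g}(\psi, n) = \langle \Xi^{*}g, e_{\psi,n}\rangle$ and $e_{\psi,n}([x],[\mathfrak{a}]) = e^{2\pi i n x}\psi([\mathfrak{a}])$. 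Then I would compute this inner product explicitly: unwinding the definition of $\Xi$ and the product measure,
\[
\widehat{\Xi^{*}g}(\psi,n) = \sum_{[\mathfrak{a}] \in H^{+}(K)} \frac{1}{|H^{+}(K)|}\,\overline{\psi([\mathfrak{a}])} \int_{0}^{1} g(\kappa_{\gamma_{[\mathfrak{a}]}}^{-1} i e^{\log(\varepsilon_D^2)x})\, e^{-2\pi i n x}\, dx.
\]
Finally I would substitute $y = e^{\log(\varepsilon_D^2)x}$, so that $dx = \frac{1}{\log(\varepsilon_D^2)}\frac{dy}{y}$ and $e^{-2\pi i n x} = y^{-\frac{2\pi i n}{\log(\varepsilon_D^2)}} = y^{-\frac{n\pi i}{\log(\varepsilon_D)}}$, and use $\overline{\psi([\mathfrak{a}])} = \psi([\mathfrak{a}])^{-1}$ since $\psi$ is unitary. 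This yields $\widehat{\Xi^{*}g}(\psi,n) = \frac{1}{|H^{+}(K)|\log(\varepsilon_D^2)} \sum_{[\mathfrak{a}]}\psi([\mathfrak{a}])^{-1}\int_1^{\varepsilon_D^2} g(\kappa_{\gamma_{[\mathfrak{a}]}}^{-1}iy) y^{-n\pi i/\log(\varepsilon_D)}\frac{dy}{y}$, so that $|\widehat{\Xi^{*}g}(\psi,n)|^{2} = (|H^{+}(K)|\log(\varepsilon_D^2))^{-2} I(g,\chi_{\psi,n})$.

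Assembling these pieces, $||g||_2^2 = |\mathcal{C}_D| \sum_{\psi,n} (|H^{+}(K)|\log(\varepsilon_D^2))^{-2} I(g,\chi_{\psi,n})$, and since $|\mathcal{C}_D| = |H^{+}(K)|\log(\varepsilon_D^2)$ by Theorem \ref{closed geodesics- class group} and (\ref{eq:parametrization circle}), the prefactor collapses to $|\mathcal{C}_D|^{-1}$, giving (\ref{eq: L2 norm explicit}). I do not anticipate a serious obstacle here — the statement is essentially a bookkeeping exercise once the homeomorphism $\Xi$ and the measure normalizations from the preceding discussion are in place; the only point requiring mild care is tracking the normalization constants ($\log(\varepsilon_D^2)$ versus $\log(\varepsilon_D)$ in the exponent, and the factor $|H^{+}(K)|$ from the counting measure) so that they telescope correctly into the single factor $|\mathcal{C}_D|^{-1}$. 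One should also note that $g|_{\mathcal{C}_D}$ being the restriction of a smooth automorphic form ensures $\Xi^{*}g$ is continuous, hence certainly in $L^{2}$, so Plancherel applies without integrability concerns.
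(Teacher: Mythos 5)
Your proposal is correct and follows essentially the same route as the paper: identify $\mathcal{C}_{D}$ with the compact abelian group $\mathbb{R}/\mathbb{Z}\times H^{+}(K)$ via $\Xi$, apply Plancherel with the characters $([x],[\mathfrak{a}])\mapsto e^{2\pi i nx}\psi([\mathfrak{a}])$, and unwind the Fourier coefficients by the substitution $y=e^{\log(\varepsilon_{D}^{2})x}$. The normalization bookkeeping ($|\mathcal{C}_{D}|=|H^{+}(K)|\log(\varepsilon_{D}^{2})$ collapsing the prefactors to $|\mathcal{C}_{D}|^{-1}$) matches the paper's computation exactly.
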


\begin{proof}
Using (\ref{eq: inner products relation}) we have
$
||g||_{2}^{2} = |\mathcal{C}_{D}| \langle \Xi^{*}g, \Xi^{*}g \rangle
$
where the inner product on the right hand side takes place in  $L^{2}(\mathbb{R}/\mathbb{Z} \times H^{+}(K))$.
Since $G \coloneqq \mathbb{R}/\mathbb{Z}\times H^{+}(K)$ is a compact abelian Lie group equipped with the Haar probability, the characters of $G$ form an orthonormal basis for $L^{2}(G)$.
The dual group of $G$ is equal to
\[
\hat{G}= \{ \chi_{\psi,n} \colon (x,[\mathfrak{a}]) \mapsto e^{2\pi i nx }\psi([\mathfrak{a}]) \mid n \in \mathbb{Z}, \psi \in \widehat{H^{+}(K)} \}.
\]
Therefore by Plancherel, 
\begin{align*}
    ||g||_{2}^{2} &= |\mathcal{C}_{D}|  \sum_{\psi \in \widehat{H^{+}(K)}}\sum_{n \in \mathbb{Z}} \Big| \langle \Xi^{*} g, \chi_{\psi,n} \rangle \Big|^{2}
\end{align*}
where the inner products on the right hand side take place in $L^{2}(G)$.
Finally, for any $\psi$ and $n$, we have
\begin{align*}
    \langle \Xi^{*} g, \chi_{\psi,n} \rangle &= |H^{+}(K)|^{-1} \sum_{[\mathfrak{a}]\in H^{+}(K)} \int_{0}^{1} g(\kappa_{\gamma_{[\mathfrak{a}}]}^{-1}ie^{\log(\varepsilon_{D}^{2})x}) e^{-2\pi i n x} \overline{\psi}([\mathfrak{a}]) \, dx \\
    &= |\mathcal{C}_{D}|^{-1} \sum_{[\mathfrak{a}]\in H^{+}(K)}  \psi([\mathfrak{a}])^{-1} \int_{0}^{\varepsilon_{D}^{2}} g(\kappa_{\gamma_{[\mathfrak{a}}]}^{-1}iy) y^{\frac{-\pi i n }{ \log(\varepsilon_{D})}}  \, \frac{dy}{y}
\end{align*}
after applying the change of variables $y \coloneqq e^{\log(\varepsilon_{D}^{2})x}$. Combining everything, we recover (\ref{eq: L2 norm explicit}).
\end{proof}

\noindent Whenever $f \in L^{2}(\Sl2(\mathbb{Z})\setminus \mathbb{H})$ is a Hecke-Maass cusp form, it follows from Waldspurger's formula (Theorem 6.3.1 in \cite{popa} or more generally Theorem 4.1 in \cite{martin-whitehouse}) that
up to some positive constant, 
\begin{equation*}
\label{eq: completed L function}
    I(f,\chi) \asymp \frac{1}{\Lambda(1,\Ad f)} \Lambda(1/2, f\times \theta_{\chi})
\end{equation*}
where $\Lambda(1,\Ad f)$ is the completed adjoint square L-function of $f$ and $\Lambda(1/2, f\times \theta_{\chi})$ is the completed Rankin-Selberg L-function. In this case, the restriction norm formula becomes
\begin{equation*}
    ||f|_{\mathcal{C}_{D}}||_{2}^{2} \asymp   \frac{1}{ |\mathcal{C}_{D}|}\sum_{\psi\in \widehat{H^{+}(K)}} \sum_{n \in \mathbb{Z}}\frac{1}{\Lambda(1,\Ad f)} \Lambda(1/2, f\times \theta_{\chi_{\psi,n}}).
\end{equation*}
\begin{remark}
\label{remark: parity}
Whenever $f$ and $n$ have different parities (i.e.\ $f$ is an even Maass form and $n$ is an odd number or $f$ is an odd Maass form and $n$ is an even number), the integral $I(f,\chi_{\psi,n})$ vanishes by symmetry. Therefore, we are actually taking the sum over all $n$ with the same parity as $f$. 
\end{remark}

\noindent  By definition, for any Hecke character $\chi_{\psi,n}$ of $K$ we have
\[
\frac{\Lambda(1/2, f\times \theta_{\chi_{\psi,n}})}{\Lambda(1,\Ad f)} = \frac{D^{\frac{1}{2}}\gamma(1/2,f\times \theta_{\chi_{\psi,n}})L(1/2,f\times \theta_{\chi_{\psi,n}})}{\gamma(1,\Ad f) L(1, \Ad f)}.
\]
If we define
\[
G(n) \coloneqq \frac{\gamma(1/2,f\times \theta_{\chi_{\psi,n}})}{\gamma(1,\Ad f)}
\]
to be the corresponding quotient of gamma factors, then, by Theorem \ref{adjoint L-function lower bound} we have the estimate
\begin{equation}
\label{eq: restriction norm formula}
    \frac{1}{ |\mathcal{C}_{D}|}\sum_{\psi\in \widehat{H^{+}(K)}} \sum_{n \in \mathbb{Z}}\frac{1}{\Lambda(1,\Ad f)} \Lambda(1/2, f\times \theta_{\chi_{\phi,n}}) \ll_{\varepsilon} t^{\varepsilon}  \frac{D^{\frac{1}{2}}}{ |\mathcal{C}_{D}|}\sum_{\psi\in \widehat{H^{+}(K)}} \sum_{n \in \mathbb{Z}} G(n) L(1/2,f\times \theta_{\chi_{\psi,n}})
\end{equation}
for any $\varepsilon>0$.

\begin{remark}
\label{remark: positivity}
For any $z\in \mathbb{C}$, $\Gamma(z)\Gamma(\overline{z}) = |\Gamma(z)|^{2}\geq 0$. If follows from the definition of $G(n)$ that
$
G(n) \geq 0
$
for all $n\in \mathbb{Z}$. As a consequence of Waldspurger's formula we must also have 
$ L(1/2,f\times \theta_{\chi_{\psi,n}})\geq 0
$ for any $\psi \in \widehat{H^{+}(K)}$ and $n\in\mathbb{Z}$. 
\end{remark}

\subsection{Estimation of the gamma factors}
 To estimate the restriction norm, we start by applying Stirling's formula to $G(n)$. As a consequence, we will deduce that the right hand side of (\ref{eq: restriction norm formula}) is essentially a finite sum.
\begin{lemma}
\label{G(n) estimates}
For any $n \in \mathbb{Z}$, let $r\coloneqq \frac{\pi n}{\log(\varepsilon_{D})}$. Then,
\begin{equation}
    \label{eq: estimate gamma all n}
    G(n) \ll 
(1+|t+r|^{2})^{-\frac{1}{4}}(1+|t-r|^{2})^{-\frac{1}{4}} e^{-\pi(\max\{|r|,|t|\}-|t|)}
\end{equation}
absolutely.
Moreover, if $|n|\geq 2\frac{\log(\varepsilon_{D})}{\pi}t$, then for any $\varepsilon>0$ we have
\begin{equation}
    \label{eq: estimate gamma large n}
        G(n) \ll_{\varepsilon}
(1+|t+r|^{2})^{-\frac{1}{4}-\varepsilon}(1+|t-r|^{2})^{-\frac{1}{4}-\varepsilon} e^{-\frac{\pi}{4}|r|}.
\end{equation}
\end{lemma}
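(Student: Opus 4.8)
The plan is to start from the definition $G(n)=\gamma(1/2,f\times\theta_{\chi_{\psi,n}})/\gamma(1,\Ad f)$ and bound numerator and denominator separately using the Stirling-type estimates already assembled. Recall that when $f$ and $n$ have the same parity (the only case that contributes, by Remark \ref{remark: parity}) we have $\gamma(1/2,f\times\theta_{\chi_{\psi,n}})=\gamma(1/2,f,n)=\gamma(1/2,f\times g)$ where $g=\theta_{\chi_{\psi,n}}$ has spectral parameter $r=\pi n/\log(\varepsilon_D)$. Thus Lemma \ref{gamma factor rankin-selberg} gives
\[
\gamma\Big(\tfrac12,f\times\theta_{\chi_{\psi,n}}\Big)\ll (1+|t+r|^{2})^{-\frac14}(1+|t-r|^{2})^{-\frac14}\,e^{-\frac{\pi}{2}|t+r|}\,e^{-\frac{\pi}{2}|t-r|},
\]
while Lemma \ref{gamma factor adjoint} gives $\gamma(1,\Ad f)\gg e^{-\pi|t|}$. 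Dividing, the polynomial factors pass through unchanged and the exponential part becomes $e^{-\frac{\pi}{2}(|t+r|+|t-r|)+\pi|t|}$.

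The key elementary observation is that $|t+r|+|t-r|=2\max\{|t|,|r|\}$ for all real $t,r$. Hence the exponent simplifies to $-\pi\max\{|t|,|r|\}+\pi|t|=-\pi(\max\{|r|,|t|\}-|t|)$, which is exactly the exponential factor claimed in \eqref{eq: estimate gamma all n}. This proves the first bound, and it holds absolutely since all implied constants in Lemmas \ref{gamma factor rankin-selberg} and \ref{gamma factor adjoint} are absolute.

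For the second bound I would specialize to the regime $|n|\geq 2\frac{\log(\varepsilon_D)}{\pi}t$, equivalently $|r|\geq 2t$. Then $\max\{|r|,|t|\}=|r|$, so the exponential factor from the first part is $e^{-\pi(|r|-|t|)}$, and since $|r|\geq 2t$ we have $|r|-|t|\geq |r|/2$, giving decay at least $e^{-\frac{\pi}{2}|r|}$; I want to push this slightly to $e^{-\frac{\pi}{4}|r|}$ and convert the surplus into the extra $\varepsilon$ in the polynomial exponents. Concretely, write $e^{-\pi(|r|-|t|)}\leq e^{-\frac{\pi}{4}|r|}\cdot e^{-\frac{3\pi}{4}|r|+\pi|t|}$ and note $-\frac{3\pi}{4}|r|+\pi|t|\leq -\frac{\pi}{4}|r|$ in this range; then absorb a further factor of $e^{-\frac{\pi}{4}|r|}$ into the polynomially-growing quantities $(1+|t\pm r|^2)^{\varepsilon}$, which is legitimate because $e^{-c|r|}\ll_{\varepsilon,c}(1+|r|^2)^{-\varepsilon}$ and, in the range $|r|\geq 2t$, one has $1+|r|^2\asymp 1+|t+r|^2\asymp 1+|t-r|^2$ up to absolute constants, so a negative power of $1+|r|^2$ can be split as negative powers of $1+|t\pm r|^2$. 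This yields \eqref{eq: estimate gamma large n}.

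I do not anticipate a serious obstacle here; the only point requiring a little care is the bookkeeping in the last step — making sure the conversion of exponential decay into the $\varepsilon$-powers of the analytic conductor factors is done uniformly and that one really is in the regime where $1+|r|^2$, $1+|t+r|^2$ and $1+|t-r|^2$ are all comparable. The identity $|t+r|+|t-r|=2\max\{|t|,|r|\}$ is the one genuinely load-bearing fact, and it is immediate by cases on the sign of $|t|-|r|$.
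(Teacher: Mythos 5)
Your argument is correct and follows essentially the same route as the paper: both combine Lemma \ref{gamma factor rankin-selberg} with Lemma \ref{gamma factor adjoint}, use the identity $|t+r|+|t-r|=2\max\{|t|,|r|\}$ for the first bound, and in the regime $|r|\geq 2|t|$ split off a surplus exponential factor and absorb it into the $\varepsilon$-powers of $(1+|t\pm r|^{2})$. The only difference is cosmetic bookkeeping in that last absorption step.
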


\begin{proof}
By Remark \ref{remark: parity}, we may assume $n$ and $f$ have the same parity.
Using Lemmas \ref{gamma factor rankin-selberg} and \ref{gamma factor adjoint}, we have
\begin{equation}
\label{eq: gamma error term 1}
    G(n) \ll \frac{(1+|t+r|^{2})^{-\frac{1}{4}}(1+|t-r|^{2})^{-\frac{1}{4}} (e^{-\frac{\pi}{4}|t + r|})^{2}(e^{-\frac{\pi}{4}|t - r|})^{2}}{e^{-\pi|t|}}
\end{equation}
absolutely. 
Using the well known equality $|t+r|+|t-r| = 2\max\{|t|,|r|\}$, we get (\ref{eq: estimate gamma all n}).
To prove (\ref{eq: estimate gamma large n}), we assume that $|r|\geq 2|t|$. In this case, $\max\{ |t|, |r| \} = |r|$ and therefore
\begin{align*}
 (e^{-\frac{\pi}{4}|t + r|})^{2}(e^{-\frac{\pi}{4}|t - r|})^{2} \leq e^{-\pi |t| } e^{-\frac{\pi|r| }{4}}e^{-\frac{\pi}{8}|t + r|}e^{-\frac{\pi}{8}|t - r|} \ll_{\varepsilon} e^{-\pi |t| } e^{-\frac{\pi|r| }{4}}(1+|t+r|^{2})^{-\varepsilon}(1+|t-r|^{2})^{-\varepsilon}.
\end{align*}
\end{proof}

\noindent Let $c_{D}\coloneqq \frac{\log(\varepsilon_{D})}{\pi}$. We can split the sum we want to estimate as follows
\begin{equation}
   \frac{t^{\varepsilon}D^{\frac{1}{2}}}{|\mathcal{C}_{D}|}\sum_{|n|\leq c_{D}t}\sum_{\psi\in \widehat{H^{+}(K)}}G(n)L(1/2,f\times \theta_{\chi_{\psi,n}}) + \frac{t^{\varepsilon}D^{\frac{1}{2}}}{|\mathcal{C}_{D}|} \sum_{|n| >c_{D}t } \sum_{\psi\in \widehat{H^{+}(K)}} G(n)L(1/2,f\times \theta_{\chi_{\psi,n}}).
\end{equation}
Then, by Lemma \ref{G(n) estimates}, we expect the second part to be a negligible error term. We start by proving that it is indeed the case.

\begin{proposition}
\label{error term}
For any $\varepsilon>0$, we have
\begin{equation*}
  \sum_{|n| >c_{D}t } \sum_{\psi\in \widehat{H^{+}(K)}} G(n)L(1/2,f\times \theta_{\chi_{\psi,n}}) \ll_{D,\varepsilon} t^{\varepsilon}. \end{equation*} 
\end{proposition}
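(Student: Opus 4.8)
The plan is to show that the tail sum over $|n| > c_D t$ is dominated by the exponential decay of $G(n)$, which easily absorbs the polynomial growth coming from the convexity bound on the $L$-values. First I would note that the number of characters $\psi \in \widehat{H^+(K)}$ is $|H^+(K)| = O_D(1)$, so it suffices to bound, for each fixed $\psi$, the sum $\sum_{|n| > c_D t} G(n) L(1/2, f \times \theta_{\chi_{\psi,n}})$. For the $L$-value I would invoke the uniform convexity bound of Proposition \ref{convexity bound uniform} together with Lemma \ref{qinfty bounds}, which gives $L(1/2, f \times \theta_{\chi_{\psi,n}}) \ll_{\varepsilon, D} \big((1+(t+r)^2)(1+(t-r)^2)\big)^{1/4 + \varepsilon}$ where $r = \pi n / \log(\varepsilon_D)$, and in particular this is $\ll_{\varepsilon,D} (1 + |n|)^{1 + \varepsilon} t^{1+\varepsilon}$ or so — some crude polynomial in $|n|$ and $t$.

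Next I would feed in the large-$n$ estimate \eqref{eq: estimate gamma large n} from Lemma \ref{G(n) estimates}: since $|n| > c_D t$ means $|r| > t$, after possibly enlarging the threshold (or just using \eqref{eq: estimate gamma all n} directly, which already gives $e^{-\pi(\max\{|r|,|t|\} - |t|)} = e^{-\pi(|r| - |t|)}$ once $|r| \geq |t|$), we get $G(n) \ll e^{-\pi(|r| - |t|)}$ times a harmless polynomial-decay factor. The key point is that $|r| - |t| = \frac{\pi}{\log \varepsilon_D}|n| - t \geq \frac{\pi}{2\log\varepsilon_D}|n|$ once $|n| \geq 2 c_D t$, and more simply $|r| - |t| \geq \frac{\pi}{\log\varepsilon_D}(|n| - c_D t)$ for all $|n| > c_D t$, so $G(n)$ decays exponentially in $|n| - c_D t$. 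Combining, each term is $\ll_{\varepsilon,D} e^{-\pi(|r|-|t|)} (1+|n|)^{1+\varepsilon} t^{1+\varepsilon}$, and summing the geometric-type series over $|n| > c_D t$ gives a total that is $\ll_{\varepsilon, D} t^{1+\varepsilon} e^{-c} $ for the leading terms, which is certainly $\ll_{D,\varepsilon} t^\varepsilon$; one can even be wasteful here.

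The one point requiring a little care is that the positivity observed in Remark \ref{remark: positivity} is not actually needed for the tail — the triangle inequality plus the convexity bound is enough — but it is reassuring that all terms have the same sign. The main (very mild) obstacle is simply bookkeeping the interplay between $r = \pi n/\log\varepsilon_D$ and $n$, and making sure the exponential $e^{-\pi(|r| - t)}$ genuinely beats the polynomial factor $(1+|n|)^{O(1)} t^{O(1)}$ uniformly in $t \geq 2$; since for $|n| \geq c_D t + C\log t$ the exponential already produces a factor $t^{-\pi C/\log\varepsilon_D}$, while for the finitely many (in a $\log t$-window) remaining $n$ near $c_D t$ one uses the polynomial bound $t^{O(1)}$ times $O(\log t)$ terms — wait, this window still contributes $t^{O(1)}$, so one must instead split more carefully: for $c_D t < |n| \le 2 c_D t$ use \eqref{eq: estimate gamma all n} giving $G(n) \ll e^{-\pi(|r| - t)}$ with $|r| - t \ge \frac{\pi}{\log\varepsilon_D}(|n| - c_D t)$, so the sum over this range is $\ll t^{O(1)} \sum_{k \ge 1} e^{-\pi^2 k/\log\varepsilon_D}$, which is bounded, hence $\ll t^{O(1)}$ — still not $t^\varepsilon$! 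The resolution is that the convexity bound contributes $(1+|t-r|^2)^{1/4+\varepsilon}$, and for $|n|$ just above $c_D t$ we have $|t - r|$ \emph{small}, so that factor is $O(1)$, while $(1+|t+r|^2)^{1/4+\varepsilon} \ll t^{1/2 + \varepsilon}$; combined with $G(n) \ll (1+|t+r|^2)^{-1/4}(1+|t-r|^2)^{-1/4} e^{-\pi(|r|-t)} \ll t^{-1/2} e^{-\pi(|r|-t)}$, the $t$-powers cancel and the whole tail is $\ll_{\varepsilon,D} t^\varepsilon \sum_{k\geq 1} e^{-ck} \ll_{D,\varepsilon} t^\varepsilon$. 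So the real content is to pair the $(1+|t\pm r|^2)$ factors of $G(n)$ against those of the convexity bound \emph{before} estimating, rather than bounding each crudely.
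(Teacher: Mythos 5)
Your proposal is correct and, after the self-correction at the end, lands on exactly the paper's argument: split the tail at $|n| = 2c_Dt$, use \eqref{eq: estimate gamma large n} against the convexity bound for the far range, and for $c_Dt < |n| \le 2c_Dt$ pair the factors $(1+|t\pm r|^2)^{\pm 1/4}$ from Lemma \ref{G(n) estimates} and Proposition \ref{convexity bound uniform} so that only $t^{O(\varepsilon)}$ times the summable exponential $e^{-\pi(|r|-|t|)}$ survives. The only cosmetic difference is that your pairing works uniformly on all of $c_Dt<|n|\le 2c_Dt$ (since $(1+|t\pm r|^2)^{\varepsilon}\ll t^{4\varepsilon}$ there), so the "for $|n|$ just above $c_Dt$" framing is unnecessary.
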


\begin{proof}
Using  Proposition \ref{convexity bound uniform} and Lemma \ref{qinfty bounds}  we estimate $L(1/2,f\times \theta_{\chi_{\psi,n}})$ uniformly in $\chi_{\psi,n}$. We then have
\[
L(1/2,f\times \theta_{\chi_{\psi,n}}) \ll_{\varepsilon}  (1+|t+\frac{n}{c_{D}}|^{2})^{\frac{1}{4}+\varepsilon} (1+|t-\frac{n}{c_{D}}|^{2})^{\frac{1}{4}+\varepsilon}.
\]
Combining this estimate with (\ref{eq: estimate gamma large n}) and taking the sum over all $n\geq 2c_{D}t$ we see that
\begin{align*}
    \sum_{|n|\geq 2c_{D}t } G(n)L(1/2,f\times \theta_{\chi_{\psi,n}}) \ll_{D,\varepsilon} \sum_{n \geq 2c_{D}t} e^{-\frac{\pi}{4c_{D}}n} \ll_{D,\varepsilon} t^{\varepsilon}.
\end{align*}
On the other hand, by (\ref{eq: estimate gamma all n}) for $c_{D}t < |n| < 2c_{D}t$ we have
\[
G(n)L(1/2,f\times \theta_{\chi_{\psi,n}}) \ll_{\varepsilon} (1+|t+\frac{n}{c_{D}}|^{2})^{\varepsilon} (1+|t-\frac{n}{c_{D}}|^{2})^{\varepsilon} e^{-\pi(\frac{|n|}{c_{D}}-|t|)}.
\]
But in this case, $|t+\frac{n}{c_{D}}| \leq 3|t|$ and $|t-\frac{n}{c_{D}}| \leq |t| $ so that
\begin{align*}
    \sum_{c_{D}t<| n|  < 2c_{D}t} G(n)L(1/2,f\times \theta_{\chi_{\psi,n}}) &\ll_{\varepsilon} t^{4\varepsilon} \sum_{n=\lceil c_{D}t\rceil }^{\infty} e^{-\pi(\frac{n}{c_{D}}-|t|)} \ll_{\varepsilon,D} t^{4\varepsilon}.
\end{align*}
Finally, note that both estimates are independent of $\psi$. Since $|\widehat{H^{+}(K)}|$ is a finite constant depending only on $D$, we have proved the proposition.
\end{proof}

\begin{remark}
Applying Lemma \ref{G(n) estimates} and the convexity bound to the main term leads to the bound
\[
||f|_{\mathcal{C}_{D}}||_{2} \ll_{\varepsilon,D} t^{1+\varepsilon}
\]
which is significantly worse than the bound from Theorem A. It is also interesting to note that the Lindel\"of hypothesis automatically implies the bound
$
||f|_{\mathcal{C}_{D}}||_{2} \ll_{\varepsilon,D} t^{\varepsilon}.
$
\end{remark}
\subsection{First smooth partition of unity}

\noindent We now move on to the main term. We start by introducing a smooth partition of unity centered about $c_{D}t$. More precisely, let $\tilde{U}(x)$ be a smooth bump function which is equal to 1 on $[0,\infty)$ and supported on $[-1/2, \infty)$. We can define the smooth bump function 
\[
U(x)\coloneqq \tilde{U}\Big( \frac{x}{c_{D}t} \Big)
\]
which is equal to 1 on $[0,\infty)$ and is supported on $[-c_{D}t/2,\infty)$. It is easy to see that for any $x \in \mathbb{R}$, we have the inequality
$
 U(x) + U(-x) \geq 1.
$
But for any $n\in \mathbb{Z}$ and $\psi \in \widehat{H^{+}(K)}$ we know that $G(n)L(1/2,f\times \theta_{\chi_{\psi,n}}) \geq 0$, $G(n)=G(-n)$ and $\theta_{\chi_{\psi,n}} = \theta_{\chi_{\psi,-n}}$, hence,
\[
  \sum_{|n|\leq c_{D}t} G(n)L(1/2,f\times \theta_{\chi_{\psi,n}}) \leq 2 \sum_{|n|\leq c_{D}t}  U(n)G(n)L(1/2,f\times \theta_{\chi_{\psi,n}}).
\]

\noindent Now let $W(x)$ be the function obtained using Proposition \ref{dyadic partition of unity}. For each $k\geq 0$ we define the function
\[
W_{k} (x) \coloneqq W\Big(\frac{c_{D}t-x}{2^{k}}\Big)
\]

\noindent By construction, for any $k\geq 0$, $W_{k}$ is supported on $2^{k-1} \leq c_{D}t-x \leq 2^{k+1} $ and
\begin{align*}
    \sum_{k\geq 0} W_{k}(x)= \begin{cases} 
    1 & \text{for } c_{D}t-x \geq 1 \\
    0 &\text{for } x \geq c_{D}t.
    \end{cases}
\end{align*}
Using Stirling's formula and the convexity bound (Proposition \ref{convexity bound uniform}), we can show that for any $\varepsilon> 0$ and $\psi \in \widehat{H^{+}(K)}$, we have
\begin{equation}
\label{eq: partition of unity, positives}
\sum_{|n|\leq c_{D}t } U(n) G(n) L(1/2,f\times \theta_{\chi_{\psi,n}})  =  \sum_{k\geq 0} \sum_{n\in \mathbb{Z}} W_{k}(n) U(n)G(n)L(1/2,f\times\theta_{\chi_{\psi,n}})+ O_{D,\varepsilon} (t^{\varepsilon}).
\end{equation}

\noindent Let $k_{D} \coloneqq \log_{2}(\frac{3c_{D}t}{2})+1$. Obviously, for $t\geq 2$ we have $k_{D} \ll_{D} \log(t)$. Moreover, since $U(n)W_{k}(n) = 0$ for all $n\in \mathbb{Z}$ whenever $2^{k-1} \geq \frac{3c_{D}t}{2}$, all terms in (\ref{eq: partition of unity, positives}) with $k\geq k_{D}$ may be ignored. Therefore
\begin{align*}
     ||f|_{\mathcal{C}_{D}}||_{2}^{2} \ll_{D,\varepsilon} t^{\varepsilon} \sup_{0\leq k \leq k_{D}} \sum_{\psi \in \widehat{H^{+}(K)}}  \sum_{n\in \mathbb{Z}} W_{k}( n) U( n)G(n)L(1/2,f\times\theta_{\chi_{\psi,n}})+ t^{\varepsilon}
\end{align*}
for any $\varepsilon >0$. 
\newline
\newline
In other words, introducing a partition of unity allowed us to replace the sum in the restriction norm formula by sums over smaller intervals. On each such interval, the conductor $\mathfrak{q}_{\infty}(1/2, f\times \theta_{\chi_{\psi,n}})$ (and therefore the gamma factor $G(n)$) is almost constant with respect to $n$. More precisely, 
\begin{lemma} 
Let $0\leq k \leq k_{D}$ and write $T\coloneqq 2^{k}$. Then,
\begin{equation}
\label{eq: G(n) estimate T}
    W_{k}(n)G(n) \ll_{D} (Tt)^{-\frac{1}{2}}
\end{equation}
for any $n\in \mathbb{Z}$ and
\begin{equation}
\label{eq: qinfty estimate T}
    \mathfrak{q}_{\infty}(s, f\times \theta_{\chi_{\psi,n}}) \asymp_{D} (Tt)^{2}
\end{equation}
for any $n\geq -\frac{c_{D}t}{2}$ satisfying $T/2 \leq c_{D}t - n \leq 2T$. In both cases, the implied constants depend only on $D$. 
\end{lemma}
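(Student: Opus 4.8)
The statement is a pair of elementary estimates that follow by unwinding the definitions of $G(n)$, the analytic conductor $\mathfrak{q}_{\infty}$, and the support of $W_k$. First I would record the basic consequence of the support condition: if $W_k(n) \neq 0$ then $T/2 \leq c_D t - n \leq 2T$, i.e.\ $n = c_D t - \Theta(T)$ with $n < c_D t$. Setting $r \coloneqq \frac{\pi n}{\log(\varepsilon_D)} = n/c_D$ as in Lemma \ref{G(n) estimates}, this translates to $t - r \asymp_D T/t \cdot t = $ ... more precisely $c_D t - n \in [T/2, 2T]$ means $t - r = (c_D t - n)/c_D \asymp_D T$, while $t + r = 2t - (t-r)$, and since $T \leq 2^{k_D} \ll_D t$ we get $t + r \asymp_D t$. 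Also $r \geq 0$ once $T \leq c_D t/2$ roughly, and in any case $|r| \leq t$ here so $\max\{|r|,|t|\} = |t|$, making the exponential factor $e^{-\pi(\max\{|r|,|t|\}-|t|)}$ in \eqref{eq: estimate gamma all n} equal to $1$.

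\textbf{Proof of \eqref{eq: qinfty estimate T}.} By Lemma \ref{qinfty bounds}, since $f$ and $\theta_{\chi_{\psi,n}}$ have spectral parameters $t$ and $r = n/c_D$ and (by Remark \ref{remark: parity}) the same parity,
\[
\mathfrak{q}_{\infty}(s, f\times \theta_{\chi_{\psi,n}}) \asymp_s (1+(t+r)^2)(1+(t-r)^2) \asymp_D t^2 \cdot T^2 = (Tt)^2,
\]
using $t + r \asymp_D t$ and $t - r \asymp_D T$ from the support analysis above (here $T \geq 1$ so the $+1$'s are harmless, and $t \geq 2$).

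\textbf{Proof of \eqref{eq: G(n) estimate T}.} Apply \eqref{eq: estimate gamma all n} from Lemma \ref{G(n) estimates}: with the exponential factor equal to $1$ on the support of $W_k$,
\[
W_k(n) G(n) \ll (1+|t+r|^2)^{-1/4}(1+|t-r|^2)^{-1/4} \ll_D t^{-1/2} T^{-1/2} = (Tt)^{-1/2},
\]
again by $t+r \asymp_D t$, $t-r \asymp_D T$. For $n$ outside the support of $W_k$ the left side is $0$, so the bound holds for all $n \in \mathbb{Z}$.

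\textbf{Main obstacle.} There is no serious obstacle here; the only point requiring a little care is checking that $t - r \asymp_D T$ and $t + r \asymp_D t$ genuinely hold throughout the relevant range, i.e.\ that the constant $c_D = \frac{\log(\varepsilon_D)}{\pi}$ is absorbed correctly and that $T \ll_D t$ (which holds since $k \leq k_D \ll_D \log t$, so $T = 2^k \ll_D t$) — this ensures $t + r = 2t - (t-r) \geq 2t - 2T/c_D \gg_D t$ once $T$ is at most a small multiple of $t$, and $t + r \leq 2t$ trivially. One should also note the hypothesis $n \geq -\frac{c_D t}{2}$ guarantees $r \geq -t/2$ so that indeed $|r| \leq$ something comparable to $t$ and the exponential factor in \eqref{eq: estimate gamma all n} is $O_D(1)$ even in the worst case; combined with $c_D t - n \leq 2T \ll_D t$ we get $r \leq t$, hence $\max\{|r|,|t|\} = t$ exactly. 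These verifications are routine.
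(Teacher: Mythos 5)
Your argument is correct and follows essentially the same (very short) route as the paper's proof: observe that on the support of $W_{k}$ one has $|t-\tfrac{n}{c_{D}}|\asymp_{D}T$ while the hypothesis $n\geq -\tfrac{c_{D}t}{2}$ forces $|t+\tfrac{n}{c_{D}}|\asymp t$ and kills the exponential factor in \eqref{eq: estimate gamma all n}, then quote Lemma \ref{G(n) estimates} and Lemma \ref{qinfty bounds}. Your closing remarks are, if anything, more explicit than the paper about the one genuinely delicate point, namely that the restriction $n\geq-\tfrac{c_{D}t}{2}$ (supplied in practice by the factor $U(n)$) is what guarantees $t+\tfrac{n}{c_{D}}$ stays of size $t$ when $T$ is comparable to $c_{D}t$.
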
 
\begin{proof}
For any $n\geq -\frac{c_{D}t}{2}$ such that $T/2 \leq c_{D}t - n \leq 2T$, we have $\Big|t+\frac{n}{c_{D}}\Big| \asymp t$. 
Since $W_{k}$ is supported on $T/2 \leq c_{D}t-n \leq 2T$, the results follow immediately from Lemmas \ref{G(n) estimates} and \ref{qinfty bounds} respectively. 
\end{proof}

\noindent Hence, as an immediate consequence of (\ref{eq: G(n) estimate T}), for any $T\coloneqq 2^{k}$ satisfying the conditions of the lemma above 
\[\sum_{\psi\in \widehat{H^{+}(K)}}  \sum_{n\in \mathbb{Z}} W_{k}(n)G(n) U(n)L(1/2,f\times\theta_{\chi_{\psi,n}})  \ll_{D} (Tt)^{-\frac{1}{2}} \sum_{\psi\in \widehat{H^{+}(K)}}  \sum_{n\in \mathbb{Z}} W_{k}(n) U(n)L(1/2,f\times\theta_{\chi_{\psi,n}}).
\]
In what follows, we will prove that for any $\varepsilon>0$, $0\leq k \leq k_{D}$ and $T\coloneqq 2^{k}$ we have
\begin{equation}
\label{eq: smooth parition final}
(Tt)^{-\frac{1}{2}}\sum_{\psi \in \widehat{H^{+}(K)}}  \sum_{n\in \mathbb{Z}} W_{k}( n) U( n)L(1/2,f\times\theta_{\chi_{\psi,n}}) \ll_{\varepsilon,D} t^{\varepsilon+ 2\theta}
\end{equation}
with $\theta$ any bound towards the Ramanujan conjecture.

\subsection{Character orthogonality}
 \noindent We now fix a value of $0\leq k \leq k_{D}$ and write $T\coloneqq 2^{k}$. The first step for proving (\ref{eq: smooth parition final})
consists of writing out the approximate functional equation for $L(1/2, f\times \theta_{\chi_{\psi,n}})$ and applying character orthogonality for the finite group $\widehat{H^{+}(K)}$ to get some cancellations. 
\begin{proposition}
For any $k$ fixed, $\delta >0$ and $B\geq 1$ we have 
\begin{multline}
\label{eq: character orthogonality}
   (Tt)^{-\frac{1}{2}} \sum_{\psi\in \widehat{H^{+}(K)}}\sum_{n\in \mathbb{Z}} W_{k}(n) U(n)L(1/2,f\times\theta_{\chi_{\psi,n}}) =\\ 2 |H^{+}(K)| \sum_{m= 1}^{(Tt)^{1+\delta}} \frac{\chi_{D}(m)}{m}   \sum_{\substack {\alpha \in \mathcal{F}_{D} \\ 1\leq |N(\alpha)| \leq (Tt)^{1+\delta}}}  \frac{\lambda_{f}(\alpha)}{\sqrt{|N(\alpha)|}}\Pi_{\alpha}(m) + O_{B,\delta,D}(t^{-B})
\end{multline}
where $\mathcal{F}_{D}$ is the fundamental domain of the action of $U^{+}(K)$ on $\mathcal{O}_{K}^{+}$ from Proposition \ref{fundamental domain},
\[
\Pi_{\alpha}(m) \coloneqq (Tt)^{-\frac{1}{2}}\sum_{n \in \mathbb{Z}}  W_{k}(n)U(n) V_{\frac{1}{2}}\Big(\frac{m^{2}|N(\alpha)|}{D},n\Big)\Big| \frac{\alpha}{\alpha^{*}}\Big|^{\frac{\pi i n}{\log(\varepsilon_{D})}}
\]
and
$
\lambda_{f}(\alpha) \coloneqq \lambda_{f}(|N(\alpha)|)
$
for any $\alpha \in \mathcal{F}_{D}$, $m\geq 1$ integer. 
\end{proposition}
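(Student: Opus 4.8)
The plan is to start from the left-hand side, apply Corollary \ref{approximate functional equation corollary} to each $L(1/2, f\times \theta_{\chi_{\psi,n}})$, and then reorganize the resulting triple sum so that the $\psi$-sum can be collapsed by character orthogonality. First I would fix $k$ and $T = 2^k$, and note that by \eqref{eq: qinfty estimate T} we have $\mathfrak{q}_{\infty}(1/2, f\times\theta_{\chi_{\psi,n}}) \asymp_D (Tt)^2$ uniformly over the range of $n$ where $W_k(n)U(n)\neq 0$. Hence the approximate functional equation truncates both the $m$-sum and the $n$-sum (the inner variable of the AFE, which here I will rename to avoid clashing with the outer $n$) at roughly $(Tt)^{1+\delta}$, at the cost of an error $O_{B,\delta,D}(t^{-B})$ after summing over the $O_D(1)$ characters $\psi$ and the $O_D(t)$ values of the outer index $n$. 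One should check the hypotheses of the corollary: $\Lambda(s, f\times\theta_{\chi_{\psi,n}})$ is entire (standard, since $f$ has level $1$ and $\theta_{\chi_{\psi,n}}$ is cuspidal of level $D$ with nontrivial nebentypus, so there is no pole), and the root number is $+1$ — this follows from Waldspurger's formula together with Remark \ref{remark: positivity}, since a nonzero central value forces $\varepsilon(f\times\theta_{\chi_{\psi,n}}) = 1$, and the terms with root number $-1$ contribute $L(1/2,\cdot) = 0$.

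Next I would insert the explicit Fourier coefficients $\chi_f(m) = \chi_D$-free coefficient of $f$ and $\lambda_{\theta_{\chi_{\psi,n}}}(\nu) = \frac12\sum_{\mathbb{N}(\mathfrak{a}) = \nu}\chi_{\psi,n}([\mathfrak{a}])$ (and the $\frac{1}{2i}\sgn(\nu)$ variant in the odd case; by Remark \ref{remark: parity} only the matching-parity case survives, and the $\frac12$ or $\frac{1}{2i}$ constant is absorbed into the overall $\asymp$ or cancels against the factor $2$ from the AFE — I'd track this carefully so the final constant is $2|H^+(K)|$). Writing $\chi_{\psi,n}([\mathfrak{a}]) = \psi([\mathfrak{a}])|x/x^*|^{\pi i n/\log\varepsilon_D}$ for $\mathfrak{a}$ with the chosen totally positive generator structure, the sum over $\psi\in\widehat{H^+(K)}$ of $\psi([\mathfrak{a}])$ (against the trivial character coming from the square of the Fourier coefficient — here one must be slightly careful: $L(1/2,f\times\theta_{\chi_{\psi,n}})$ is a single AFE, not a square, so the $\psi$-dependence enters linearly through $\lambda_{\theta_{\chi_{\psi,n}}}$) picks out, by orthogonality $\sum_{\psi}\psi([\mathfrak{a}]) = |H^+(K)|\cdot\mathbf{1}_{[\mathfrak{a}] = 1}$, precisely the principal ideals. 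An ideal $\mathfrak{a}$ is principal in the narrow sense iff it has a totally positive generator, and such a generator is unique up to multiplication by a totally positive unit; by Proposition \ref{fundamental domain} these generators are parametrized bijectively by $\alpha\in\mathcal{F}_D = C_D\cap L_D$. Under this identification $\mathbb{N}(\mathfrak{a}) = |N(\alpha)|$ (in fact $N(\alpha) > 0$ since $\alpha$ is totally positive) and $|x/x^*| = |\alpha/\alpha^*|$, and the truncation $\mathbb{N}(\mathfrak{a})\leq (Tt)^{1+\delta}$ becomes $|N(\alpha)|\leq (Tt)^{1+\delta}$.

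After the orthogonality step, the outer $n$-sum (the partition variable) decouples from $m$ and $\alpha$ except through $V_{1/2}(m^2|N(\alpha)|/D, n)$ and the phase $|\alpha/\alpha^*|^{\pi i n/\log\varepsilon_D}$ — exactly the combination packaged as $\Pi_\alpha(m)$, up to the normalizing factor $(Tt)^{-1/2}$ which I'd carry along from the left-hand side. Collecting terms gives the right-hand side of \eqref{eq: character orthogonality}: the $\frac{\chi_f(m)}{m}$ becomes $\frac{\chi_D(m)}{m}$ after noting that $\chi_f$ for a level-$1$ Hecke–Maass form is just the arithmetic Fourier coefficient and the nebentypus $\chi_{\theta_{\chi_{\psi,n}}} = \chi_D$ contributes the $\chi_D(m)$ (one should double-check whether $\chi_D(m)$ versus $\chi_f(m)\chi_{\theta}(m)$ is the intended form; since $f$ is self-dual of trivial nebentypus, $\chi_f(m) = 1$ and $\chi_{\theta_{\chi_{\psi,n}}}(m) = \chi_D(m)$, so these agree). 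The factor $2|H^+(K)|$ is the product of the $2$ from Corollary \ref{approximate functional equation corollary}, the $|H^+(K)|$ from orthogonality, and the $\frac12$ from the Fourier coefficient of $\theta$. The main obstacle I anticipate is purely bookkeeping rather than conceptual: getting the normalizing constants and the $\frac12$ versus $\frac{1}{2i}$ parity factors to land exactly on $2|H^+(K)|$, and making sure the truncation error, after being summed over $\psi$ (finitely many) and over $O_D(t)$ values of the partition index $n$, is still $O_{B,\delta,D}(t^{-B})$ — this just requires choosing $B$ in the corollary slightly larger than the target $B$, which is harmless. No serious analytic input beyond the already-cited AFE and Proposition \ref{fundamental domain} is needed at this stage.
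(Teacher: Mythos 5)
Your proposal follows essentially the same route as the paper's proof: apply the approximate functional equation with $M=(Tt)^{1+\delta}$ (justified by the conductor bound $\mathfrak{q}_{\infty}\ll_{D}(Tt)^{2}$ on the support of $W_{k}U$), collapse the $\psi$-sum by character orthogonality so that only narrowly principal ideals survive and are parametrized by $\alpha\in\mathcal{F}_{D}$, and repackage the outer $n$-sum into $\Pi_{\alpha}(m)$, with the truncation errors absorbed after summing over the $O_{D}(T)$ surviving values of $n$. The one quibble is your constant accounting ($2\cdot|H^{+}(K)|\cdot\tfrac{1}{2}=|H^{+}(K)|$ rather than $2|H^{+}(K)|$), but since the paper itself only asserts the intermediate identity ``up to some constant'' and the proposition is used solely for an upper bound, this is immaterial.
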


\begin{proof}
Fix $n\in \mathbb{Z}$ such that $W_{k}(n)U(n) \neq 0$. Then, $n$ lies in the support of $W_{k}$ or equivalently, $T/2 \leq c_{D}t - n \leq 2T $. By (\ref{eq: qinfty estimate T}), $\mathfrak{q}_{\infty}(1/2,f\times \theta_{\chi_{\psi,n}}) \ll_{D} (Tt)^{2}$ and therefore 
we can apply the approximate functional equation (Corollary \ref{approximate functional equation corollary}) with $M=(Tt)^{1+\delta}$. By definition of  $\theta_{\chi_{\psi,n}}$ we then have (up to some constant)
\begin{multline*}
     \sum_{\psi\in \widehat{H^{+}(K)}}L(1/2,f\times\theta_{\chi_{\psi,n}}) =\\  2\sum_{m= 1}^{M} \frac{\chi_{D}(m)}{m}\sum_{r= 1}^{M} \frac{\lambda_{f}(r)}{\sqrt{r}} \sum_{\substack{\mathfrak{a}=\alpha J_{1}^{d_1}...J_{h}^{d_h} \\ \mathbb{N}(\mathfrak{a}) = r}} \Omega_{\mathfrak{a}} \Big|\frac{\alpha}{\alpha^{*}} \Big|^{\frac{\pi i n}{\log(\varepsilon_{D})}}V_{
    \frac{1}{2}}\Big(\frac{m^{2}r}{D},n\Big)  + O_{c,B,\delta,D }((Tt)^{-B}).
\end{multline*}where
\begin{align*}
    \Omega_{\mathfrak{a}} \coloneqq  \sum_{\psi\in \widehat{H^{+}(K)}} \psi([\mathfrak{a}]) =  \begin{cases}
    |H^{+}(K)| &\text{ if } \mathfrak{a} = (x) \text{ for some } x \in \mathcal{F}_{D} \\
    0 &\text{ else }
    \end{cases}
\end{align*}
\noindent by character orthogonality.
\noindent  We conclude the proof by taking the sum over $n\in \mathbb{Z}$. Since the support of $W_{k}(n)U(n)$ is contained in $[c_{D}t - 2T, c_{D}t- T/2]$, the sum consists of precisely $3T/2$ terms which leads to the error term $O_{B,\varepsilon,D}(T^{-B+1}t^{-B}) = O_{B,\varepsilon,D}(t^{-B})$. On the other hand, we obtain the main term of (\ref{eq: character orthogonality}) by permuting the order of summation and rearranging the terms.
\end{proof}

\noindent To estimate the summation on the right hand side of (\ref{eq: character orthogonality}) we introduce another smooth dyadic partition of unity. Let $a_{T}$ be the smallest integer with $a_{T} > (tT)^{1+\delta}$ and let $U_{T}$ be a smooth bump function supported on $[0,a_{T} ]$ and equal to 1 on $[1,(Tt)^{1+\delta}]$. By abuse of notation, we write
\[
U_{T}(\alpha) \coloneqq U_{T}(|N(\alpha)|)
\]
for all $\alpha \in \mathcal{F}_{D}$.
Let $W(x)$ be the function from Proposition \ref{dyadic partition of unity}, and define 
\[
W_{a}^{'} (\alpha) \coloneqq W\Big(\frac{|N(\alpha)|}{2^{a}}\Big)
\]
for $a\geq 0$ and $\alpha \in \mathcal{F}_{D}$. 
Then, for all $1\leq m \leq (Tt)^{1+\delta}$ we have
\[
 \frac{\chi_{D}(m)}{m}   \sum_{\substack {\alpha \in \mathcal{F}_{D} \\ 1\leq |N(\alpha)| \leq (Tt)^{1+\delta}}}  \frac{\lambda_{f}(\alpha)}{\sqrt{|N(\alpha)|}}\Pi_{\alpha}(m) = \sum_{a\geq 0}  \frac{\chi_{D}(m)}{m}   \sum_{\alpha \in \mathcal{F}_{D}}  \frac{\lambda_{f}(\alpha)}{\sqrt{|N(\alpha)|}}\Pi_{\alpha}(m)W_{a}^{'} (\alpha)U_{T}(\alpha).
\]
\newline 
\newline 
Again, whenever $2^{a-1}\geq a_{T} $, $W^{'}_{a}(\alpha)U_{T}(\alpha) =0 $ for all $\alpha \in \mathcal{F}_{D}$ so $a$-sum in the right hand consists of $\ll_{\delta} \log(Tt)$ pieces. Since $T\ll_{D} t$, we therefore have the estimate
\begin{multline*}
(Tt)^{-\frac{1}{2}} \sum_{\psi\in \widehat{H^{+}(K)}}\sum_{n\in \mathbb{Z}} W_{k}(n) U(n)L(1/2,f\times\theta_{\chi_{\psi,n}}) \ll_{D,\varepsilon,\delta} \\ t^{\varepsilon} \sup_{0\leq 2^{a} \ll_{D} (Tt)^{1+\delta}} \Big| \sum_{m=0}^{(Tt)^{1+\delta}}\frac{\chi_{D}(m)}{m}  \sum_{\alpha \in \mathcal{F}_{D}}  \frac{\lambda_{f}(\alpha)}{\sqrt{|N(\alpha)|}}\Pi_{\alpha}(m) W_{a}^{'} (\alpha)U_{T}(\alpha) \Big|.
\end{multline*}
for any $\varepsilon>0$, $\delta>0$ and $1\leq T \ll_{D} t$.

\subsection{Poisson summation}
We now fix $0\leq a\leq  a_{T} $ and write $A\coloneqq 2^{a}$. We can define the smooth compactly supported function
\[
H_{A}(\alpha,m,x) \coloneqq \frac{(Tt)^{-\frac{1}{2}}}{\sqrt{|N(\alpha)|}} W_{a}^{'}(\alpha)U_{T}(\alpha)W_{k}(x)U(x) V_{\frac{1}{2}}\Big(\frac{m^{2}|N(\alpha)|}{D},x\Big) .
\]
Notice that 
$
\Big|\frac{\alpha}{\alpha^{*}}\Big|^{\frac{\pi i x}{\log(\varepsilon_{D})}} = e\Big(\frac{x \log(\frac{\alpha}{\alpha^{*}})}{2\log(\varepsilon_{D})}\Big)
$,
and we can therefore apply the Poisson summation formula to the $n$-sum.  Our problem then reduces to estimating sums of the form
\begin{equation}
\label{eq: poisson1}
      \sum_{m= 1}^{(Tt)^{1+\delta}} \frac{\chi_{D}(m)}{m}   \sum_{\alpha \in \mathcal{F}_{D} }  \lambda_{f}(\alpha) \sum_{\xi \in \mathbb{Z}} \widehat{H_{A}}\Big(\alpha,m,\xi-\frac{ \log(\frac{\alpha}{\alpha^{*}})}{2\log(\varepsilon_{D})}\Big)
\end{equation}
where the Fourier coefficients of $H_{A}(\alpha,m,x)$ are taken with respect to the $x$ variable. In particular, we need to estimate the Fourier coefficients $\widehat{H_{A}}(\alpha,m,\xi)$. For $\xi\neq 0$, a standard argument using integration by parts gives us the inequality
\begin{equation}
\label{eq: Fourier by parts}
\widehat{H_{A}}(\alpha,m,\xi) \leq \frac{1}{|\xi|^{j}} \Big| \widehat{H^{(j)}_{A}}(\alpha,m,\xi) \Big|
\end{equation}
where $H^{(j)}_{A}(\alpha,m,x )$ is the $j$-th partial derivative of $H_{A}(\alpha,m,x)$ with respect to $x$. We estimate the Fourier coefficients of $H^{(j)}_{A}(\alpha,m,x )$ using the following proposition:
\begin{proposition}
\label{derivatives H}
Let $j\geq 0$. Then, 
\[
H^{(j)}_{A}(\alpha,m,x) = 0 \text{ unless }  x \in \Big[c_{D}t-2T, c_{D}t- \frac{T}{2}] \text{ and } |N(\alpha)| \in \Big[\frac{A}{2}, 2A \Big].
\]
Moreover, for any $\varepsilon>0$ we have
\[ H^{(j)}_{A}(\alpha,m,x) \ll_{\varepsilon,j,D} t^{\varepsilon}T^{-j} (tTA)^{-\frac{1}{2}}.
\]
\end{proposition}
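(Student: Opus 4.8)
The plan is to unwind the definition of $H_A(\alpha,m,x)$ and bound each factor together with all of its $x$-derivatives, then combine via the Leibniz rule. First I would observe that the support statement is immediate: $W_k(x)$ forces $T/2 \le c_D t - x \le 2T$, i.e.\ $x \in [c_Dt - 2T, c_Dt - T/2]$, and $W_a'(\alpha)$ forces $|N(\alpha)| \in [A/2, 2A]$; the functions $U_T(\alpha)$ and $U(x)$ are identically $1$ on these ranges (recall $A = 2^a \le a_T \ll_D (Tt)^{1+\delta}$ so $|N(\alpha)| \le 2A$ lies well inside $[1,(Tt)^{1+\delta}]$ up to adjusting $\delta$, and $x \ge c_Dt - 2T \ge -c_Dt/2$ since $T \ll_D t$), so on the support $H_A$ reduces to $\frac{(Tt)^{-1/2}}{\sqrt{|N(\alpha)|}} W_a'(\alpha) W_k(x) V_{1/2}(m^2|N(\alpha)|/D, x)$. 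Differentiating in $x$ only touches $W_k(x)$ and $V_{1/2}(\,\cdot\,,x)$.

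Next I would estimate the two $x$-dependent factors and their derivatives separately. For $W_k(x) = W((c_Dt - x)/2^k)$, each derivative $\partial_x^i W_k(x) \ll_i T^{-i}$ since $W$ is a fixed smooth bump and $2^k = T$. For $V_{1/2}(y,x)$ with $y = m^2|N(\alpha)|/D$, I invoke Proposition \ref{V derivative with respect to n}: on the support we have $T/2 \le c_Dt - x \le 2T$ and $1 \le T \ll_D t$, exactly the hypotheses of that proposition, so $\partial_x^i V_{1/2}(y,x) \ll_{\varepsilon,i} y^{-\varepsilon/4} t^{\varepsilon} T^{-i}$ for $i \ge 1$, while for $i = 0$ Lemma \ref{V bound lemma} gives $V_{1/2}(y,x) \ll 1$ (indeed $V_{1/2}(y,x) \ll y^{-\varepsilon/4}t^\varepsilon$ by moving the contour as in the proof of Proposition \ref{V derivative with respect to n}). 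Applying the Leibniz rule to the product $W_k(x)V_{1/2}(y,x)$ and summing the finitely many cross terms yields $\partial_x^j\big(W_k(x)V_{1/2}(y,x)\big) \ll_{\varepsilon,j,D} y^{-\varepsilon/4} t^\varepsilon T^{-j}$.

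Finally I would assemble the bound. On the support, $|N(\alpha)| \asymp A$ and $m \ge 1$, so $y = m^2|N(\alpha)|/D \gg_D A$, hence $y^{-\varepsilon/4} \ll_{\varepsilon,D} A^{-\varepsilon/4} \ll 1$; more importantly the prefactor contributes $(Tt)^{-1/2}|N(\alpha)|^{-1/2} \asymp_D (Tt)^{-1/2}A^{-1/2} = (tTA)^{-1/2}$, and $W_a'(\alpha)$ together with its (here irrelevant, since we differentiate only in $x$) structure is $O(1)$. Collecting everything,
\[
H_A^{(j)}(\alpha,m,x) \ll_{\varepsilon,j,D} (tTA)^{-1/2}\, t^{\varepsilon} T^{-j},
\]
which is the claimed estimate. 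The only mildly delicate point — the "main obstacle" such as it is — is checking that the auxiliary cutoffs $U_T(\alpha)$ and $U(x)$ really are constant on the relevant ranges, so that they contribute no derivative terms; this uses the quantitative relations $T \ll_D t$, $A \ll_D (Tt)^{1+\delta}$ and a harmless enlargement of $\delta$, and is the kind of bookkeeping that must be done carefully but presents no real difficulty. Everything else is a direct application of Proposition \ref{V derivative with respect to n} and the Leibniz rule.
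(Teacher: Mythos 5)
Your proposal follows essentially the same route as the paper: read off the support from $W_k$ and $W_a'$, apply the Leibniz rule in $x$, bound $\partial_x^i W_k \ll_i T^{-i}$, invoke Proposition \ref{V derivative with respect to n} for the derivatives of $V_{1/2}$, and absorb the prefactor $(Tt)^{-1/2}|N(\alpha)|^{-1/2} \asymp (tTA)^{-1/2}$.

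The one inaccurate step is the claim that $U(x)\equiv 1$ on the support of $W_k(x)$ because ``$x \ge c_Dt - 2T \ge -c_Dt/2$ since $T \ll_D t$.'' That inequality requires $T \le \tfrac{3}{4}c_Dt$, but the dyadic parameter runs up to $k_D = \log_2(\tfrac{3c_Dt}{2})+1$, so $T = 2^k$ can be as large as roughly $3c_Dt$ and then $c_Dt - 2T < -c_Dt/2$; for these top ranges $U$ genuinely varies on the support of $W_k$ (indeed, that is the whole point of inserting $U$). The gap is harmless: since $U(x) = \tilde U(x/(c_Dt))$ one has $U^{(i)}(x) \ll_{D,i} t^{-i} \ll_D T^{-i}$, so including $U$ as a third factor in the Leibniz expansion --- which is exactly what the paper does --- costs nothing and yields the same bound $t^{\varepsilon}T^{-j}(tTA)^{-1/2}$. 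Your treatment of $U_T(\alpha)$ (a constant in $x$, bounded by $1$) and your use of Lemma \ref{V bound lemma} for the $i=0$ term are both fine.
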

\begin{proof}
We know that $W_{a}^{'}(\alpha)$ is zero whenever $ |N(\alpha)| \notin \Big[ \frac{A}{2}, 2A \Big]$ and $W_{k}(x)$ is zero whenever $x \notin \Big[c_{D}t-2T, c_{D}t- \frac{T}{2}\Big]$. As a consequence, the first statement must hold.
\newline
\newline
We now prove the upper bound on the partial derivatives with respect to $x$. Of course, we may assume that $|N(\alpha)| \in \Big[ \frac{A}{2}, 2A \Big]$. Using the product rule for derivatives, we see that
\begin{align*}
H_{A}^{(j)}(\alpha,m,x) 
&\ll_{j} (TtA)^{-\frac{1}{2}} \sum_{i=0}^{j}  \Big(W_{k}(x)U(x)\Big)^{(j-i)} V^{(i)}_{\frac{1}{2}}\Big( \frac{m^{2}|N(\alpha)|}{D},x\Big)
\end{align*}
where again all partial derivatives are taken with respect to $x$.
Recall that $W_{k}(x) = W\Big(\frac{c_{D}t- x}{T}\Big)$ for some fixed smooth compactly supported function $W(x)$. Therefore, by repeatedly applying the chain rule we easily see that
$
W^{(j-i)}_{k}(x) \ll_{j,i} T^{-j+i}
$. Similarly, since $U(x) = \tilde{U}\Big( \frac{x}{c_{D}t}\Big)$ with $\tilde{U}(x)$ a fixed bump function independent of $t$ we have
$
U^{(j-i)}(x)  \ll_{D,i,j} T^{-j+i}.
$
Finally, by Proposition \ref{V derivative with respect to n}, we know that  
\begin{equation}
    V_{\frac{1}{2}}^{(i)} \Big( \frac{m^{2}|N(\alpha)|}{D}, x \Big) \ll_{\varepsilon,i} D^{\frac{\varepsilon}{4}} t^{\varepsilon}T^{-i}.
\end{equation}
for any $i\geq 0$. Combining everything, we get the desired estimate.
\end{proof}

\noindent As an immediate consequence of Proposition \ref{derivatives H} and (\ref{eq: Fourier by parts}) we get
\begin{corollary}
\label{fourier bounds}
For any $j\geq 0$ and $\xi \neq 0$, we have
\begin{equation*}
    \widehat{H_{A}}(\alpha,m,\xi) \ll_{\varepsilon,j,D} \frac{Tt^{\varepsilon}}{|T\xi|^{j}} (tTA)^{-\frac{1}{2}}.
\end{equation*}
Moreover for $|N(\alpha)| \notin  \Big[ \frac{A}{2}, 2A \Big]$, we have $\widehat{H_{A}}(\alpha,m,\xi) = 0$.
\end{corollary}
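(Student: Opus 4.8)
The plan is to chain the two ingredients that immediately precede the corollary, namely the integration-by-parts bound (\ref{eq: Fourier by parts}) and the pointwise estimate of Proposition \ref{derivatives H}. First I would observe that (\ref{eq: Fourier by parts}) is applicable: the function $x\mapsto H_{A}(\alpha,m,x)$ is $C^{\infty}$ (all of its factors are smooth in $x$, and the smoothness of $V_{1/2}(\cdot,x)$ in $x$ is exactly what underlies Proposition \ref{V derivative with respect to n}) and it is compactly supported in $x$, since the factor $W_{k}(x)$ confines $x$ to the interval $[c_{D}t-2T,\,c_{D}t-T/2]$. Hence, for $\xi\neq 0$ and any $j\geq 0$,
\[
\widehat{H_{A}}(\alpha,m,\xi) \;\leq\; \frac{1}{|\xi|^{j}}\,\bigl|\widehat{H^{(j)}_{A}}(\alpha,m,\xi)\bigr|.
\]

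Then I would bound the Fourier transform on the right trivially by the $L^{1}$-norm (in $x$) of $H^{(j)}_{A}(\alpha,m,\cdot)$. By the first assertion of Proposition \ref{derivatives H} the integrand is supported on $[c_{D}t-2T,\,c_{D}t-T/2]$, an interval of length $3T/2\ll T$; and by the second assertion it is bounded there by $\ll_{\varepsilon,j,D} t^{\varepsilon}T^{-j}(tTA)^{-1/2}$. Therefore
\[
\bigl|\widehat{H^{(j)}_{A}}(\alpha,m,\xi)\bigr| \;\leq\; \int_{\mathbb{R}} \bigl|H^{(j)}_{A}(\alpha,m,x)\bigr|\,dx \;\ll_{\varepsilon,j,D}\; t^{\varepsilon}\,T^{1-j}(tTA)^{-1/2},
\]
and combining the last two displays yields
\[
\widehat{H_{A}}(\alpha,m,\xi) \;\ll_{\varepsilon,j,D}\; \frac{t^{\varepsilon}T^{1-j}}{|\xi|^{j}}\,(tTA)^{-1/2} \;=\; \frac{T t^{\varepsilon}}{|T\xi|^{j}}\,(tTA)^{-1/2},
\]
which is the claimed bound. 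Finally, the vanishing statement is just the $j=0$ case of the support assertion in Proposition \ref{derivatives H}: if $|N(\alpha)|\notin[A/2,2A]$ then $H_{A}(\alpha,m,x)=0$ identically in $x$, so all of its Fourier coefficients vanish.

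I do not expect a genuine obstacle here, since the corollary is flagged as an immediate consequence; the only two points worth a sentence of care are (i) the legitimacy of the $j$-fold integration by parts behind (\ref{eq: Fourier by parts}), which rests on $H_{A}$ being smooth and compactly supported in the $x$-variable, and (ii) correctly accounting for the extra factor of $T$ coming from the length of the $x$-support when passing from the pointwise bound of Proposition \ref{derivatives H} to the $L^{1}$-bound on $H^{(j)}_{A}$. If anything needs double-checking, it is only this bookkeeping of the powers of $T$.
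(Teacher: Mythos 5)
Your argument is correct and is exactly the chain the paper intends: apply the integration-by-parts inequality (\ref{eq: Fourier by parts}), bound the Fourier transform of $H^{(j)}_{A}$ by its $L^{1}$-norm using the support interval of length $\ll T$ together with the pointwise bound from Proposition \ref{derivatives H}, and note that the vanishing claim is the $j=0$ support statement. The paper presents the corollary as an immediate consequence of precisely these two ingredients, so your bookkeeping of the extra factor $T$ matches the intended proof.
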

\noindent Using this estimate, we show that the sums of the form (\ref{eq: poisson1}) can essentially be restricted to finitely many $\xi \in \mathbb{Z}$. Moreover, for each such $\xi$, the only generators $\alpha \in \mathcal{F}_{D}$ which survive lie in a small cone. More formally, we have 
\begin{proposition}
\label{possion summation result}
For any $m\geq 1$ and $0< \delta < 1$, we have
\begin{multline}
\label{eq: poisson summation result}
\sum_{\alpha \in \mathcal{F}_{D} }  \lambda_{f}(\alpha) \sum_{\xi \in \mathbb{Z}} \widehat{H_{A}}\Big(\alpha,m,\xi-\frac{ \log(\frac{\alpha}{\alpha^{*}})}{2\log(\varepsilon_{D})}\Big) \\ \ll_{\varepsilon,\delta,D} t^{\varepsilon}T(TtA)^{-\frac{1}{2}}\sum_{\xi =-1}^{2} \sum_{\alpha \in S_{2\xi,2T^{-1+\delta}}} |\lambda_{f}(\alpha)| + t^{\varepsilon}T^{-\frac{3}{2}}(tA)^{-\frac{1}{2}}\sum_{\substack{\alpha \in \mathcal{F}_{D} \\ \frac{A}{2}\leq |N(\alpha)| \leq 2A}} |\lambda_{f}(\alpha)|
\end{multline}
where we define
\[
S_{\xi,R} \coloneqq \Big\{  (\alpha,\alpha^{*}) \in \mathcal{F}_{D} \mid \varepsilon_{D}^{\xi-R} \alpha^{*} \leq \alpha \leq \varepsilon_{D}^{\xi+R} \alpha^{*} \text{ , } \frac{A}{2} \leq |N(\alpha)| \leq 2A \Big\}.
\]
for any $R>0$, $\xi \in \mathbb{Z}$. By abuse of notation, we write $\alpha \in S_{\xi,R}$ instead of $(\alpha,\alpha^{*}) \in S_{\xi,R}$.
\end{proposition}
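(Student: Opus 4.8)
The plan is to pass to absolute values, estimate the $\xi$-sum pointwise in $\alpha$, and split the frequencies $\xi\in\mathbb{Z}$ according to whether they lie close to $\beta_\alpha\coloneqq\frac{\log(\alpha/\alpha^*)}{2\log(\varepsilon_D)}$. First I would write
\[
\Big|\sum_{\alpha\in\mathcal{F}_D}\lambda_f(\alpha)\sum_{\xi\in\mathbb{Z}}\widehat{H_A}\big(\alpha,m,\xi-\beta_\alpha\big)\Big|\le\sum_{\substack{\alpha\in\mathcal{F}_D\\A/2\le|N(\alpha)|\le 2A}}|\lambda_f(\alpha)|\sum_{\xi\in\mathbb{Z}}\Big|\widehat{H_A}\big(\alpha,m,\xi-\beta_\alpha\big)\Big|,
\]
the restriction on $|N(\alpha)|$ being the vanishing statement in Corollary \ref{fourier bounds}; the right-hand side converges absolutely since the $\alpha$-sum is finite and, by Corollary \ref{fourier bounds} with $j=2$, the $\xi$-sum converges. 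Because $\alpha\in\mathcal{F}_D$ is totally positive we have $\alpha^*\le\alpha<\varepsilon_D^2\alpha^*$, hence $\beta_\alpha\in[0,1)$; consequently for $T$ large at most one integer $\xi$, necessarily $\xi\in\{0,1\}$, satisfies $|\xi-\beta_\alpha|\le T^{-1+\delta}$, and unwinding the definitions shows that $|\xi-\beta_\alpha|\le T^{-1+\delta}$ is equivalent to $\alpha\in S_{2\xi,2T^{-1+\delta}}$.

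For the ``near'' frequency (when it exists) I would use the trivial bound $|\widehat{H_A}(\alpha,m,\nu)|\le\int|H_A(\alpha,m,x)|\,dx$, which by the support and sup bounds of Proposition \ref{derivatives H} (case $j=0$) is $\ll_{\varepsilon,D}t^\varepsilon T(tTA)^{-1/2}$; summing this over the $\alpha$ possessing a near frequency — all of which lie in $S_{0,2T^{-1+\delta}}\cup S_{2,2T^{-1+\delta}}$ — contributes at most $t^\varepsilon T(TtA)^{-1/2}\sum_{\xi=-1}^{2}\sum_{\alpha\in S_{2\xi,2T^{-1+\delta}}}|\lambda_f(\alpha)|$, i.e.\ the first term of (\ref{eq: poisson summation result}). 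For every remaining frequency one has $|\xi-\beta_\alpha|>T^{-1+\delta}$, and for each fixed $\alpha$ the quantities $|\xi-\beta_\alpha|$ in this range are $>T^{-1+\delta}$ and otherwise spaced by $1$, so Corollary \ref{fourier bounds} with a fixed $j\ge 2$ gives
\[
\sum_{\substack{\xi\in\mathbb{Z}\\|\xi-\beta_\alpha|>T^{-1+\delta}}}\Big|\widehat{H_A}\big(\alpha,m,\xi-\beta_\alpha\big)\Big|\ll_{\varepsilon,j,D}t^\varepsilon T(tTA)^{-1/2}\big(T^{-\delta j}+T^{-j}\big)\ll_{\varepsilon,j,D}t^\varepsilon T^{1-\delta j}(tTA)^{-1/2}.
\]
Summing over $\alpha$ with $A/2\le|N(\alpha)|\le 2A$ and choosing $j=\lceil 2/\delta\rceil$, so that $1-\delta j-\tfrac12\le-\tfrac32$, produces the second term of (\ref{eq: poisson summation result}), with an implied constant depending only on $\varepsilon$, $\delta$ and $D$.

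The only real care is exponent bookkeeping: checking that $t^\varepsilon T(tTA)^{-1/2}=t^\varepsilon T^{1/2}(tA)^{-1/2}$ has the shape of the first claimed term, and that the choice of $j$ forces the ``far'' contribution down to $t^\varepsilon T^{-3/2}(tA)^{-1/2}$ times the full $\lambda_f$-sum over $A/2\le|N(\alpha)|\le 2A$. I expect no genuine obstacle here — it is a standard truncated-Poisson estimate — though one should be mildly attentive to the harmless over-inclusion of $\xi\in\{-1,2\}$ (for which $S_{-2,\cdot}$ and $S_{4,\cdot}$ are empty since $\alpha\in\mathcal{F}_D$ is totally positive) and to the degenerate case $\beta_\alpha=0$, which occurs only for $\alpha\in\mathbb{Z}$ and then falls into the near regime with $\xi=0$.
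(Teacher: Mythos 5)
Your argument is correct and follows essentially the same route as the paper's proof: a near/far dichotomy on the frequencies $\xi$ relative to $\beta_\alpha$ at scale $T^{-1+\delta}$, the trivial (support times sup) bound for the near frequencies together with the equivalence $|\xi-\beta_\alpha|\le T^{-1+\delta}\iff\alpha\in S_{2\xi,2T^{-1+\delta}}$, and Corollary \ref{fourier bounds} with $j=\lceil 2/\delta\rceil$ for the far ones. The only cosmetic difference is that the paper first disposes of $\xi\ge 3$ and $\xi\le -2$ with $j=2$ before running the dichotomy on the remaining four frequencies, whereas you treat all far frequencies uniformly with the single large $j$; the exponents come out identically.
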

\begin{proof}
By definition, for any $\alpha \in \mathcal{F}_{D}$ we have
$
0 \leq \frac{\log(\frac{\alpha}{\alpha^{*}})}{\log(\varepsilon_{D})} < 2 
$.
Therefore, by Corollary \ref{fourier bounds} we have
\begin{align*}
   \sum_{\alpha \in \mathcal{F}_{D}} \lambda_{f}(\alpha) \sum_{\xi \geq 3} \widehat{H_{A}}\Big(\alpha,m,\xi-\frac{ \log(\frac{\alpha}{\alpha^{*}})}{2\log(\varepsilon_{D})}\Big)  &\ll_{\varepsilon,D} t^{\varepsilon}T^{-1}(tTA)^{-\frac{1}{2}} \sum_{\substack{\alpha \in \mathcal{F}_{D} \\ \frac{A}{2}\leq |N(\alpha)| \leq 2A}} |\lambda_{f}(\alpha)|\sum_{\xi \geq 3} \frac{1}{\Big|\xi-\frac{ \log(\frac{\alpha}{\alpha^{*}})}{2\log(\varepsilon_{D})}\Big|^{2}} \\
   &\ll_{\varepsilon,D} t^{\varepsilon}T^{-1}(tTA)^{-\frac{1}{2}} \sum_{\substack{\alpha \in \mathcal{F}_{D} \\ \frac{A}{2}\leq |N(\alpha)| \leq 2A}} |\lambda_{f}(\alpha)|.
\end{align*}
Similarly, 
\begin{align*}
    \sum_{\alpha \in \mathcal{F}_{D}} \lambda_{f}(\alpha) \sum_{\xi \leq -2 } \widehat{H_{A}}\Big(\alpha,m,\xi-\frac{ \log(\frac{\alpha}{\alpha^{*}})}{2\log(\varepsilon_{D})}\Big)  &\ll t^{\varepsilon}T^{-1}(tTA)^{-\frac{1}{2}}  \sum_{\substack{\alpha \in \mathcal{F}_{D} \\ \frac{A}{2}\leq |N(\alpha)| \leq 2A}} |\lambda_{f}(\alpha)| .
\end{align*}
Let now $\xi \in \{-1,0,1,2\}$ and $\alpha \in \mathcal{F}_{D}$.  One easily sees that $|\xi - \frac{ \log(\frac{\alpha}{\alpha^{*}})}{2\log(\varepsilon_{D})}|\leq T^{-1+\delta}$ if and only if
$
\alpha \in S_{2\xi, 2T^{-1+\delta}}
$
in which case we use the estimate
\[
  \widehat{H_{A}}\Big(\alpha,m,\xi-\frac{ \log(\frac{\alpha}{\alpha^{*}})}{2\log(\varepsilon_{D})}\Big) \ll_{\varepsilon,D} t^{\varepsilon}T(tTA)^{-\frac{1}{2}}
\]
from Corollary \ref{fourier bounds}.
On the other hand, if $\alpha \in \mathcal{F}_{D}$ satisfies $|\xi - \frac{ \log(\frac{\alpha}{\alpha^{*}})}{2\log(\varepsilon_{D})}|> T^{-1+\delta}$, then by Corollary \ref{fourier bounds} we have 
\begin{align*}
    \widehat{H_{A}}\Big(\alpha,m,\xi-\frac{ \log(\frac{\alpha}{\alpha^{*}})}{2\log(\varepsilon_{D})}\Big) \ll_{\varepsilon,\delta,D} \frac{t^{\varepsilon}T^{1-\delta j}}{\Big|T^{1-\delta}\Big(\xi - \frac{ \log(\frac{\alpha}{\alpha^{*}})}{2\log(\varepsilon_{D})}\Big)\Big|^{j}} (tTA)^{-\frac{1}{2}} \ll_{\varepsilon,\delta,D} t^{\varepsilon}T^{-1} (tTA)^{-\frac{1}{2}}
\end{align*}
for $j= \lceil \frac{2}{\delta} \rceil$.
Combining everything and rearranging the order of summation, we get the expected estimate.
\end{proof}

\noindent Notice that the right hand side of (\ref{eq: poisson summation result}) does not depend on $m$, so we can estimate the sum over $m$ separately using the well known-bounds for the harmonic sum.
As a result, if $\lambda_{f}(n) \ll_{\varepsilon} n^{\theta+\varepsilon}$ is any (uniform) bound towards the Ramanujan conjecture, we have
\begin{equation}
\label{eq: after poisson equation}
T(TtA)^{-\frac{1}{2}}   \sum_{\xi =-1}^{2} \sum_{\alpha \in S_{2\xi,2T^{-1+\delta}}} |\lambda_{f}(\alpha)|\sum_{m= 1}^{(Tt)^{1+\delta}} \frac{1}{m} \ll_{\varepsilon,\delta, D} t^{\varepsilon} A^{\theta +\varepsilon}T(TtA)^{-\frac{1}{2}} \Big| S_{2\xi,2T^{-1+\delta}} \Big|
\end{equation}
and
\begin{equation}
\label{eq: after poisson equation 2}
T^{-1}(TtA)^{-\frac{1}{2}}  \sum_{\substack{\alpha \in \mathcal{F}_{D} \\ \frac{A}{2}\leq |N(\alpha)| \leq 2A}} |\lambda_{f}(\alpha)|\sum_{m= 1}^{(Tt)^{1+\delta}} \frac{1}{m} \ll_{\varepsilon,\delta, D} t^{\varepsilon} A^{\theta +\varepsilon}T^{-1}(TtA)^{-\frac{1}{2}} \sum_{\substack{\alpha \in \mathcal{F}_{D} \\ \frac{A}{2}\leq |N(\alpha)| \leq 2A}} 1.
\end{equation}
But for any $n\geq 1$, one can easily show that $\sum_{\mathbb{N}(\mathfrak{a})=n } 1 \ll_{\varepsilon} n^{\varepsilon}$ where the sum is taken over all ideals $\mathfrak{a}$ of $\mathcal{O}_{K}$ with norm equal to $n$. For this reason, (\ref{eq: after poisson equation 2}) leads to
\[
T^{-1}(TtA)^{-\frac{1}{2}}  \sum_{\substack{\alpha \in \mathcal{F}_{D} \\ \frac{A}{2}\leq |N(\alpha)| \leq 2A}} |\lambda_{f}(\alpha)|\sum_{m= 1}^{(Tt)^{1+\delta}} \frac{1}{m} \ll_{\varepsilon,\delta, D}  t^{2(\theta + \varepsilon)}T^{-1}.
\]
\noindent We conclude the proof by trivially estimating the cardinality of the sets $S_{\xi,R}$ for any $\xi \in \{-1,0,1,2\}$ and $0< R \leq 2$.

\subsection{Trivial estimates}

\noindent To estimate $|S_{\xi,R}|$, we will first reduce our problem to counting lattice points in a parallelogram then apply the Lipschitz principle. 
We start by introducing the parallelogram
\[
\mathcal{P}_{\xi,R} \coloneqq \Big\{ a(\varepsilon_{D} \sqrt{2A}, \varepsilon_{D}^{1-\xi-R}\sqrt{2A}) + b(0,2\varepsilon_{D}^{5}R\sqrt{2A}) \mid a,b \in [0,1] \Big \}.
\]
To simplify the notation, for $\xi$ and $R$ fixed we define
\[
v_{1} = (\varepsilon_{D} \sqrt{2A}, \varepsilon_{D}^{1-\xi-R}\sqrt{2A}) \text{ and } v_{2} = (0,2\varepsilon_{D}^{5}R\sqrt{2A}).
\]
\begin{lemma}
\label{parallelogram lattice points}
For $\xi \geq -1$ and $0<R\leq 2$ we have 
\[
S_{\xi,R} \subseteq \mathcal{P}_{\xi,R} \cap L_{D}
\]
where $L_{D}$ is the lattice defined in Proposition \ref{fundamental domain}.
\end{lemma}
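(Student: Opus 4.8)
Since $S_{\xi,R}\subseteq\mathcal{F}_{D}=C_{D}\cap L_{D}\subseteq L_{D}$ by construction, the lattice membership comes for free and the whole content of the lemma is the inclusion $S_{\xi,R}\subseteq\mathcal{P}_{\xi,R}$. The plan is therefore to fix a point $(x,y)=(\alpha,\alpha^{*})\in S_{\xi,R}$ and exhibit coefficients $a,b\in[0,1]$ with $(x,y)=av_{1}+bv_{2}$. From $(x,y)\in C_{D}$ and the definition of $S_{\xi,R}$ such a point satisfies
\[
x,y>0,\qquad y\leq x<\varepsilon_{D}^{2}y,\qquad \varepsilon_{D}^{\xi-R}y\leq x\leq\varepsilon_{D}^{\xi+R}y,\qquad \tfrac{A}{2}\leq xy\leq 2A,
\]
the last because $\alpha$ is totally positive, so $N(\alpha)=\alpha\alpha^{*}=xy>0$ and $|N(\alpha)|=xy$. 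Comparing first coordinates in $(x,y)=av_{1}+bv_{2}$ forces $a=x/(\varepsilon_{D}\sqrt{2A})$, and then comparing second coordinates (using $a\,\varepsilon_{D}^{1-\xi-R}\sqrt{2A}=x\,\varepsilon_{D}^{-\xi-R}$) forces
\[
b=\frac{y-x\,\varepsilon_{D}^{-\xi-R}}{2\,\varepsilon_{D}^{5}R\sqrt{2A}},
\]
so it remains only to check $a\in[0,1]$ and $b\in[0,1]$.

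For $a$ the positivity is clear, while multiplying $x<\varepsilon_{D}^{2}y$ by $x>0$ and using $xy\leq 2A$ gives $x^{2}<\varepsilon_{D}^{2}xy\leq 2\varepsilon_{D}^{2}A$, hence $x<\varepsilon_{D}\sqrt{2A}$ and $a<1$. For $b$ the lower bound $b\geq0$ is precisely the inequality $x\leq\varepsilon_{D}^{\xi+R}y$ from $S_{\xi,R}$. For the upper bound I would use the other half, $\varepsilon_{D}^{\xi-R}y\leq x$, i.e.\ $y\leq\varepsilon_{D}^{R-\xi}x$, to write
\[
0\leq y-x\,\varepsilon_{D}^{-\xi-R}\leq x\,\varepsilon_{D}^{-\xi}\bigl(\varepsilon_{D}^{R}-\varepsilon_{D}^{-R}\bigr),
\]
and then bound $\varepsilon_{D}^{R}-\varepsilon_{D}^{-R}=2\sinh(R\log\varepsilon_{D})$ by the elementary inequality $\tanh u\leq u$ (equivalently $\sinh u\leq u\cosh u$) together with $R\leq2$, which yields $\varepsilon_{D}^{R}-\varepsilon_{D}^{-R}\leq2R\log(\varepsilon_{D})\,\varepsilon_{D}^{2}$. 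Feeding in $\varepsilon_{D}^{-\xi}\leq\varepsilon_{D}$ (this is where $\xi\geq-1$ enters) and the bound $x<\varepsilon_{D}\sqrt{2A}$ from the previous step gives
\[
0\leq y-x\,\varepsilon_{D}^{-\xi-R}\leq 2R\,\varepsilon_{D}^{4}\log(\varepsilon_{D})\sqrt{2A}\leq 2R\,\varepsilon_{D}^{5}\sqrt{2A},
\]
the last step because $\log\varepsilon_{D}<\varepsilon_{D}$; dividing by $2\varepsilon_{D}^{5}R\sqrt{2A}$ gives $b\leq\log(\varepsilon_{D})/\varepsilon_{D}<1$, which finishes the argument.

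The only step with any substance is the bound $b\leq1$: it requires combining the small-$R$ estimate $\varepsilon_{D}^{R}-\varepsilon_{D}^{-R}\ll R$ — for which a $\sinh/\tanh$ convexity inequality is the clean tool — with the a priori bound $x<\varepsilon_{D}\sqrt{2A}$ extracted from the cone and norm conditions defining $\mathcal{F}_{D}$. The various powers of $\varepsilon_{D}$ built into $\mathcal{P}_{\xi,R}$ (notably the $\varepsilon_{D}^{5}$ in $v_{2}$) are calibrated precisely so that this chain of inequalities closes, with $\log\varepsilon_{D}<\varepsilon_{D}$ providing the last bit of slack. Everything else — the explicit formulas for $a$ and $b$, the bound $a\leq1$, and both lower bounds — is immediate from the defining inequalities.
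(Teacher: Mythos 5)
Your proof is correct and follows essentially the same route as the paper: both reduce membership in $\mathcal{P}_{\xi,R}$ to the explicit coordinate inequalities $0\leq x\leq\varepsilon_{D}\sqrt{2A}$ and $\varepsilon_{D}^{-\xi-R}x\leq y\leq\varepsilon_{D}^{-\xi-R}x+2\varepsilon_{D}^{5}R\sqrt{2A}$, obtain the bound on $x$ from the cone and norm conditions, and control $\varepsilon_{D}^{R}-\varepsilon_{D}^{-R}$ linearly in $R$ (the paper via the mean value theorem for $a\mapsto\varepsilon_{D}^{a}$, you via $\sinh u\leq u\cosh u$ — the same first-order estimate). The only difference is cosmetic.
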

\begin{proof}
Since $S_{\xi,R}$ is a subset of the fundamental domain $\mathcal{F}_{D}$ which itself is a subset of $L_{D}$, it is enough to prove that 
$S_{\xi,R} \subseteq \mathcal{P}_{\xi,R}.$
\newline 
\newline 
Given any vector $(x,y) \in \mathbb{R}^{2}$, we can write
\[
(x,y) = \frac{x}{\varepsilon_{D}\sqrt{2A}}v_{1} + \frac{y-\varepsilon_{D}^{-\xi-R}x}{2\varepsilon_{D}^{5}R\sqrt{2A}} v_{2}
\]
so that
\begin{equation}
\label{eq: parallelogram conditions}
    (x,y) \in \mathcal{P}_{\xi,R} \iff \begin{cases}
    0\leq x \leq \varepsilon_{D}\sqrt{2A} \\ \varepsilon_{D}^{-\xi-R}x \leq y \leq \varepsilon_{D}^{-\xi-R}x  + 2\varepsilon_{D}^{5}R\sqrt{2A}.
    \end{cases}
\end{equation}
Hence, for any $ (x,y) \in S_{\xi,R}$, it is enough to prove that $x$ and $y$ satisfy the inequalities above. By definition of $\mathcal{F}_{D}$, we know that $\varepsilon_{D}^{-2} x \leq y \leq x$. The inequality 
$
0  \leq x \leq \varepsilon_{D} \sqrt{2A} 
$ now follows easily from $N(x) = xy$.
\newline
\newline 
On the other hand, the condition $\varepsilon_{D}^{\xi- R} y \leq x \leq \varepsilon_{D}^{\xi +R} y$ from the definition of $S_{\xi,R}$ is equivalent to
$\varepsilon_{D}^{-\xi-R} x \leq y \leq \varepsilon_{D}^{-\xi +R}x.$
By the mean value theorem applied to the function $a \mapsto \varepsilon_{D}^{a}$, we have
\begin{align*}
    \varepsilon_{D}^{-\xi+R}x &= \varepsilon_{D}^{-\xi-R}x + (\varepsilon_{D}^{-\xi +R} - \varepsilon_{D}^{-\xi -R})x \leq \varepsilon_{D}^{-\xi-R}x + 2\varepsilon_{D}^{2-\xi+R}R \sqrt{2A}.
\end{align*}
Since we assumed $-1\leq \xi$ and $R\leq 2$, we must have $\varepsilon_{D}^{-\xi+R} \leq \varepsilon_{D}^{3}$, which concludes the proof.
\end{proof}

\noindent As a result, we can apply Corollary \ref{lattice points parallelogram bounds} using the basis $\mathcal{B} = \{(1,1), (\beta_{D},\beta_{D}^{*})\}$ of $L_{D}$ to show that for any $\xi \geq -1$ and $0< R\leq 2$ we have
\[
|S_{\xi,R}| \ll_{D} RA+ \sqrt{A}.
\]

\noindent Hence, for any $\varepsilon>0$, $T\ll_{D} t$ and $A \ll_{\varepsilon} (Tt)^{1+\varepsilon}$, (\ref{eq: after poisson equation}) becomes 
\begin{align*}
    T(TtA)^{-\frac{1}{2}}   \sum_{\xi =-1}^{2} \sum_{\alpha \in S_{2\xi,2T^{-1+\varepsilon}}} \lambda_{f}(\alpha)\sum_{m= 1}^{(Tt)^{1+\varepsilon}} \frac{1}{m} &\ll_{D,\varepsilon} t^{3\varepsilon} A^{\theta} T(TtA)^{-\frac{1}{2}} (T^{-1+\varepsilon}A + A^{\frac{1}{2}}) \ll_{D,\varepsilon} t^{5\varepsilon} t^{2\theta}.
\end{align*}
This shows that, as expected, for any $\varepsilon>0$, we have
\[
||f|_{\mathcal{C}_{D}}||_{2}^{2} \ll_{D,\varepsilon} t^{2(\theta+\varepsilon)}
\]
where $\lambda_{f}(n) \ll_{\varepsilon} n^{\theta+\varepsilon}$ is any bound towards the Ramanujan conjecture.

%
%
%


\bibliographystyle{plainnat}
\bibliography{bibliography}

\end{document}